\documentclass[12pt]{article}
\usepackage{amscd, amssymb, amsmath, calc, enumerate}
\begin{document} 
\renewcommand{\thesubsection}{\arabic{subsection}}
\newenvironment{eq}{\begin{equation}}{\end{equation}}
\newenvironment{proof}{{\bf Proof}:}{\vskip 5mm }
\newenvironment{rem}{{\bf Remark}:}{\vskip 5mm }
\newenvironment{remarks}{{\bf Remarks}:\begin{enumerate}}{\end{enumerate}}
\newenvironment{examples}{{\bf Examples}:\begin{enumerate}}{\end{enumerate}}  
\newtheorem{proposition}{Proposition}[subsection]
\newtheorem{lemma}[proposition]{Lemma}
\newtheorem{definition}[proposition]{Definition}
\newtheorem{theorem}[proposition]{Theorem}
\newtheorem{cor}[proposition]{Corollary}
\newtheorem{conjecture}{Conjecture}
\newtheorem{pretheorem}[proposition]{Pretheorem}
\newtheorem{hypothesis}[proposition]{Hypothesis}
\newtheorem{example}[proposition]{Example}
\newtheorem{remark}[proposition]{Remark}
\newtheorem{ex}[proposition]{Exercise}
\newtheorem{cond}[proposition]{Conditions}
\newtheorem{cons}[proposition]{Construction}
\newcommand{\llabel}[1]{\label{#1}}
\newcommand{\comment}[1]{}
\newcommand{\sr}{\rightarrow}
\newcommand{\lr}{\longrightarrow}
\newcommand{\xr}{\xrightarrow}
\newcommand{\dw}{\downarrow}
\newcommand{\bdl}{\bar{\Delta}}
\newcommand{\zz}{{\bf Z\rm}}
\newcommand{\zq}{{\bf Z}_{qfh}}
\newcommand{\nn}{{\bf N\rm}}
\newcommand{\qq}{{\bf Q\rm}}
\newcommand{\nq}{{\bf N}_{qfh}}
\newcommand{\oo}{\otimes}
\newcommand{\uu}{\underline}
\newcommand{\ih}{\uu{Hom}}
\newcommand{\af}{{\bf A}^1}
\newcommand{\wt}{\widetilde}
\newcommand{\gm}{{\bf G}_m}
\newcommand{\dsr}{\stackrel{\sr}{\scriptstyle\sr}}
\newcommand{\PP}{$P_{\infty}$}
\newcommand{\tp}{\tilde{D}}
\newcommand{\HH}{$H_{\infty}$}
\newcommand{\ii}{\stackrel{\scriptstyle\sim}{\sr}}
\newcommand{\BB}{_{\bullet}}
\newcommand{\D}{\Delta}
\newcommand{\colim}{{\rm co}\hspace{-1mm}\lim}
\newcommand{\cf}{{\it cf} }
\newcommand{\msf}{\mathsf }
\newcommand{\mcal}{\mathcal }
\newcommand{\ep}{\epsilon}
\newcommand{\tl}{\widetilde}
\newcommand{\ub}{\mbox{\rotatebox{90}{$\in$}}}
\newcommand{\piece}{\vskip 3mm\noindent\refstepcounter{proposition}{\bf
\theproposition}\hspace{2mm}}
\newcommand{\subpiece}{\vskip 3mm\noindent\refstepcounter{equation}{\bf\theequation}\hspace{2mm}}{\vskip
3mm}
\numberwithin{equation}{subsection}
%
%
\begin{center}
{\Large\bf Motives over simplicial schemes}\\
\vskip 4mm
{\large Vladimir Voevodsky}\\
{\em Preliminary version, June 16 2003}
\end{center}
\vskip 4mm
\tableofcontents
\subsection{Introduction}
This paper was written as a part of \cite{zl} and is intended
primarily to provide the definitions and results about
motives over simplicial schemes used in the proof of the Bloch-Kato
conjecture. 

For the purpose of this paper {\em a scheme} means a disjoint union of
possibly infinitely many separated noetherian schemes of finite
dimension. A smooth scheme over a scheme $S$ is a disjoint union of
smooth separated schemes of finite type over $S$. A smooth simplicial
scheme $\cal X$ over $S$ is a simplicial scheme such that all terms of
$\cal X$ are smooth schemes over $S$ and all morphisms are over $S$.

If $\cal X$ is a smooth simplicial scheme over a field $k$ then the
complex of presheaves with transfers defined by the simplicial
presheaf with transfers $\zz_{tr}({\cal X})$ gives an object
$M({\cal X})$ in the triangulated category of motives
$DM_{-}^{eff}(k)$ over $k$. The motivic cohomology of this object are
called the motivic cohomology of $\cal X$ and we denote these groups
by
$$H^{p,q}({\cal X},A):=Hom_{DM}(M({\cal X}),A(q)[p])$$
where $A$ is an abelian group of coefficients. 

The main goal of this paper is to define for any smooth simplicial
scheme ${\cal X}$ over a perfect field $k$ a tensor triangulated
category $DM^{eff}_{-}({\cal X})$ such that
\begin{eq}
\llabel{formulaneed}
H^{p,q}({\cal X},A)=Hom_{DM^{eff}({\cal X})}(\zz,A(p)[q]).
\end{eq}
For completeness we give our construction of $DM^{eff}_{-}({\cal X})$
in the case of a general simplicial scheme and in particular we
provide a definition for ``motivic cohomology'' of simplicial schemes
based on (\ref{formulaneed}). If the terms of $\cal X$ are not regular
there are examples which show that the motivic cohomology defined by
(\ref{formulaneed}) do not satisfy the suspension isomorphism with
respect to the $T$-suspension (which implies that they do not satisfy
the projective bundle formula and do not have the Gysin long exact
sequence). Therefore in the general case we have to distinguish the
``effective'' motivic cohomology groups given by (\ref{formulaneed})
and the stable motivic cohomology groups given by
\begin{eq}
\llabel{stablegroups}
H^{p,q}_{stable}({\cal X},A):=\lim_{n} Hom_{DM^{eff}({\cal
X})}(\zz(n),A(n+q)[p])
\end{eq}
The stable motivic cohomology groups should also have a descrption as
morphisms bewteen the Tate objects in the properly defined $T$-stable
version of $DM$ and should have  many good properties including the
long exact sequence for blow-ups which the unstable groups in the
non-regular case do not have. 

If the terms of $\cal X$ are regular schemes of equal characteristic
the cancellation theorem over perfect fields implies that this problem
does not arise and the stable groups are same as the effective ones
(see Corollary \ref{cancell}). Since in applications to the Bloch-Kato
conjecture we need only the case of smooth schemes over a perfect
field we do not consider stable motivic cohomology in this paper.

Note also that while we use schemes smooth over a base as the basic
building blocks of motives over this base one can also consider all
(separated) schemes instead as it is done in \cite{cancellation} and
\cite{zslice}. As far as the constructions of this paper are concerned this
make no difference except that the resulting motivic category gets
bigger.
%

\subsection{Presheaves with transfers}
\llabel{ss1}
Let ${\cal X}$ be a simplicial scheme with terms $X_i$, $i\ge 0$. For a
morphism $\phi:[j]\sr [i]$ in $\Delta$ we let ${\cal X}_{\phi}$ denote the
corresponding morphism ${\cal X}_i\sr {\cal X}_j$. Denote by $Sm/{\cal X}$ the category
defined as follows:
\begin{enumerate}
\item objects of $Sm/{\cal X}$ are pairs of the form $(Y,i)$
where $i$ is a non-negative integer and $Y\sr {\cal X}_i$ is a smooth
scheme over ${\cal X}_i$
\item a morphism from $(Y,i)$ to $(Z,j)$ is a pair
$(u,\phi)$ where $\phi:[j]\sr [i]$ is a morphism in $\Delta$ and
$u:Y\sr Z$ is a morphism of schemes such that the square
\begin{eq}
\llabel{starptwo}
\begin{CD}
Y @>u>> Z\\
@VVV @VVV\\
{\cal X}_i @>{\cal X}_{\phi}>> {\cal X}_j
\end{CD}
\end{eq}
commutes. 
\end{enumerate}
A presheaf of sets on $Sm/{\cal X}$ is a contravariant functor from
$Sm/{\cal X}$ to sets. Each presheaf $F$ on $Sm/{\cal X}$ defines in the
obvious way a famlily of presheaves $F_i$ on $Sm/{\cal X}_i$ together with
natural transformations $F_{\phi}:{\cal X}_{\phi}^*(F_j)\sr F_i$ given for
all morphisms $\phi:[j]\sr [i]$ in $\Delta$. 

One can easily see that this construction provides a bijection between
presheaves on $Sm/{\cal X}$ and families $(F_i,F_{\phi})$ such that
$F_{Id}=Id$ and the obvious compatibility condition holds for
composable pairs of morphisms in $\Delta$. Under this bijection the
presheaf $h_{(Y,i)}$ represented by $(Y,i)$ has as its $j$-th
component the presheaf 
$$(h_{(Y,i)})_j=\coprod_\phi h_{Y\times_{\phi} {\cal X}_j}$$
where $\phi$ runs through the morphisms $[i]\sr [j]$ in $\Delta$.

Our first goal is to develop an analog of this picture where the
presheaves of sets are replaced with presheaves with transfers. Let us
recall first the basic notions for the presheaves with transfers over
usual schemes. For a scheme $X$ denote by $SmCor(X)$ the category
whose objects are smooth schemes over $X$ and morphisms are finite
correspondences over $X$ (in the case of a non-smooth $\cal X$ see
\cite{cancellation} for a detailed definition of finite
correspondences and their compositions). Note that we allow schemes
which are infinite disjoint unions of smooth schemes of finite type to
be objects of $SmCor(X)$. In particular our $SmCor(X)$ has infinite
direct sums. A presheaf with transfers on $Sm/X$ is an additive
contravariant functor from $SmCor(X)$ to abelian groups which takes
infinite direct sums to products. Presheaves with transfers form an
abelian category $PST(X)$. The forgetful functor from presheaves with
transfers to presheaves of sets has a left adjoint which we denote by
$\zz_{tr}(-)$. If $Y$ is a smooth scheme over $X$ and $h_{Y}$ is the
presheaf of sets represented by $Y$ then $\zz_{tr}(h_Y)$ coincides
with the presheaf with transfers represented by $Y$ on $SmCor(X)$ and
we denote this object by $\zz_{tr}(Y)$. It will be convenien for us to
identify $SmCor(X)$ with its image in $PST(X)$ and denote the object
of $SmCor(X)$ corresponding to a smooth scheme $Y$ over $X$ by
$\zz_{tr}(Y)$.

A morphism of schemes $f:Y\sr X$ defines the pull-back functor
$$\zz_{tr}(U)\mapsto \zz_{tr}(U\times_X Y)$$
from $SmCor(X)$ to $SmCor(Y)$ and therefore a pair of adjoint functors
$f_*, f^*$ between the corresponding categories of presheaves with
transfers. Since $f_*$ commutes with the forgetful functor we conclude
by adjunction that for a presheaf of sets $F$ over $X$ one has
\begin{eq}
\llabel{com1}
\zz_{tr}(f^*(F))=f^*(\zz_{tr}(F)).
\end{eq}
Note that it is not necessarily true that the pull-back functors on
the presheaves of sets and the presheaves with transfers commute with
the forgetful functor.
\begin{definition}
\llabel{pt}
Let ${\cal X}$ be a simplicial scheme. A presheaf with transfers on
${\cal X}$ is the following collection of data:
\begin{enumerate}
\item For each $i\ge 0$ a presheaf with transfers $F_i$ on
$Sm/{\cal X}_i$ 
\item For each morphism $\phi:[j]\sr [i]$ in the simplicial category $\Delta$
a morphism of presheaves with transfers
$$F_{\phi}:{\cal X}_{\phi}^*(F_j)\sr F_i$$
\end{enumerate}
These data should satisfy the condition that $F_{id}=Id$ and for a
composable pair of morphisms $\phi:[j]\sr [i]$, $\psi:[k]\sr [j]$ in
$\Delta$ the obvious diagram of morphisms of presheaves commutes.
\end{definition}
We let $PST({\cal X})$ denote the category of presheaves with
transfers on ${\cal X}$. This is an abelian category with kernels
and cokernels computed termise.
\begin{example}\rm
Let $X$ be a scheme and ${\cal X}$ be such that ${\cal X}_i=X$ for all $i$ and
all the structure morphisms are identities. Then a presheaf with
transfers over ${\cal X}$ is the same as a cosimplicial object in the
category of presheaves with transfers over $X$.
\end{example}
Let $F=(F_i,F_{\phi})$ be a presheaf of sets on $Sm/{\cal X}$. In view
of (\ref{com1}), the collection of presheaves with transfers
$\zz_{tr}(F_i)$ has a natural structure of a presheaf with transfers
on $Sm/{\cal X}$ which we denote by $\zz_{tr}(F)$. One observes easily
that $F\mapsto \zz_{tr}(F)$ is the left adjoint to the corresponding
forgetful functor. If $(Y,i)$ is an object of $Sm/{\cal X}$ and
$h_{(Y,i)}$ is the corresponding representable presheaf of sets we let
$\zz_{tr}(Y,i)$ denote the presheaf with transfers
$\zz_{tr}(Y,i)$. For any presheaf with transfers $F$ we have
\begin{eq}
\llabel{equal1}
Hom(\zz_{tr}(Y,i),F)=F_i(Y).
\end{eq}
By construction, the $i$-th component of $\zz_{tr}(Z,j)$ is 
\begin{eq}
\llabel{need6}
\zz_{tr}((h_{(Z,j)})_i)=\zz_{tr}(\coprod_\phi h_{Z\times_{\phi}
{\cal X}_i})
=\oplus_\phi \zz_{tr}(Z\times_{\phi}
{\cal X}_i)
\end{eq}
where $\phi$ runs through all morphisms $[j]\sr [i]$ in
$\Delta$. Together with (\ref{equal1}) this shows that
$$Hom(\zz_{tr}(Y,i),\zz_{tr}(Z,j))=\oplus_{\phi} Hom_{SmCor({\cal
X}_i)}(Y, Z\times_{\phi} {\cal X}_i)$$

Denote by $SmCor({\cal X})$ the full subcategory in $PST({\cal X})$
generated by direct sums of objects of the form $\zz_{tr}(Y_i)$. The
following lemma is an immediate corollary of 
(\ref{equal1}).
\begin{lemma}
\llabel{ident} The category $PST({\cal X})$ is naturally equivalent to
the category of additive contravariant functors from $SmCor({\cal X})$
to the category of abelian groups which commute with $\oplus$.
\end{lemma}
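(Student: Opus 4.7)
The plan is to write the equivalence as a Yoneda-style restriction functor. Define
$$R: PST({\cal X})\lr \mbox{Fun}^{\oplus}(SmCor({\cal X})^{op},Ab),\qquad F\mapsto Hom_{PST({\cal X})}(-,F).$$
By (\ref{equal1}), $R(F)$ sends the generator $\zz_{tr}(Y,i)$ to $F_i(Y)$, and it is additive and converts the direct sums in $SmCor({\cal X})$ into products in $Ab$; since direct sums in $SmCor({\cal X})$ coincide with those in $PST({\cal X})$ (which in turn agree with products because presheaves with transfers take disjoint unions to products by definition), $R(F)$ genuinely commutes with $\oplus$, so $R$ is well-defined.

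Next I would construct an inverse functor $L$. Given $G\in \mbox{Fun}^{\oplus}(SmCor({\cal X})^{op},Ab)$, set $F_i(Y):=G(\zz_{tr}(Y,i))$. The full inclusion $SmCor({\cal X}_i)\hookrightarrow SmCor({\cal X})$ (sending $Y$ to $(Y,i)$ with the morphisms of the form $(u,Id)$) endows each $F_i$ with the structure of an additive contravariant functor on $SmCor({\cal X}_i)$ which, by the $\oplus$-preserving assumption, is a presheaf with transfers on ${\cal X}_i$. For a morphism $\phi:[j]\sr[i]$ in $\Delta$ and a smooth $Y\sr {\cal X}_i$, the pair $(Id_Y,\phi)$ is a morphism $(Y,i)\sr(Y,j)$ in $Sm/{\cal X}$ (with $Y$ regarded as smooth over ${\cal X}_j$ via ${\cal X}_\phi$), so $G$ applied to the resulting morphism $\zz_{tr}(Y,i)\sr\zz_{tr}(Y,j)$ yields a map $F_j(Y)\sr F_i(Y)$ which I take to be the component of $F_\phi$ at $Y$. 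Naturality in $Y\in SmCor({\cal X}_i)$ and the cocycle conditions $F_{Id}=Id$, $F_\psi\circ{\cal X}_\psi^*(F_\phi)=F_{\phi\circ\psi}$ follow from functoriality of $G$ applied to the corresponding compositions in $SmCor({\cal X})$.

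Finally, I verify that $R$ and $L$ are mutually inverse. That $L\circ R=Id$ is just (\ref{equal1}) together with the observation that the transition map $F_\phi$ reconstructed from $Hom(-,F)$ by the recipe above agrees with the original $F_\phi$: the morphism $\zz_{tr}(Y,i)\sr\zz_{tr}(Y,j)$ in $PST({\cal X})$ corresponds under (\ref{equal1}) precisely to the image in $F_i(Y)$ of the identity section of ${\cal X}_\phi^*(F_j)$ over $Y$, which by definition of ${\cal X}_\phi^*(F_j)$ is the component of $F_\phi$ at $Y$. In the opposite direction, $R\circ L=Id$ reduces, by the fact that both sides commute with $\oplus$, to checking agreement on the generators $\zz_{tr}(Y,i)$, where it is immediate from the construction.

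The content of the argument is purely formal, so there is no real obstacle; the only point requiring care is to match the morphisms of $SmCor({\cal X})$ computed in (\ref{need6}) with the transition data $F_\phi$ of Definition~\ref{pt}, which amounts to tracing through the adjunction defining $\zz_{tr}(-)$ together with the identification (\ref{com1}) of pull-back with the forgetful functor.
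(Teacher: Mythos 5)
Your argument is correct and is essentially the proof the paper has in mind: the paper declares the lemma an immediate corollary of (\ref{equal1}), and your restriction functor $R$ and reconstruction functor $L$ simply spell out that observation, using (\ref{need6}) to match morphisms between the generators $\zz_{tr}(Y,i)$ with the transition data $F_\phi$ of Definition \ref{pt}. One cosmetic slip: the functor $SmCor({\cal X}_i)\sr SmCor({\cal X})$, $Y\mapsto \zz_{tr}(Y,i)$, is faithful but not full (morphisms from $(Y,i)$ to $(Z,i)$ have components indexed by all $\phi:[i]\sr[i]$ in $\Delta$, not just the identity), but you only use its functoriality to restrict $G$, so nothing in the argument breaks.
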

Lemma \ref{ident} implies in particular that we can apply in the
context of $PST({\cal X})$ the usual construction of a canonical left
resolution of a functor by direct sums of representable functors. It
provides us with a functor $Lres$ from $PST({\cal X})$ to complexes over
$SmCor({\cal X})$ together with a familiy of natural quasi-isomorphisms 
$$Lres(F)\sr F$$
We let 
$$D({\cal X}):=D_{-}(PST({\cal X}))$$
denote be the derived category of complexes bounded from the above
over $PST({\cal X})$. In view of Lemma \ref{ident} it can be
identified with the homotopy category of complexes bounded from the
above over $SmCor({\cal X})$ by means of the functor
\begin{eq}
\llabel{lres}
K\mapsto Tot(Lres(K))
\end{eq}
which we also denote by $Lres$. 

For a morphism of simplicial schemes $f_{\bullet}:{\cal X}\sr {\cal
Y}$ the direct and inverse image functors $f_i^*, f_{i,*}$ define in
the obvious way functors
$$f_{\bullet}^*:PST({\cal Y})\sr PST({\cal X})$$
$$f_{\bullet,*}:PST({\cal X})\sr PST({\cal Y})$$
and the adjunction morphisms $Id\sr f_{i,*}f^*_i$, $f^*_if_{*,i}\sr Id$
define morphisms 
$$Id\sr f_{\bullet,*}f_{\bullet}^*$$
$$f_{\bullet,*}f_{\bullet}^*\sr Id$$
which automatically satisfy the adjunction axioms and therefore make
$f_{\bullet,*}$ into a right adjoint to $f^*_{\bullet}$. 

The functor $f^*\BB$ takes $\zz_{tr}(Z,i)$ to
$\zz_{tr}(Z\times_{{\cal Y}_i}{\cal X}_i,i)$ and commutes with direct
sums. Therefore it restricts to a functor 
$$f^{-1}_{\bullet}:SmCor({\cal Y})\sr SmCor({\cal X}).$$
Using the equivalence of Lemma \ref{ident} we can now recover the
functors $f^*\BB$ and $f_{*,\bullet}$ as the direct and inverse image functors
defined by $f^{-1}\BB$.

The functors $f_{\bullet,*}$ are clearly
exact and therefore define functors on the corresponding derived
categories. The functors $f_{\bullet}^*$ for non-smooth $f$ are in
general only right exact but not left exact. To define the
corresponding left adjoints one sets
$$Lf_{\bullet}^*(K)=f_{\bullet}^*(Lres(K))$$
where $Lres$ is defined on complexes by (\ref{lres}).  The
corresponding functor on the derived categories, which we continue to
denote by $Lf^*_{\bullet}$ is then a left adjoint to $f_{\bullet,*}$.

A group of functors relates the presheaves with transfers over
$\cal X$ with the presheaves with transfers over the terms of $\cal
X$. For any $i\ge 0$ let
$$r_i:SmCor({\cal X}_i)\sr SmCor({\cal X})$$
be the functor which takes a smooth scheme $Y$ over ${\cal X}_i$ to
$\zz_{tr}(Y,i)$. This functor defines in the usual way a pair of
adjoints 
$$r_{i,\#}:PST({\cal X}_i)\sr PST({\cal X})$$
and
$$r_i^*:PST({\cal X})\sr PST({\cal X}_i)$$
where $r_i^*$ is the right adjoint and $r_{i,\#}$ the left
adjoint. Equation (\ref{equal1}) implies that for a presheaf with
transfers $F$ on $\cal X$, $r_i^*(F)$ is the $i$-th component of
$F$. To compute $r_{i,\#}$ note that 
\begin{eq}
\llabel{foresh1}
r_{i,\#}(\zz_{tr}(Y))=\zz_{tr}(Y,i)
\end{eq}
and $r_{i,\#}$ is right exact. Therefore for a presheaf with transfers
$F$ over ${\cal X}_i$ one has
\begin{eq}
\llabel{foresh2}
r_{i,\#}(F)=h_0(r_{i,\#}(Lres(F)))
\end{eq}
where $Lres$ is the canonical left resolution by representable
presheaves with transfers and the right hand side of (\ref{foresh2})
is defined by (\ref{foresh1}). 

The functors $r_{i}^*$ are exact and therefore define functors
between the corresponding derived categories which we again denote by
$r_i^*$. We do not know if the functors $r_{i,\#}$ are exact but in any
event one can define the left derived functor
$$Lr_{i,\#}:=r_{i,\#}\circ Lres$$
This functor respects quasi-isomorphisms and the corresponding functor
between the derived categories which we continue to denote by
$Lr_{i,\#}$ is the left adjoint to $r_{i}^*$. 
\begin{lemma}
\llabel{isconserv}
The family of functors
$$r_i^*:D({\cal X})\sr D({\cal X}_i)$$
is conservative i.e. if $r_i^*(K)\cong 0$ for all $i$ then $K\cong 0$.
\end{lemma}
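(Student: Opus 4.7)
The plan is to reduce the lemma to a statement about cohomology presheaves and then use the fact that a presheaf with transfers on $\mathcal{X}$ is determined componentwise.

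First, I would observe that the functors $r_i^*$ are exact (stated in the excerpt) and that for any presheaf with transfers $F = (F_j, F_\phi)$ on $\mathcal{X}$ the formula $r_i^*(F) = F_i$ holds (also noted in the excerpt, via equation \ref{equal1}). Since $r_i^*$ is exact on $PST(\mathcal{X})$, it commutes with taking cohomology of complexes: for every $n$,
\begin{eq}
r_i^*(H^n(K)) = H^n(r_i^*(K)).
\end{eq}
Thus $r_i^*(K) \cong 0$ in $D(\mathcal{X}_i)$ forces $H^n(r_i^*(K)) = 0$ in $PST(\mathcal{X}_i)$ for every $n$ (an object of $D_-$ is zero iff all its cohomology presheaves vanish), and therefore $(H^n(K))_i = r_i^*(H^n(K)) = 0$ for every $i$ and every $n$.

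Next I would argue that the family $\{r_i^*\}$ is conservative already at the level of the abelian category $PST(\mathcal{X})$: a presheaf with transfers $F = (F_j, F_\phi)$ on $\mathcal{X}$, as defined in Definition \ref{pt}, is the zero object if and only if $F_j = 0$ for all $j$, since both the objects and the morphisms of $PST(\mathcal{X})$ are given by their component data (kernels and cokernels are computed termwise, as stated just after Definition \ref{pt}). Applying this to $F = H^n(K)$ gives $H^n(K) = 0$ in $PST(\mathcal{X})$ for every $n$.

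Finally, since $K$ lies in the derived category $D_-(PST(\mathcal{X}))$ of complexes bounded above and all of its cohomology objects vanish, $K$ is quasi-isomorphic to the zero complex, i.e.\ $K \cong 0$ in $D(\mathcal{X})$. There is no genuine obstacle here: the argument is purely formal, resting on the exactness of $r_i^*$ together with the componentwise description of $PST(\mathcal{X})$, and the only thing worth verifying carefully is that the structure of $PST(\mathcal{X})$ really does reduce the vanishing of a presheaf to the vanishing of each $F_j$, which is immediate from Definition \ref{pt}.
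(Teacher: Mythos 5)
Your argument is correct, but it takes a different route from the paper. The paper proves the lemma by adjunction: for any $(Y,i)$ and any $n$ it identifies $Hom(\zz_{tr}(Y,i),K[n])$ with $Hom(\zz_{tr}(Y),r_i^*(K)[n])$ via the adjoint pair $(Lr_{i,\#},r_i^*)$, concludes these groups vanish, and then invokes the fact that the objects $\zz_{tr}(Y,i)$ generate $D({\cal X})$. You instead work at the level of cohomology presheaves: exactness of $r_i^*$ gives $H^n(r_i^*(K))=(H^n(K))_i$, vanishing of $r_i^*(K)$ in $D({\cal X}_i)$ forces all these components to vanish, and the termwise nature of $PST({\cal X})$ (kernels and cokernels computed componentwise, an object being zero iff all its components are) gives $H^n(K)=0$ for all $n$, hence $K\cong 0$ in $D_-$. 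Both arguments are sound and of comparable depth; in fact the generation statement the paper appeals to is itself justified by essentially the computation you make explicit, since $Hom_D(\zz_{tr}(Y,i),K[n])=H^n(K)_i(Y)$ by projectivity of representables. What the paper's formulation buys is reusability: the adjunction-plus-generators argument transfers verbatim to the localized category, which is exactly how Lemma \ref{isconserv2} is proved (``same as in Lemma \ref{isconserv}''), whereas your cohomology-presheaf criterion for vanishing is specific to $D({\cal X})$ and would not apply after localizing at $\af$-equivalences. What your version buys is that it avoids invoking $Lr_{i,\#}$ and the generation claim altogether, resting only on exactness and the componentwise description of $PST({\cal X})$.
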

\begin{proof}
Let $K$ be an object such that $r_i^*(K)\cong 0$ for all $i$. Then by
adjunction
$$Hom(\zz_{tr}(Y,i),K[n])=Hom(Lr_{i,\#}(\zz_{tr}(Y)),K[n])=$$
$$=Hom(\zz_{tr}(Y),r_i^*(K)[n])=0.$$
Since objects of the form $\zz_{tr}(Y,i)$ generate $D({\cal X})$ we
conclude that $K\cong 0$.
\end{proof}
Consider the composition 
$$r_{i}^*r_{j,\#}:PST({\cal X}_j)\sr PST({\cal X}_i).$$
By (\ref{need6}) it takes $\zz_{tr}(Y)$ to $\oplus_{\phi}
\zz_{tr}(Y\times_{\phi} {\cal X}_i)$ where $\phi$ runs through
morphisms $[j]\sr [i]$ in $\Delta$. Therefore we have
\begin{eq}
\llabel{rirj}
r_{i}^*Lr_{j,\#}=\oplus_{\phi} L{\cal X}_{\phi}^*
\end{eq}
and passing to $h_0(-)$ we get
$$r_{i}^*r_{j,\#}=\oplus_{\phi} {\cal X}_{\phi}^*.$$
\begin{remark}
\rm\llabel{rmcover}
The functors $r_{i}$ behave as if the terms ${\cal X}_i$ formed a
covering of the simplicial scheme ${\cal X}$ with $r_i^*$ being the
inverse image functors for this covering and $r_{i,\#}$ being the
functors which in the case of an open covering $j_i:U_i\sr X$ are
denoted by $(j_i)_!$. 
\end{remark}
The functors $r_i^*$ commute in the obvious sense with the functors
$f^*$ for morphisms $f:{\cal X}\sr {\cal Y}$ of simplicial schemes.

Let now ${\cal X}$ be a simplicial scheme over a scheme $S$. We
have a functor
$$c^*:PST(S)\sr PST({\cal X})$$
which sends a presheaf with transfers $F$ over $S$ to the collection 
$$c^*(F)=(({\cal X}_i\sr S)^*(F))_{i\ge 0}$$
with the obvious structure morphisms. This functor is clearly right
exact and using the representable resolution $Lres$ over $S$ we may
define a functor $Lc^*$ from complexes over $PST(S)$ to complexes over
$PST({\cal X})$. Then $Lc^*$ respects quasi-isomorphisms and therefore
defines a triangulated functor
$$Lc^*:D(S)\sr D({\cal X})$$
The functors $c^*$ are compatible with the pull-back functors $f^*$
such that for $f:{\cal X}\sr {\cal Y}$ we have a natural isomorphism
$$c^*=f^*c^*$$
and, for the functors on the derived categories, we have natural
isomorphisms 
$$Lc^*=Lf^*Lc^*$$
They are also compatible with the functors $r_i^*$ such that one has
$$r_i^*c^*=p_i^*$$
and
$$r_i^*Lc^*=Lp_i^*$$
where $p_i$ is the morphism ${\cal X}_i\sr S$.

If $\cal X$ is a smooth simplicial scheme over $S$ then the functor
$c^*$ has a left adjoint $c_{\#}$ which takes $\zz_{tr}(Y,i)$
to the presheaf with transfers $\zz_{tr}(Y/S)$ on $SmCor(S)$. In
particular in this case $c^*$ is exact. The functor $c_{\#}$ being a
left adjoint is right exact and we use representable resolutions to
define the left derived
$$Lc_{\#}:=c_{\#}\circ Lres$$
The functor $Lc_{\#}$ respects quasi-isomorphisms and the
corresponding functor on the derived categories is a left adjoint to
$c^*=Lc^*$.

Functors $c_{\#}$ are compatible with the functors $r_{i,\#}$ such
that one has
$$r_{i,\#}c_{\#}=p_{i,\#}$$
where $p_i$ is the smooth morphism ${\cal X}_i\sr S$, and on the level
of the derived categories one has
$$Lr_{i,\#}Lc_{\#}=Lp_{i,\#}.$$

\subsection{Tensor structure}
\label{ss2}
Recall that for a scheme $X$ one uses the fiber product of smooth
schemes over $X$ and the corresponding external product of finite
correspondences to define the tensor structure on $SmCor(X)$. One then
defines a tensor structure on $PST(X)$ setting
$$F\oo G:=h_0(Lres(F)\oo Lres(G))$$
where the tensor product on the right is defined by the tensor product
on $SmCor(X)$. If $f:X'\sr X$ is a morphism of schemes then there are
natural isomorphisms
\begin{eq}
\llabel{oopb0}
f^*(F\oo G)=f^*(F)\oo f^*(G)
\end{eq}
which are compatible on representable presheaves with transfers with
the isomorphisms
$$(Y\times_X X')\times_{X'}(Z\times_X X')=(Y\times_X Z)\times_{X} X'$$
Let now ${\cal X}$ be a simplicial scheme. For presheaves with transfers
$F$, $G$ over ${\cal X}$ the collection of presheaves with transfers
$F_i\oo G_i$ over ${\cal X}_i$ has a natural structure of a presheaf with
transfers over ${\cal X}$ defined by isomorphisms (\ref{oopb0}). This
structure is natural in $F$ and $G$ and one can easily see that the
pairing 
$$(F,G)\mapsto F\oo G$$
extends to a tensor structure on presheaves with transfers over ${\cal
X}$. The unit of this tensor structure is the constant presheaf with
transfers $\zz$ which has as its components the constant presheaves
with transfers over ${\cal X}_i$.
The following lemma is straightforward.
\begin{lemma}
\llabel{froo}
Let $F$, $G$ be presheaves of sets over ${\cal X}$. Then there is a
natural isomorphism
$$\zz_{tr}(F\times G)=\zz_{tr}(F)\oo\zz_{tr}(G)$$
\end{lemma}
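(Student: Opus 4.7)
The plan is to reduce the claim to its single-scheme analogue componentwise, then check compatibility with the structure morphisms of the simplicial scheme.

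First, unpacking definitions: from the discussion surrounding (\ref{com1}) the $i$-th component of $\zz_{tr}(F)$ equals $\zz_{tr}(F_i)$, and from the construction of the tensor product on $PST({\cal X})$ given earlier in this subsection the $i$-th component of $\zz_{tr}(F)\oo\zz_{tr}(G)$ equals $\zz_{tr}(F_i)\oo_{{\cal X}_i}\zz_{tr}(G_i)$. On the other hand, the $i$-th component of the presheaf of sets $F\times G$ is obviously $F_i\times G_i$. So it suffices to establish, for each $i$, a natural isomorphism
\[
\zz_{tr}(F_i\times G_i)\cong \zz_{tr}(F_i)\oo_{{\cal X}_i}\zz_{tr}(G_i)
\]
in $PST({\cal X}_i)$, and to check that these identifications intertwine the structure morphisms.

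The per-component statement is a standard fact over a single scheme. Both sides are bifunctorial in $(F_i,G_i)$ and commute with colimits separately in each variable: $\zz_{tr}$ is a left adjoint to the forgetful functor, products distribute over colimits in the cartesian closed category of presheaves of sets on $Sm/{\cal X}_i$, and the tensor product on $PST({\cal X}_i)$ is built from the external product on $SmCor({\cal X}_i)$ via $-\oo-=h_0(Lres(-)\oo Lres(-))$ so that it preserves direct sums of representables. Writing any presheaf of sets as a colimit of representables then reduces to the case $F_i=h_Y$, $G_i=h_Z$, where the isomorphism holds by the very definition of the tensor product on $SmCor({\cal X}_i)$: $\zz_{tr}(Y)\oo_{{\cal X}_i}\zz_{tr}(Z)=\zz_{tr}(Y\times_{{\cal X}_i}Z)=\zz_{tr}(h_Y\times h_Z)$.

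For the structure morphisms, the componentwise identification over ${\cal X}_i$ must be compatible, for every $\phi:[j]\sr[i]$ in $\Delta$, with the pullback along ${\cal X}_\phi$ applied to the structure maps $F_\phi$ and $G_\phi$. This is guaranteed by the natural isomorphism (\ref{oopb0}) on which the tensor structure of $PST({\cal X})$ is built, together with naturality of the identification of the previous paragraph in base change. The main obstacle is just the book-keeping here: one has to unwind how both sides of the claimed isomorphism were assembled from their components over the various ${\cal X}_i$ and verify that the coherence data provided by (\ref{oopb0}) match up. Once those coherences are in place the lemma follows formally.
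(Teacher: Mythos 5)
Your argument is correct, and it is exactly the intended one: the paper offers no proof (it declares the lemma straightforward), and the natural route is the one you take — both sides have $i$-th components computed termwise, so the claim reduces to the single-scheme isomorphism $\zz_{tr}(F_i\times G_i)\cong\zz_{tr}(F_i)\oo\zz_{tr}(G_i)$, which follows by writing presheaves of sets as colimits of representables (both sides preserve colimits in each variable, the tensor product being right exact and compatible with direct sums) and using $\zz_{tr}(Y)\oo\zz_{tr}(Z)=\zz_{tr}(Y\times_{{\cal X}_i}Z)$, with compatibility of the structure maps supplied by the base-change isomorphisms (\ref{oopb0}) that define the tensor structure on $PST({\cal X})$ in the first place. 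No gaps worth flagging.
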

A major difference between the categories
of presheaves with transfers over a scheme and over a simplicial
scheme lies in the fact that the tensor structure on $PST({\cal X})$ does
not come from a tensor structure on $SmCor({\cal X})$. In particular, for
a general ${\cal X}$, $\zz$ is not representable and the tensor product of
two representable presheaves with transfers is not representable.

Let us say that a presheaf with transfers $F$ is admissible if its
components $F_i$ are direct sums of representable presheaves with
transfers over ${\cal X}_i$. The class of admissible presheaves contains
$\zz$ and is closed under tensor products. The following
straightforward lemma implies that any representable presheaf with
transfers is admissible and in particular that $Lres$ provides a
resolution by admissible presheaves.
\begin{lemma}
\llabel{repad}
A presheaf with transfers of the form $\zz_{tr}(Y,i)$ is admissible.
\end{lemma}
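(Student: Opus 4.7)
The plan is to observe that this lemma is an immediate unpacking of the formula (\ref{need6}) together with the definition of admissibility. There is no real content beyond reading off what has already been computed.

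Concretely, I would proceed as follows. Fix a smooth scheme $Y$ over ${\cal X}_i$, so that $\zz_{tr}(Y,i)$ is a representable presheaf with transfers on ${\cal X}$. To verify admissibility I must check, for each $j\ge 0$, that the $j$-th component $(\zz_{tr}(Y,i))_j = r_j^*\zz_{tr}(Y,i)$ is a direct sum of representable presheaves with transfers over ${\cal X}_j$. But formula (\ref{need6}) (with the roles of $(Y,i)$ and $(Z,j)$ interchanged) tells us exactly that
$$(\zz_{tr}(Y,i))_j \;=\; \bigoplus_{\phi} \zz_{tr}\bigl(Y\times_{\phi} {\cal X}_j\bigr),$$
where $\phi$ runs over morphisms $[i]\sr[j]$ in $\Delta$. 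For each such $\phi$ we have the structure morphism ${\cal X}_\phi:{\cal X}_j\sr {\cal X}_i$, so $Y\times_{\phi}{\cal X}_j$ is a smooth scheme over ${\cal X}_j$ and $\zz_{tr}(Y\times_{\phi}{\cal X}_j)$ is representable in $SmCor({\cal X}_j)$. Thus $(\zz_{tr}(Y,i))_j$ is a direct sum of representables, and admissibility follows.

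There is essentially no obstacle here: the entire content of the lemma is the identification (\ref{need6}), which in turn is the natural transfer upgrade of the set-theoretic formula $(h_{(Y,i)})_j = \coprod_\phi h_{Y\times_\phi {\cal X}_j}$ that was derived earlier from the definition of $Sm/{\cal X}$. The only thing worth remarking on is the bookkeeping of variance: $\phi$ goes from $[i]$ to $[j]$, so that ${\cal X}_\phi$ runs in the opposite direction and the fiber product $Y\times_{\phi}{\cal X}_j$ is taken along the correct structure morphisms, guaranteeing that each summand lives naturally in $SmCor({\cal X}_j)$.
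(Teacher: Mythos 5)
Your proof is correct and is exactly the paper's argument: the lemma is stated there as following immediately from (\ref{need6}), and your write-up simply spells out that identification, with the variance bookkeeping handled correctly.
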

\begin{proof}
Follows immediately from (\ref{need6}).
\end{proof}
\begin{lemma}
\llabel{ooqi} Let $K, K', L$ be complexes of admissible presheaves
with transfers and $K\sr K'$ be a quasi-isomorphism. Then $K\oo L\sr
K'\oo L$ is a quasi-isomorphism.
\end{lemma}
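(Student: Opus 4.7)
The plan is to pass to the cone $C$ of $K\sr K'$. Since admissible presheaves are closed under direct sums and shifts, $C$ is again a complex of admissible presheaves, and it is acyclic. The cone of $K\oo L\sr K'\oo L$ is $C\oo L$, so it suffices to show $C\oo L$ is acyclic. The crux will be that tensoring with a representable $\zz_{tr}(Z)$ attached to a smooth morphism defines an exact functor on $PST$.

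First I would check the vanishing componentwise. Because the tensor structure on $PST({\cal X})$ is defined termwise, $r_i^*(C\oo L)=C_i\oo L_i$, and by Lemma~\ref{isconserv} it suffices to show that each $C_i\oo L_i$ is acyclic in $PST({\cal X}_i)$. Here $C_i$ is an acyclic bounded-above complex whose terms are direct sums of representables $\zz_{tr}(Y/{\cal X}_i)$, and $L_i$ is a bounded-above complex of the same shape.

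Next I would realise $C_i\oo L_i$ as the totalisation of the bounded-above double complex $\{C_{i,m}\oo L_{i,n}\}$. By the standard lemma on totalisations of bounded-above double complexes with acyclic rows, it is enough to show that each row $C_i\oo L_{i,n}$ is acyclic. Writing $L_{i,n}=\oplus_\alpha\zz_{tr}(Z_\alpha)$ and using that direct sums of acyclic complexes are acyclic, this reduces to showing that for every smooth $p\colon Z\sr{\cal X}_i$ the complex $C_i\oo\zz_{tr}(Z)$ is acyclic.

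The remaining technical point, which I expect to be the main obstacle, is that the functor $F\mapsto F\oo\zz_{tr}(Z)$ on $PST({\cal X}_i)$ is exact for smooth $p$. I would identify it with the composition $p_\#p^*$, where $p_\#$ is the left adjoint to $p^*$ sending $\zz_{tr}(W/Z)$ to $\zz_{tr}(W/{\cal X}_i)$: both functors preserve colimits and agree on representables, where they each send $\zz_{tr}(Y)$ to $\zz_{tr}(Y\times_{{\cal X}_i}Z)$. Exactness of $p^*$ for smooth $p$ is immediate from its description as restriction of sections, while exactness of $p_\#$ for smooth $p$ is a standard fact in the theory of presheaves with transfers, provable by the explicit formula for sections obtained from decomposing a morphism $U\sr W$ over ${\cal X}_i$ according to the induced structure map $U\sr Z$. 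Once this exactness is in hand, the reductions above conclude the proof.
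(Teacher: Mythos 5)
Your reductions are fine: passing to the cone, checking acyclicity termwise over each ${\cal X}_i$ (indeed quasi-isomorphisms and $\oo$ in $PST({\cal X})$ are defined termwise, so you do not even need Lemma \ref{isconserv} and Lemma \ref{rioo} for this), the double complex argument, and splitting $L_{i,n}$ into representables. The gap is exactly at the step you call the main obstacle: the exactness of $F\mapsto F\oo \zz_{tr}(Z)=p_{\#}p^*(F)$ on $PST({\cal X}_i)$. The identification with $p_{\#}p^*$ and the exactness of $p^*$ for smooth $p$ are correct, but the exactness of $p_{\#}$ is not a standard fact for presheaves \emph{with transfers}, and the justification you offer --- an explicit formula for the sections of $p_{\#}G$ obtained by decomposing a morphism $U\sr W$ over ${\cal X}_i$ according to the induced structure map $U\sr Z$ --- is precisely the argument for presheaves \emph{without} transfers and breaks down with transfers: a morphism in $SmCor({\cal X}_i)$ from $U$ to $W$ is a finite correspondence and determines only a finite correspondence, not a morphism, $U\sr Z$, so $Hom_{SmCor({\cal X}_i)}(U,W)$ does not decompose as a direct sum indexed by $Hom_{Sm/{\cal X}_i}(U,Z)$. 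For instance, with ${\cal X}_i=Spec(k)$, $Z=W=Spec(L)$ for a nontrivial finite separable extension $L/k$ and $U=Spec(k)$, there are no morphisms $U\sr Z$ at all, while $Hom_{SmCor(k)}(U,W)\cong \zz$. So the sectionwise formula, and with it your proof that $p_{\#}$ (equivalently $-\oo\zz_{tr}(Z)$) is exact, fails; for smooth non-finite $p$ I know of no proof of this exactness, and the paper never needs it.

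Fortunately no flatness statement is needed. At the point where you must show that $C_i\oo\zz_{tr}(Z)$ (or directly $C_i\oo L_i$) is acyclic, you already know that $C_i$ is an acyclic bounded-above complex whose terms are direct sums of representables, and by (\ref{equal1}) representables, hence their direct sums, are projective objects of $PST({\cal X}_i)$. An acyclic bounded-above complex of projectives is contractible, so $C_i\oo L_i$ is contractible and in particular acyclic. Equivalently, and this is the paper's own (one-line) argument: the quasi-isomorphism $K_i\sr K_i'$ is a homotopy equivalence between bounded-above complexes of projectives, the additive functor $-\oo L_i$ preserves homotopy equivalences, and quasi-isomorphisms and tensor products in $PST({\cal X})$ are termwise. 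Replacing your exactness claim by this projectivity argument closes the proof and also makes the double-complex step unnecessary.
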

\begin{proof}
The analog of this proposition for presheaves with transfers over each
${\cal X}_i$ holds since free presheaves with transfers are projective
objects in $PST({\cal X}_i)$. Since both quasi-isomorphisms and tensor
products in $PST({\cal X})$ are defined term-wise the proposition follows.
\end{proof}
In view of Lemmas \ref{repad} and \ref{ooqi} the functor
$$K\stackrel{L}{\oo}L:=Lres(K)\oo Lres(L)$$
respect quasi-isomorphisms in $K$ and $L$ and therefore defines a
functor on the derived categories which we also denote by
$\stackrel{L}{\oo}$. 

To see that this functor is a part of a good tensor triangulated
structure on $D({\cal X})$ we may use the following equivalent
definition. Let $A$ be the additive category of admissible presheaves
with transfers over ${\cal X}$ and $H_{-}(A)$ the homotopy category of
complexes bounded from the above over $A$. The tensor product of
presheaves with transfers makes $A$ into a tensor additive category
and we may consider the corresponding structure of the tensor
triangulated category on $H_{-}(A)$. Observe now that the natural
functor 
$$H_{-}(A)\sr D({\cal X})$$
is the localization with respect to the class of quasi-isomorphisms
and that the tensor product $\stackrel{L}{\oo}$ on $D({\cal X})$ is the
localization of the tensor product on $H_{-}(A)$. Since a tensor
trinagulated structure localizes well we conclude that $D({\cal X})$ is a
tensor trinagulated category with respect to $\stackrel{L}{\oo}$. More
precisely we have the following result.
\begin{proposition}
\llabel{mayoo0} The category $D({\cal X})$ is symmetric monoidal with
respect to the tensor product introduced above and this symmetric
monoidal structure satisfy axioms (TC1)-(TC3) of \cite{MayTT} with
respct to the standard triangulated structure.
\end{proposition}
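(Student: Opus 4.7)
The plan is to follow the outline sketched in the paragraph preceding the statement: first assemble a tensor triangulated structure at the level of complexes of admissible presheaves with transfers, and then transport it to $D({\cal X})$ via the localization functor $H_{-}(A)\sr D({\cal X})$. I would structure the argument in four steps.

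First, I would verify that $A$ is an additive symmetric monoidal category. Additivity is immediate from the definition. The unit $\zz$ is admissible by inspection and the tensor product of two admissible objects is again admissible because the $i$-th component of $\zz_{tr}(Y,i)\oo \zz_{tr}(Z,j)$ is a direct sum of representables by Lemma \ref{repad} and Lemma \ref{froo} (combined with the fact that a fiber product of smooth schemes over ${\cal X}_i$ is again smooth over ${\cal X}_i$, so the componentwise tensor product preserves admissibility). The associativity and symmetry constraints are inherited componentwise from the tensor structure on $SmCor({\cal X}_i)$.

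Second, I would recall the standard construction that attaches to any additive symmetric monoidal category $A$ a tensor triangulated category $H_{-}(A)$: the tensor product of bounded-above complexes is defined via the usual double complex construction (taking direct sums along antidiagonals, well defined by the boundedness), with Koszul signs in the symmetry isomorphism and the shift. The May axioms (TC1)--(TC3) are then formal consequences of the chain-level definitions: (TC1) is the compatibility of the symmetry with the shift, (TC2) is the compatibility of the associator, and (TC3) asserts that tensoring with a fixed object sends a mapping cone triangle to a triangle, which follows from the standard fact that $\mathrm{Cone}(f)\oo L\cong \mathrm{Cone}(f\oo L)$ for a chain map $f$ between bounded above complexes. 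None of this uses anything specific about $A$ beyond being an additive tensor category.

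Third, I would show that the natural functor $H_{-}(A)\sr D({\cal X})$ exhibits $D({\cal X})$ as the localization at quasi-isomorphisms. The functor $Lres$ lands in complexes over $SmCor({\cal X})\subset A$, and by Lemma \ref{ident} the augmentation $Lres(F)\sr F$ is a quasi-isomorphism in $PST({\cal X})$; passing to complexes and totalizations via (\ref{lres}) gives, for every $K\in D({\cal X})$, an admissible resolution $Lres(K)\sr K$. This is enough to conclude that every object and every morphism of $D({\cal X})$ comes from $H_{-}(A)$, and that inverting quasi-isomorphisms in $H_{-}(A)$ yields $D({\cal X})$.

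Fourth, I would invoke Lemma \ref{ooqi} to descend the tensor product. Since the derived tensor product $K\stackrel{L}{\oo}L=Lres(K)\oo Lres(L)$ agrees with the $H_{-}(A)$-tensor product on objects of $H_{-}(A)$, and since Lemma \ref{ooqi} ensures that quasi-isomorphisms of admissible complexes are sent to quasi-isomorphisms under tensor product in each variable separately, the $H_{-}(A)$-tensor product descends to the localization and is canonically identified with $\stackrel{L}{\oo}$. A tensor triangulated structure transports through a compatible localization (the associator, symmetry, and unit constraints all descend, and the axioms (TC1)--(TC3) are preserved because they are diagrammatic identities in the triangulated and monoidal data), giving the desired structure on $D({\cal X})$. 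The main obstacle in this plan is the bookkeeping of step four, namely the verification that Lemma \ref{ooqi} really suffices to descend all the coherence data and that the May axioms survive the localization; the preceding steps are formal once admissibility is in hand.
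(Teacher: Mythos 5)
Your proposal is correct and follows essentially the same route as the paper, whose own justification is exactly the outline you flesh out: work in the homotopy category $H_{-}(A)$ of bounded-above complexes of admissible presheaves with transfers, where the tensor triangulated structure and May's axioms are checked at the chain level, and then descend to $D({\cal X})$ through the localization at quasi-isomorphisms, using the admissible resolutions provided by $Lres$ and Lemma \ref{ooqi} to see that the tensor product passes to the localization. One minor caution: your parenthetical glosses of (TC1)--(TC3) are not quite May's formulations (in particular (TC3) is the compatibility axiom producing an auxiliary object $W$ for a pair of maps, as used in the proof of Theorem \ref{appmain}, not merely exactness of $-\oo L$), but this does not affect the argument, since all three axioms do hold for $H_{-}(A)$ by the standard chain-level constructions and are preserved under the localization, which is the same level of justification the paper itself relies on.
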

The interaction between the tensor structure and the standard functors
introduced above are given by the following lemmas.
\begin{lemma}
\llabel{oopb}
For a morphism of simplicial schemes $f:{\cal X}\sr {\cal Y}$ one has
canonical isomorphisms in $PST({\cal X})$ of the form
\begin{eq}\llabel{ooup1}
f^*(F\oo G)=f^*(F)\oo f^*(G)
\end{eq}
and canonical isomorphisms in $D({\cal X})$ of the form
\begin{eq}\llabel{ooup2}
Lf^*(K\stackrel{L}{\oo} L)=Lf^*(K)\stackrel{L}{\oo}Lf^*(L).
\end{eq}
\end{lemma}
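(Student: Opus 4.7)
The plan is to prove (\ref{ooup1}) directly from (\ref{oopb0}) and then bootstrap to (\ref{ooup2}) using admissible resolutions.

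For (\ref{ooup1}), I would use that the tensor product on $PST({\cal X})$ and the pullback functor $f^*$ are both defined componentwise: on level $i$ they reduce to the tensor product on $PST({\cal X}_i)$ and to the usual scheme-level pullback $f_i^*:PST({\cal Y}_i)\to PST({\cal X}_i)$. The desired isomorphism on this level is precisely (\ref{oopb0}) applied to $f_i$, and its naturality in the base scheme ensures compatibility with the structure maps $F_\phi$ associated to morphisms $\phi:[j]\to[i]$ in $\Delta$. Thus the componentwise isomorphisms assemble into an isomorphism of presheaves with transfers over ${\cal X}$.

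For (\ref{ooup2}), the strategy is to produce a complex of admissible presheaves with transfers on ${\cal X}$ that simultaneously represents both sides in $D({\cal X})$. Unwinding definitions,
\[
Lf^*(K\stackrel{L}{\oo}L) = f^*\bigl(Lres(Lres(K)\oo Lres(L))\bigr),
\]
\[
Lf^*(K)\stackrel{L}{\oo}Lf^*(L) = Lres\bigl(f^*(Lres(K))\bigr)\oo Lres\bigl(f^*(Lres(L))\bigr).
\]
By Lemma \ref{repad} every complex appearing here is admissible, and applying (\ref{ooup1}) termwise to $Lres(K)\oo Lres(L)$ yields an equality of admissible complexes $f^*(Lres(K)\oo Lres(L)) = f^*(Lres(K))\oo f^*(Lres(L))$. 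So it suffices to show that each side of (\ref{ooup2}) is canonically isomorphic in $D({\cal X})$ to this common admissible complex.

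For the right-hand side this is immediate from Lemma \ref{ooqi}: the augmentations $Lres(f^*(Lres(K)))\to f^*(Lres(K))$ and $Lres(f^*(Lres(L)))\to f^*(Lres(L))$ are quasi-isomorphisms of admissible complexes, so tensoring them together remains a quasi-isomorphism. For the left-hand side, the augmentation $Lres(Lres(K)\oo Lres(L))\to Lres(K)\oo Lres(L)$ is a quasi-isomorphism of bounded-above admissible complexes, and the task is to show that $f^*$ preserves it. This is the main obstacle. My approach is to observe that the mapping cone of such a quasi-isomorphism is an admissible, bounded-above, acyclic complex; by the projectivity of free presheaves with transfers in each $PST({\cal Y}_i)$ invoked in the proof of Lemma \ref{ooqi}, and the termwise structure of $PST({\cal Y})$, every such complex is null-homotopic in $PST({\cal Y})$. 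Since $f^*$ is additive, it preserves null-homotopies and therefore takes the cone to an acyclic complex, so it preserves the original quasi-isomorphism. This identifies both sides of (\ref{ooup2}) with $f^*(Lres(K)\oo Lres(L))$, completing the plan.
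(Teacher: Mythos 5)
Your proof of (\ref{ooup1}) is the same as the paper's: apply (\ref{oopb0}) componentwise and use its naturality to get compatibility with the structure maps $F_\phi$; nothing to add there. For (\ref{ooup2}) your overall route is also the paper's (the paper's proof is just the remark that the second statement follows from the first together with the fact that $f^*$ takes admissible objects to admissible objects), and you correctly isolate the real content: one must know that $f^*$ preserves quasi-isomorphisms between bounded-above complexes of admissible presheaves, so that $Lf^*$ may be computed on the admissible complex $Lres(K)\oo Lres(L)$ rather than on its representable resolution.

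However, the justification you give for that step contains a false intermediate claim. An acyclic bounded-above complex of admissible presheaves is in general \emph{not} null-homotopic in $PST({\cal Y})$: admissible objects are not projective in $PST({\cal Y})$ (only their components are projective in each $PST({\cal Y}_i)$), and the componentwise contracting homotopies need not be compatible with the structure maps $F_\phi$, so they do not assemble to a homotopy in $PST({\cal Y})$. Concretely, the cone of the quasi-isomorphism $L\zz_*\sr \zz$ of Lemma \ref{res} is an acyclic bounded-above complex of admissible presheaves; were it null-homotopic in $PST({\cal Y})$, every additive functor, in particular $c_{\#}$, would preserve its acyclicity, giving $Lc_{\#}(\zz)=c_{\#}(\zz)$, which contradicts the Remark following Proposition \ref{projform}. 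Fortunately your argument only needs the componentwise statement, and that is where it should be run: $f^*$ is defined componentwise by the $f_i^*$, and quasi-isomorphisms in $PST({\cal X})$ are detected componentwise; the $i$-th component of the cone is an acyclic bounded-above complex of direct sums of representables, hence of projectives in $PST({\cal Y}_i)$, hence null-homotopic there, and the additive functor $f_i^*$ preserves that homotopy. With this repair (replace "null-homotopic in $PST({\cal Y})$" by "componentwise null-homotopic, which suffices") your proof is correct and agrees in substance with the paper's.
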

\begin{proof}
The first statement follows immediately from (\ref{oopb0}). The second
follows from the first and the fact that $f^*$ takes admissible objects to
admissible objects.
\end{proof}
\begin{lemma}
\llabel{rioo}
For a simplicial scheme ${\cal X}$ one has 
canonical isomorphisms in $PST({\cal X}_{i})$ of the form
$$r_i^*(F\oo G)=r_i^*(F)\oo r_i^*(G)$$
and canonical isomorphisms in $D({\cal X}_i)$ of the form
$$Lr_i^*(K\stackrel{L}{\oo} L)=Lr_i^*(K)\stackrel{L}{\oo}Lr_i^*(L).$$
\end{lemma}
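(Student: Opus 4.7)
The plan is to derive both isomorphisms essentially by unwinding definitions, with the derived statement relying additionally on the exactness of $r_i^*$. First I would verify the underived isomorphism: the tensor product on $PST({\cal X})$ was defined component-wise, so the $i$-th component of $F\oo G$ is $F_i\oo G_i$ as a presheaf with transfers over ${\cal X}_i$. Since $r_i^*$ is by construction the functor of taking the $i$-th component (this is the content of equation (\ref{equal1}) for $Y$ ranging over all smooth schemes over ${\cal X}_i$), the identity $r_i^*(F\oo G)=r_i^*(F)\oo r_i^*(G)$ is immediate, with no further compatibility to check.

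For the derived version, recall that $r_i^*$ is exact, so $Lr_i^*$ agrees with $r_i^*$ on the level of derived categories. Unwinding the definition of the derived tensor product, the first part applied termwise yields
$$r_i^*(K\stackrel{L}{\oo}L)=r_i^*(Lres(K)\oo Lres(L))=r_i^*(Lres(K))\oo r_i^*(Lres(L)).$$
It remains to identify the right hand side with $r_i^*(K)\stackrel{L}{\oo}r_i^*(L)$ in $D({\cal X}_i)$.

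The crucial input is formula (\ref{need6}), which states that $r_i^*(\zz_{tr}(Z,j))$ is the direct sum $\oplus_\phi \zz_{tr}(Z\times_\phi{\cal X}_i)$ of representables over ${\cal X}_i$. Hence $r_i^*$ sends each term of $Lres(K)$ to a direct sum of representables in $PST({\cal X}_i)$, so $r_i^*(Lres(K))$ is a complex of admissible presheaves which, by exactness of $r_i^*$, is quasi-isomorphic to $r_i^*(K)$; the same applies to $L$. By the analogue of Lemma \ref{ooqi} for the single scheme ${\cal X}_i$, any such admissible resolution computes the derived tensor product, so the complex $r_i^*(Lres(K))\oo r_i^*(Lres(L))$ represents $r_i^*(K)\stackrel{L}{\oo}r_i^*(L)$ in $D({\cal X}_i)$. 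I do not anticipate a real obstacle: the content is entirely in the bookkeeping above, and the only substantive fact invoked is the preservation of admissibility under $r_i^*$ supplied by (\ref{need6}).
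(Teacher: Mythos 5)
Your argument is correct and is exactly the route the paper intends: the paper leaves Lemma \ref{rioo} without an explicit proof, and the proofs of the neighboring Lemmas \ref{oopb} and \ref{ooc} have precisely this shape (the underived statement from the component-wise definition of $\oo$ and the fact that $r_i^*$ is the $i$-th component functor, the derived one from exactness of $r_i^*$ plus the fact, via (\ref{need6}), that $r_i^*$ takes representables to direct sums of representables, so admissible resolutions are preserved and compute $\stackrel{L}{\oo}$ over ${\cal X}_i$). No gaps.
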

\begin{lemma}
\llabel{ooc}
For a simplicial scheme ${\cal X}$ over a scheme $S$ one has
canonical isomorphisms in $PST({\cal X})$ of the form
$$c^*(F\oo G)=c^*(F)\oo c^*(G)$$
and canonical isomorphisms in $D({\cal X})$ of the form
$$Lc^*(K\stackrel{L}{\oo} L)=Lc^*(K)\stackrel{L}{\oo}Lc^*(L).$$
\end{lemma}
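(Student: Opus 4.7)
The plan is to adapt the proof of Lemma \ref{oopb} to $c^*$: deduce the underived statement componentwise from (\ref{oopb0}), then bootstrap to the derived statement using that $c^*$ preserves admissibility. In fact one can view $S$ as the constant simplicial scheme $\widetilde S$ with $\widetilde S_i = S$ and identity structure maps, so that $c$ becomes a morphism of simplicial schemes ${\cal X} \sr \widetilde S$ and Lemma \ref{ooc} reduces formally to Lemma \ref{oopb}; a direct proof is essentially the same.

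\textbf{Underived isomorphism.} By construction of the tensor structure on $PST({\cal X})$, the $i$-th component of $F \oo G$ is $F_i \oo G_i$, so the $i$-th component of $c^*(F \oo G)$ is $p_i^*(F \oo G)$, where $p_i : {\cal X}_i \sr S$. The isomorphism (\ref{oopb0}) for $p_i$ identifies this with $p_i^*(F) \oo p_i^*(G)$, which is the $i$-th component of $c^*(F) \oo c^*(G)$. It remains to verify compatibility with the structure morphisms attached to $\phi : [j] \sr [i]$ in $\Delta$, which is a diagram chase expressing naturality of (\ref{oopb0}) under composition of morphisms of schemes.

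\textbf{Derived isomorphism and main obstacle.} Observe that $c^*$ takes admissible presheaves over $S$ to admissible presheaves over ${\cal X}$, since the $i$-th component of $c^*(\zz_{tr}(Y))$ is $\zz_{tr}(Y \times_S {\cal X}_i)$. By definition $K \stackrel{L}{\oo} L = Lres(K) \oo Lres(L)$ is an admissible complex over $S$ (by Lemma \ref{repad} and the closure of admissibles under $\oo$), so the underived isomorphism applied to $Lres(K)$ and $Lres(L)$ gives
\begin{equation*}
c^*(Lres(K) \oo Lres(L)) = c^*(Lres(K)) \oo c^*(Lres(L)) = Lc^*(K) \oo Lc^*(L),
\end{equation*}
using $Lc^* = c^* \circ Lres$ at the last equality. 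The one remaining point --- identifying the left side with $Lc^*(K \stackrel{L}{\oo} L)$ and the right side with $Lc^*(K) \stackrel{L}{\oo} Lc^*(L)$ --- is the same technical issue handled implicitly in Lemma \ref{oopb}: both identifications follow from repeated application of Lemma \ref{ooqi} to the canonical quasi-isomorphisms $Lres(M) \sr M$ for the admissible complexes $M$ in sight, so that on admissibles the derived tensor product reduces to the ordinary one and $Lc^*$ reduces to $c^*$.
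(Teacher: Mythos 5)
Your argument is correct and is essentially the paper's own (much terser) proof: the underived statement is read off componentwise from (\ref{oopb0}), and the derived statement follows because $c^*$ sends representable presheaves with transfers over $S$ to admissible presheaves over ${\cal X}$, with Lemma \ref{repad} and Lemma \ref{ooqi} handling the quasi-isomorphism bookkeeping. The only minor imprecision is that identifying $Lc^*(K\stackrel{L}{\oo}L)$ with $c^*(Lres(K)\oo Lres(L))$ rests not on Lemma \ref{ooqi} but on the fact that $Lres(K)\oo Lres(L)$ is already a complex of representable (hence projective) presheaves over $S$, so $c^*$ computes $Lc^*$ on it directly; this is standard and does not affect the proof.
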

\begin{proof}
The first statement follows immediately from (\ref{oopb0}). The second
from the first and the fact that $f^*$ takes representable presheaves
with transfers over $S$ to admissible presheaves with transfers over
${\cal X}$.
\end{proof}
\begin{lemma}
\llabel{projform0}
For a simplicial scheme ${\cal X}$ over a scheme $S$ such that all ${\cal X}_i$
are smooth over $S$ one has
canonical isomorphisms in $PST({\cal X})$ of the form
$$c_{\#}(F\oo c^*(G))=c_{\#}(F)\oo G$$
and canonical isomorphisms in $D({\cal X})$ of the form
$$Lc_{\#}(F\stackrel{L}{\oo} Lc^*(G))=Lc_{\#}(F)\stackrel{L}{\oo}
G.$$
\end{lemma}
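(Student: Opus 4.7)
The plan is to reduce both assertions to the representable case, where they become an explicit fiber-product computation. For the first (unstable) formula, both sides are right exact in $F$ and $G$ and commute with direct sums: $c^*$ is exact under the smoothness hypothesis, $c_{\#}$ is a left adjoint, and $\oo$ is right exact. Using $Lres$ to resolve both variables, it therefore suffices to verify the identity when $F=\zz_{tr}(Y,i)$ and $G=\zz_{tr}(W)$ are representable.

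In that case I would compute both sides componentwise. By (\ref{need6}) the $j$-th component of $F$ is $\oplus_\phi \zz_{tr}(Y\times_\phi{\cal X}_j)$, while $c^*(G)$ has $j$-th component $\zz_{tr}(W\times_S{\cal X}_j)$. Lemma~\ref{froo} together with the elementary identity
$$(Y\times_\phi{\cal X}_j)\times_{{\cal X}_j}(W\times_S{\cal X}_j)=(Y\times_S W)\times_\phi{\cal X}_j,$$
in which $Y\times_S W$ is viewed as smooth over ${\cal X}_i$ via its first projection (using that $Y\to{\cal X}_i\to S$ factors the structure map of $Y/S$), identifies $(F\oo c^*(G))_j$ with the $j$-th component of $\zz_{tr}(Y\times_S W,i)$; the structure maps agree since both sides are built from base-change morphisms. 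Applying $c_{\#}$ and using that it sends $\zz_{tr}(-,i)$ to $\zz_{tr}(-/S)$ yields $\zz_{tr}(Y\times_S W/S)=\zz_{tr}(Y/S)\oo\zz_{tr}(W/S)=c_{\#}(F)\oo G$, which is the right-hand side.

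For the derived statement, exactness of $c^*$ gives $Lc^*(G)=c^*(G)$. The computation above shows moreover that $\zz_{tr}(Y,i)\oo c^*(\zz_{tr}(W))=\zz_{tr}(Y\times_S W,i)$ is representable, not merely admissible. Setting $P_\bullet=Lres(F)$ and $Q_\bullet=Lres(G)$, the complex $P_\bullet\oo c^*(Q_\bullet)$ thus consists of representables on ${\cal X}$; and by Lemma~\ref{ooqi} applied to the quasi-isomorphism $c^*(Q_\bullet)\sr c^*(G)$ of admissible complexes it computes $F\stackrel{L}{\oo}Lc^*(G)$ in $D({\cal X})$. Representables in $PST({\cal X})$ are projective by (\ref{equal1}), and $c_{\#}$ sends them to representables in $PST(S)$, so $Lc_{\#}$ agrees with $c_{\#}$ on any complex of representables. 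Hence
$$Lc_{\#}(F\stackrel{L}{\oo}Lc^*(G))=c_{\#}(P_\bullet\oo c^*(Q_\bullet))=c_{\#}(P_\bullet)\oo Q_\bullet=Lc_{\#}(F)\stackrel{L}{\oo}G,$$
where the middle equality is the unstable identity applied termwise and the last uses that $c_{\#}(P_\bullet)$ and $Q_\bullet$ are already complexes of representables.

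The main delicacy is the bookkeeping of resolutions and admissibility: in particular, recognizing that the tensor inside $Lc_{\#}$ is a complex of genuine representables (not merely admissibles), which is what allows one to drop the outer $Lres$ before applying $c_{\#}$ and to invoke the unstable formula termwise.
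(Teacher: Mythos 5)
Your proposal is correct and follows essentially the same route as the paper: reduce by right exactness to representable $F$ and $G$, verify the key identity $\zz_{tr}(Y,i)\oo c^*(\zz_{tr}(Z))=\zz_{tr}(Y\times_S Z,i)$ together with $c_{\#}(\zz_{tr}(Y,i))=\zz_{tr}(Y/S)$, and deduce the derived statement from the fact that this tensor product is again representable. The only differences are expository: the paper first builds the comparison map abstractly from the adjunction and the compatibility of $c^*$ with $\oo$, and simply asserts the representable-case isomorphism, whereas you verify it componentwise and spell out why the outer $Lres$ can be dropped.
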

\begin{proof}
Since (\ref{ooup1}) holds and $c_{\#}$ is the left adjoint to $c^*$
there is a natural map
$$c_{\#}(F\oo c^*(G))\sr c_{\#}(F)\oo G.$$
Since all the functors here are right exact and every presheaf with
transfers is the colimit of a diagram of representable presheaves with
transfers it is sufficient to check that this map is an isomorphism
for representable $F$ and $G$. This follows immediately from the
isomorphisms
\begin{eq}\llabel{need5}
\zz_{tr}(Y,i)\oo c^*(\zz_{tr}(Z))=\zz_{tr}(Y\times_S Z,i)
\end{eq}
and
$$c_{\#}(\zz_{tr}(Y,i))=\zz_{tr}(Y/S)$$
The isomorphism (\ref{need5}) implies also that for a representable
$G$ and a representable $F$, $F\oo c^*(G)$ is representable. Therefore
the first statement of the lemma implies the second.
\end{proof}
To compute $Lc_{\#}$ on the constant sheaf we need the following
result.
\begin{lemma}
\llabel{res} Conisder the simplicial
object $L\zz_{\bullet}$ in $SmCor({\cal X})$ with terms 
$$L\zz_{i}=\zz_{tr}({\cal X}_i,i)$$
and the obvious structure morphisms. Let $L\zz_{*}$ be the
corresponding complex. Then there is a natural
quasi-isomorphism 
$$L\zz_{*}\sr \zz$$
\end{lemma}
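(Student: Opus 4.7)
The plan is to apply Lemma \ref{isconserv}: it suffices to show that for each $i\ge 0$ the augmented complex $r_i^*(L\zz_*)\sr r_i^*(\zz)$ is a quasi-isomorphism in $D({\cal X}_i)$. This reduction is legitimate because $r_i^*$ is exact, hence preserves quasi-isomorphisms and commutes with the passage from a simplicial object to its associated chain complex.

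Fix $i$ and note that $L\zz_j=\zz_{tr}({\cal X}_j,j)=r_{j,\#}(\zz_{tr}({\cal X}_j))$. Formula (\ref{rirj}) then gives
$$r_i^*(L\zz_j)=\bigoplus_{\phi\colon[j]\sr[i]}{\cal X}_\phi^*(\zz_{tr}({\cal X}_j))=\bigoplus_{\phi\colon[j]\sr[i]}\zz_{tr}({\cal X}_i),$$
the second equality because ${\cal X}_\phi^*(\zz_{tr}({\cal X}_j))=\zz_{tr}({\cal X}_j\times_{{\cal X}_j}{\cal X}_i)=\zz_{tr}({\cal X}_i)$. Tracking the induced face and degeneracy maps, one identifies $r_i^*(L\zz_\bullet)$ with the simplicial presheaf with transfers $\zz_{tr}({\cal X}_i)\oo_\zz\zz[\Delta^i_\bullet]$, where $\Delta^i$ denotes the standard simplicial $i$-simplex with $(\Delta^i)_j=Hom_\Delta([j],[i])$.

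Passing to the associated complex gives $r_i^*(L\zz_*)=\zz_{tr}({\cal X}_i)\oo_\zz C_*(\Delta^i)$. Since $\Delta^i$ is contractible, the augmentation $C_*(\Delta^i)\sr\zz$ is a chain homotopy equivalence of complexes of free abelian groups, and tensoring it termwise with the abelian groups of sections of $\zz_{tr}({\cal X}_i)$ preserves this. Combined with the identification $r_i^*(\zz)=\zz_{tr}({\cal X}_i)$ (since the unit of the tensor structure on $PST({\cal X}_i)$ is the representable $\zz_{tr}({\cal X}_i)$), this produces a quasi-isomorphism $r_i^*(L\zz_*)\sr r_i^*(\zz)$.

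What remains, and is the only real (though routine) obstacle, is to check that this quasi-isomorphism is precisely the $r_i^*$-image of the natural augmentation $L\zz_*\sr\zz$. Unwinding the adjunction $r_{0,\#}\dashv r_0^*$, the natural augmentation $L\zz_0=r_{0,\#}(\zz_{tr}({\cal X}_0))\sr\zz$ corresponds to the identity map $\zz_{tr}({\cal X}_0)\sr r_0^*(\zz)=\zz_{tr}({\cal X}_0)$; under the identifications above, its $r_i^*$-pullback on the $0$-th term becomes the sum map $\zz_{tr}({\cal X}_i)^{i+1}\sr\zz_{tr}({\cal X}_i)$ over the $i+1$ vertices of $\Delta^i$, which is the classical augmentation of the simplex chain complex tensored with $\zz_{tr}({\cal X}_i)$.
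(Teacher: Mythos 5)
Your proof is correct and takes essentially the same route as the paper: the paper verifies the quasi-isomorphism sectionwise on each $(Y,j)$, identifying $L\zz_{\bullet}(Y,j)$ with $\zz(\Delta^j)\oo\zz(Y)$ and using contractibility of the standard simplex, which is exactly your identification $r_i^*(L\zz_{\bullet})\cong \zz_{tr}({\cal X}_i)\oo C_*(\Delta^i)$ read at the level of sections. Your appeal to Lemma \ref{isconserv} (plus exactness of $r_i^*$) is just a repackaging of that componentwise check, and the final compatibility verification with the augmentation matches the paper's implicit one.
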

\begin{proof}
We have to show that for any $(Y,j)$ the simplicial abelian
group $L\zz_{\bullet}(Y,j)$ is a resolution for the abelian
group 
$$\zz(Y)=H^0(Y,\zz)$$
Indeed one verifies easily that
$$L\zz_{\bullet}(Y,j)=\zz(\Delta^j)\oo \zz(Y)$$
and since $\Delta^j$ is contractible the projection $\Delta^j\sr pt$
defines a natural quasi-isomorphism $L\zz_{\bullet}(Y,j)\sr \zz(Y)$.
\end{proof}
If ${\cal X}$ is such that all its terms are disjoint unions of
smooth schemes over $S$ then we may consider the complex
$\zz_{tr}({\cal X})_*$ defined by the simplicial object represented by
${\cal X}$ in the derived categories of presheaves with transfers
over $S$. Note that 
$$\zz_{tr}({\cal X})=c_{\#}(L\zz\BB)$$
and therefore, Lemmas \ref{projform0} and \ref{res} impliy 
the following formula.
\begin{proposition}
\llabel{projform}
For a complex of presheaves with transfers $K$ over $S$ one has
$$Lc_{\#}Lc^*(K)\cong \zz_{tr}({\cal X})_*\stackrel{L}{\oo} K$$
\end{proposition}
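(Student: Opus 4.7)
The plan is to combine the three ingredients just assembled: the projection formula of Lemma \ref{projform0}, the resolution $L\zz_* \sr \zz$ of Lemma \ref{res}, and the identity $\zz_{tr}({\cal X}) = c_{\#}(L\zz\BB)$.

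First I would argue that there is a quasi-isomorphism
$$L\zz_* \stackrel{L}{\oo} Lc^*(K) \;\stackrel{\sim}{\sr}\; \zz \stackrel{L}{\oo} Lc^*(K) \;=\; Lc^*(K).$$
Each term $L\zz_i = \zz_{tr}({\cal X}_i,i)$ is representable, so $L\zz_*$ is a complex of admissible presheaves with transfers by Lemma \ref{repad}. Similarly, $Lc^*(K) = c^*(Lres(K))$ is a complex of admissible objects, because $c^*$ carries representable presheaves with transfers over $S$ to representable (hence admissible) ones over ${\cal X}$. Therefore Lemma \ref{ooqi} lets us tensor the quasi-isomorphism of Lemma \ref{res} with $Lc^*(K)$ on either factor without leaving the world of quasi-isomorphisms.

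Next I would apply $Lc_{\#}$ and invoke the derived projection formula of Lemma \ref{projform0} to get
$$Lc_{\#}Lc^*(K) \;\cong\; Lc_{\#}(L\zz_* \stackrel{L}{\oo} Lc^*(K)) \;\cong\; Lc_{\#}(L\zz_*) \stackrel{L}{\oo} K.$$
It remains to identify $Lc_{\#}(L\zz_*)$ with $\zz_{tr}({\cal X})_*$. Since each term $L\zz_i = \zz_{tr}({\cal X}_i,i)$ is representable, the canonical left resolution can be replaced by $L\zz_*$ itself, so $Lc_{\#}(L\zz_*) = c_{\#}(L\zz_*)$ termwise. Using the identity $c_{\#}(\zz_{tr}(Y,i)) = \zz_{tr}(Y/S)$ on each degree gives $c_{\#}(L\zz_i) = \zz_{tr}({\cal X}_i/S)$, which assembles into the simplicial complex $\zz_{tr}({\cal X})_*$ that represents ${\cal X}$ in $D(S)$. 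This is precisely the hint $\zz_{tr}({\cal X}) = c_{\#}(L\zz\BB)$, applied dimensionwise.

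The only substantive point to be careful about is the admissibility step used to invoke Lemma \ref{ooqi}: one must remember to tensor against the admissible resolution $L\zz_*$ rather than $\zz$ itself, because $\zz$ is not representable on a general simplicial scheme. Once this is arranged, everything is formal and the two isomorphisms above chain together to give the claimed formula.
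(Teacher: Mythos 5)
Your proof is correct and takes exactly the route the paper intends: the paper's own (implicit) proof is the remark that $\zz_{tr}({\cal X})=c_{\#}(L\zz_{\bullet})$ together with Lemmas \ref{projform0} and \ref{res}, which is precisely your chain $Lc_{\#}Lc^*(K)\cong Lc_{\#}(L\zz_{*}\stackrel{L}{\oo}Lc^*(K))\cong Lc_{\#}(L\zz_{*})\stackrel{L}{\oo}K\cong \zz_{tr}({\cal X})_*\stackrel{L}{\oo}K$, with the admissibility of $L\zz_*$ and of $Lc^*(K)$ justifying the use of Lemma \ref{ooqi} and the termwise identification $c_{\#}(\zz_{tr}({\cal X}_i,i))=\zz_{tr}({\cal X}_i/S)$ giving the last step. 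One small correction: $c^*$ takes representable presheaves with transfers over $S$ to admissible, not in general representable, presheaves over ${\cal X}$ (this is exactly how the paper phrases it in the proof of Lemma \ref{ooc}; e.g. $c^*(\zz)=\zz$ is not representable for general ${\cal X}$), but since only admissibility is needed to invoke Lemma \ref{ooqi}, this does not affect your argument.
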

%
%
\begin{remark}\rm
It is easy to see that the functors $Lf^*$ can be computed using more
general admissible resolutions instead of the representable
resolutions. But we can not use admissible resolitions to compute
$Lc_{\#}$ since for the (admissible) constant presheaf with transfers
$$\zz=Lc^*(\zz)$$ 
we have by \ref{projform}:
$$c_{\#}(\zz)=h_0(Lc_{\#}(\zz))=h_0(\zz_{tr}({\cal X})_*)\ne
\zz_{tr}({\cal X}_*)=Lc_{\#}(\zz)$$
\end{remark}
\begin{remark}\rm
It would be interesting to find a nice explicit description of the
complex $\zz_{tr}({\cal X}_i,i)\oo \zz_{tr}({\cal X}_j,j)$ or,
equivalently, a nice simplicial resolution of $h_{({\cal X}_i,i)}\times h_{({\cal
X}_j,j)}$ by representable presheaves (of sets).
\end{remark}

\subsection{Relative motives}
\llabel{ss3}
For a scheme $X$ let $W^{el}(X)$ be the class of complexes over
$PST(X)$ defined as follows:
\begin{enumerate}
\item for any (upper) distinguished square 
\begin{eq}
\llabel{updist}
\begin{CD}
W @>>> V\\
@VVV @VVV\\
U @>>> Y
\end{CD}
\end{eq}
in $Sm/X$, the corresponding Mayer-Vietoris complex 
$$\zz_{tr}(W)\sr \zz_{tr}(U)\oplus \zz_{tr}(V)\sr
\zz_{tr}(Y)$$
is in $W^{el}(X)$
\item for any $Y$ in $Sm/X$, the complex $\zz_{tr}(Y\times\af)\sr
\zz_{tr}(Y)$ is in $W^{el}(X)$.
\end{enumerate}
Let further $W(X)$ be the smallest class in $D(X)$ which contains
$W^{el}(X)$ and is closed under triangles, direct sums and direct
summands. One says that a morphism in $D(X)$ is an $\af$-equivalence
if its cone lies in $W(X)$ and defines the triangulated category
$DM_{-}^{eff}(X)$ of (effective, connective) motives over $X$ as
the localization of $D(X)$ with respect to $\af$-equivalences.

For a simplicial ${\cal X}$ consider 
$$W_{i}^{el}({\cal X}):=r_{i,\#}(W^{el}({\cal X}_i))$$
as classes of complexes in $PST({\cal X})$.  Let $W({\cal X})$ be the
smallest class in $D({\cal X})$ which contains all $W_i^{el}({\cal
X})$ and is closed under triangles, direct sums and direct summands.
\begin{definition}
\llabel{a1eq} A morphism $u$ in $D({\cal X})$ is called
an $\af$-equivalence if its cone lies in $W({\cal X})$.
\end{definition}
\begin{definition}
Let $\cal X$ be a simplicial scheme. The triangulated category
$DM_{-}^{eff}({\cal X})$ of (effective, connective) motives over $\cal
X$ is the localization of $D({\cal X})$ with respect to
$\af$-equivalences.
\end{definition}
\begin{lemma}
\llabel{resp}
\begin{enumerate}
\item For any morphism $f$ of simplicial schemes the functor $Lf^*$ takes
$\af$-equivalences to $\af$-equivalences,
\item for any simplicial scheme the functors $r_i^*$ take
$\af$-equivalences to $\af$-equivalences,
\item for any simplicial scheme the functors $Lr_{i,\#}$ take
$\af$-equivalences to $\af$-equivalences,
\item for any simplicial scheme over $S$ the functor $Lc^*$ takes
$\af$-equivalences to $\af$-equivalences,
\item for any smooth simplicial scheme over $S$ the functor $Lc_{\#}$
takes $\af$-equivalences to $\af$-equivalences.
\end{enumerate}
\end{lemma}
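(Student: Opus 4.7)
The strategy for all five parts is identical: each functor is triangulated and commutes with arbitrary direct sums and direct summands, so the collection of objects it sends into $W$ of the target is automatically closed under triangles, direct sums, and summands. Hence it suffices to verify the claim on the explicit generators $W^{el}$ or $W_i^{el}$ of the source class. Part (3) is then immediate from the definition $W_i^{el}({\cal X}) = r_{i,\#}(W^{el}({\cal X}_i))$, since on a complex of representable presheaves with transfers $Lres$ is a termwise quasi-isomorphism, so $Lr_{i,\#}$ coincides with the underived $r_{i,\#}$ applied termwise.

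Parts (1), (2) and (5) reduce to the known scheme-level assertions that $Lg^*$ (for any $g$) and $Lg_{\#}$ (for smooth $g$) preserve the class $W$, via three base-change identifications. For (1), a direct computation on representables gives $Lf^*(\zz_{tr}(Y,i)) = \zz_{tr}(Y\times_{{\cal Y}_i} {\cal X}_i, i)$, hence $Lf^* Lr_{i,\#} = Lr_{i,\#} Lf_i^*$ where $f_i$ is the $i$-th component of $f$; this combined with the scheme case and (3) disposes of (1). For (2), formula (\ref{rirj}) gives $r_j^* Lr_{i,\#} = \oplus_\phi L{\cal X}_\phi^*$, and applying the scheme case to each $L{\cal X}_\phi^*$ places the image in $W({\cal X}_j)$. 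For (5), the compatibility $Lc_{\#} Lr_{i,\#} = Lp_{i,\#}$ noted just before Lemma \ref{res}, with $p_i$ smooth, reduces the problem directly to the smooth pushforward case for schemes.

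The substantive point is (4). The plan is to trivialise $Lc^*(K)$ against the resolution of Lemma \ref{res}: from the quasi-isomorphism $L\zz_* \sr \zz$ in $D({\cal X})$ we obtain
$$Lc^*(K) \cong Lc^*(K) \stackrel{L}{\oo} L\zz_*,$$
a total complex assembled from the terms $Lc^*(K) \stackrel{L}{\oo} L\zz_i$ and the simplicial differentials of $L\zz_{\bullet}$. The key computation is the projection-formula-type identification
$$Lc^*(K) \stackrel{L}{\oo} L\zz_i \cong Lr_{i,\#}(Lp_i^*(K)).$$
I would first verify this on representable $K = \zz_{tr}(Y)$ by a direct componentwise calculation using the explicit formula for the $j$-th term of $\zz_{tr}(-,i)$: both sides come out equal to $\zz_{tr}(Y\times_S {\cal X}_i,i)$. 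One then extends to arbitrary $K \in D(S)$ by a triangulated/colimit argument, both sides being triangulated functors of $K$ commuting with direct sums. With this identification in hand, for $K \in W^{el}(S)$ the scheme-level case yields $Lp_i^*(K) \in W({\cal X}_i)$, hence $Lr_{i,\#}(Lp_i^*(K)) \in W({\cal X})$ by (3); closure of $W({\cal X})$ under triangles and direct sums then places the total complex, and thus $Lc^*(K)$, in $W({\cal X})$. The only genuine obstacle is establishing the projection-formula identification above; everything else is either a definitional unwinding or the known scheme-level statement.
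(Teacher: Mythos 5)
Your argument is correct and follows essentially the same route as the paper: parts (1), (3) and (5) by the compatibilities of the functors with $r_{i,\#}$ on representables, part (2) via (\ref{rirj}), and part (4) by tensoring with the resolution $L\zz_*$ of Lemma \ref{res} and using (\ref{need5}) to identify the terms of the resulting bicomplex with $r_{i,\#}$ of the pullbacks of the elementary complexes, which lie in $W^{el}({\cal X}_i)$. The only (harmless) extra step is your extension of the projection-formula identification to arbitrary $K$; since the generators $W^{el}(S)$ are complexes of representables, the direct computation on representables already suffices, which is exactly what the paper does.
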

\begin{proof}
It follows immediately from the definitions that the functors $Lf^*$
and $Lc_{\#}$ and $r_{i,\#}$ take $\af$-equivalences to
$\af$-equivalences.  

The functor $r_{i}^*$ takes $\af$-equivalences to $\af$-equivalences
by (\ref{rirj}).

To see that $Lc^*$ takes $\af$-equivalences to
$\af$-equivalences consider a complex $L$ over $S$ which consists of
representable presheaves with transfers. Let further $L_i$ be the
pull-back of $L$ to ${\cal X}_i$ which we consider as a complex of
representable presheaves with transfers over ${\cal X}$. One has
$$Lc^*(L)=\zz\oo Lc^*(L)=L\zz_{*}\oo
Lc^*(L)$$
where $L\zz_*$ is the complex of Lemma \ref{res}. By (\ref{need5}) we
conclude that $Lc^*(L)$ is quasi-isomorphic to the total complex of a
bicomplex with terms of the form $r_{i,\#}(L_i)$. Since for $L\in
W^{el}(S)$ we have $L_i\in W^{el}({\cal X}_i)$ for all $i$ this
implies that $Lc^*$ takes $W(S)$ to $W({\cal X})$.
\end{proof}
We keep the notations $Lf^*$, $r_i^*$, $Lr_{i,\#}$, $Lc^*$ and
$Lc_{\#}$ for the functors between the categories $DM^{eff}$ which are
defined by $Lf^*$, $Lc^*$ and $Lc_{\#}$ respectively. Note
(cf. \cite[Prop. 2.6.2]{Delnotes}) that these functors have the same
adjunction properties as the original functors.
\begin{lemma}
\llabel{isconserv2}
The family of functors
$$r_i^*:DM^{eff}_{-}({\cal X})\sr DM^{eff}_{-}({\cal X}_i)$$
is conservative i.e. if $r_i^*(K)\cong 0$ for all $i$ then $K\cong 0$.
\end{lemma}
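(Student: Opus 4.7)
The plan is to follow exactly the same argument used to prove Lemma \ref{isconserv}, but transposed to the localized categories. The two ingredients I need are: (a) the adjunction $(Lr_{i,\#}, r_i^*)$ still holds after passing to $DM^{eff}_-$, and (b) the objects $\zz_{tr}(Y,i)$ still form a family of generators of $DM^{eff}_-({\cal X})$.

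For (a), this was already noted in the remark following Lemma \ref{resp}: by Lemma \ref{resp} parts (2) and (3), both $r_i^*$ and $Lr_{i,\#}$ respect $\af$-equivalences, so they descend to functors between the localizations, and their adjunction survives the localization (as in \cite[Prop. 2.6.2]{Delnotes}). For (b), the objects $\zz_{tr}(Y,i)$ generate $D({\cal X})$ (as was used in the proof of Lemma \ref{isconserv}), and the localization functor $D({\cal X})\sr DM^{eff}_-({\cal X})$ is essentially surjective, so their images still form a generating family of the triangulated category $DM^{eff}_-({\cal X})$.

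With these two facts in place, the proof is a direct copy of the proof of Lemma \ref{isconserv}: assume $r_i^*(K)\cong 0$ in $DM^{eff}_-({\cal X}_i)$ for every $i$; then for any $(Y,i)$ and any $n\in\zz$, adjunction gives
$$Hom_{DM^{eff}_-({\cal X})}(\zz_{tr}(Y,i),K[n]) = Hom_{DM^{eff}_-({\cal X}_i)}(\zz_{tr}(Y),r_i^*(K)[n]) = 0.$$
Since the $\zz_{tr}(Y,i)$ generate $DM^{eff}_-({\cal X})$, this forces $K\cong 0$.

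I do not expect any real obstacle here; the only minor point that might deserve a sentence of explanation is the descent of the adjunction to the localization, but this is standard (and already flagged by the author in the preceding paragraph). The argument is essentially formal, with all the work having been done in Lemma \ref{resp} and in the proof of Lemma \ref{isconserv}.
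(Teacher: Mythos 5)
Your proposal is correct and coincides with the paper's intended argument: the paper's proof of Lemma \ref{isconserv2} is literally ``Same as in Lemma \ref{isconserv}'', relying on the descent of the adjunction $(Lr_{i,\#},r_i^*)$ to the localized categories (noted just before the lemma, with reference to \cite{Delnotes}) and on the fact that the objects $\zz_{tr}(Y,i)$ still generate $DM^{eff}_{-}({\cal X})$. Your write-up merely makes these two points explicit, which is a faithful expansion of the same proof.
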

\begin{proof}
Same as in Lemma \ref{isconserv}.
\end{proof}
\begin{proposition}
\llabel{a1oo}
The tensor product $\stackrel{L}{\oo}$ respects $\af$-equivalences.
\end{proposition}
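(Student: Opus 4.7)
The plan is to recast the proposition as the statement that $W({\cal X})$ is a tensor ideal in $D({\cal X})$. Since $-\stackrel{L}{\oo}L$ is a triangulated endofunctor (Proposition \ref{mayoo0}), the cone of $u\stackrel{L}{\oo}L$ coincides with $(\mathrm{cone}\,u)\stackrel{L}{\oo}L$, so the proposition amounts to showing that $K\stackrel{L}{\oo}L\in W({\cal X})$ whenever $K\in W({\cal X})$.

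The functor $-\stackrel{L}{\oo}L$ preserves direct sums, direct summands, and distinguished triangles, and by definition $W({\cal X})$ is the smallest class with these closure properties containing $\bigcup_i r_{i,\#}(W^{el}({\cal X}_i))$. Thus I would first reduce to $K=r_{i,\#}(E')$ with $E'$ one of the two elementary generators of $W^{el}({\cal X}_i)$ --- the Mayer--Vietoris complex of an upper distinguished square $(W,U,V,Y)$ in $Sm/{\cal X}_i$, or the $\af$-homotopy complex $\zz_{tr}(Y\times\af)\to\zz_{tr}(Y)$. The analogous reduction applied to the second factor, using that the representables $\zz_{tr}(Z,j)$ generate $D({\cal X})$ (cf.\ the proof of Lemma \ref{isconserv}), reduces the problem to showing $r_{i,\#}(E')\stackrel{L}{\oo}\zz_{tr}(Z,j)\in W({\cal X})$.

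At this point I would compute the tensor product componentwise using (\ref{need6}): the $k$-th component is a direct sum over pairs $(\alpha\colon[i]\to[k],\beta\colon[j]\to[k])$ of the elementary complex in $W^{el}({\cal X}_k)$ obtained from $E'$ by smooth base change to $(Y\times_\alpha{\cal X}_k)\times_{{\cal X}_k}(Z\times_\beta{\cal X}_k)$ --- upper distinguished squares and $\af$-homotopies being stable under smooth base change. To lift this level-wise information to membership in $W({\cal X})$, I would group the pairs $(\alpha,\beta)$ by the minimal joint image $[k_0]\hookrightarrow[k]$ of $\alpha\cup\beta$, realizing the total complex as a filtered complex whose associated graded pieces are $r_{k_0,\#}$-pushforwards of elements of $W^{el}({\cal X}_{k_0})$, and therefore lie in $W({\cal X})$ by closure under direct sums and extensions.

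The hard part is this last bookkeeping step: the grouping by minimal cospan is not a literal direct sum, because a representable $\zz_{tr}(M,k_0)$ also contributes at level $k$ through the non-injective simplicial maps $[k_0]\to[k]$, whose image is strictly smaller than $[k_0]$. One therefore needs to organize the decomposition as a skeletal filtration indexed by the dimension of the minimal joint image of $(\alpha,\beta)$, and extract $r_{k_0,\#}$-pushforwards of elementary complexes as associated graded pieces; closure of $W({\cal X})$ under direct sums and cones then finishes the proof.
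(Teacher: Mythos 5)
The reductions in your first two paragraphs are acceptable (for the reduction in the second factor you need ``generate'' in the build-up sense, i.e.\ that every object of $D({\cal X})$ is a homotopy colimit of finite extensions of sums of representables via the brutal truncations of $Lres(L)$ --- the zero-detection property used in the proof of Lemma \ref{isconserv} is not literally what is needed, but this is repairable). The genuine gap is the final step. The associated graded pieces of your filtration by the size of the joint image are \emph{not} direct sums of $r_{k_0,\#}$-pushforwards of elementary complexes: the level-$k$ component of $r_{k_0,\#}(\zz_{tr}(M))=\zz_{tr}(M,k_0)$ is, by (\ref{need6}), a sum over \emph{all} maps $[k_0]\sr[k]$, whereas your graded piece retains only the injective ones. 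What the filtration actually produces is the analogue of $\Delta^{k_0}/\partial\Delta^{k_0}$ rather than $\Delta^{k_0}$: a quotient of the pushforward by the span of the components indexed by degenerate maps, and showing that such quotients (equivalently, the boundary parts) are built from pushforwards of elementary complexes requires an explicit simplicial decomposition of $h_{(Y,i)}\times h_{(Z,j)}$ --- precisely the object the paper remarks has no known nice description. You notice this defect yourself, but the closing sentence merely restates the claim to be proved; as written the argument is incomplete exactly at the step you flag as the hard part.

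The difficulty disappears once you notice that the proposition does not require the stronger statement that $W({\cal X})$ is a tensor ideal in $D({\cal X})$: for the tensor product to descend to the localization it suffices that $K\stackrel{L}{\oo}L$ be \emph{zero in} $DM^{eff}_{-}({\cal X})$ for $K\in r_{i,\#}(W^{el}({\cal X}_i))$ and arbitrary $L$. This is how the paper argues, and your own componentwise computation already contains the needed input: by Lemma \ref{rioo} one has $r_j^*(K\stackrel{L}{\oo}L)=r_j^*(K)\stackrel{L}{\oo}r_j^*(L)$, by (\ref{rirj}) one has $r_j^*(K)=\oplus_{\phi}L{\cal X}_{\phi}^*(E')\in W({\cal X}_j)$ (smooth base change of elementary complexes), and since over a single scheme the tensor product respects $\af$-equivalences this vanishes in $DM^{eff}_{-}({\cal X}_j)$ for every $j$; the conservativity of the family $r_j^*$ on $DM^{eff}_{-}$ (Lemma \ref{isconserv2}) then gives $K\stackrel{L}{\oo}L\cong 0$ in $DM^{eff}_{-}({\cal X})$, with no levelwise bookkeeping, no reduction to representable $L$, and no combinatorics of products of simplices. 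If you do want the stronger tensor-ideal statement, you must actually construct the shuffle/skeletal decomposition and run an induction over it; that is a genuine additional argument, not a formality.
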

\begin{proof}
It is enough to show that for $K\in r_{i,\#}(W^{el}({\cal X}_i))$ and
any $L$ the object  $K\stackrel{L}{\oo} L$ is zero in
$DM^{eff}_{-}({\cal X})$. By Lemma \ref{isconserv2} it is sufficient
to show that $r_{j}^*(K)\cong 0$ for all $j$. This follows immediately
from Lemma \ref{rioo} and (\ref{rirj}).
\end{proof} 
By Proposition \ref{a1oo}, the tensor structure on $D({\cal X})$ defines a
tensor structure on $DM^{eff}_{-}({\cal X})$. Since any distinguished
triangle in $DM^{eff}_{-}({\cal X})$ is, by definition, isomorphic to
the image of a distinguished triangle in $D({\cal X})$, Proposition
\ref{mayoo} implies immediately the following result.
\begin{proposition}
\llabel{mayoo} The axioms (TC1)-(TC3) of \cite{MayTT} hold for
$DM^{eff}_{-}({\cal X})$.
\end{proposition}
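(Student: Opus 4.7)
The plan is to transport the axioms from $D({\cal X})$ through the localization functor $Q\colon D({\cal X})\sr DM^{eff}_{-}({\cal X})$. By Proposition \ref{a1oo}, the bifunctor $\stackrel{L}{\oo}$ takes $\af$-equivalences to $\af$-equivalences in each variable, so it descends to a bifunctor on $DM^{eff}_{-}({\cal X})$. The associativity, commutativity and unit constraints produced by Proposition \ref{mayoo0} are natural isomorphisms in $D({\cal X})$ and therefore descend to natural isomorphisms in the localization; the coherence pentagons, hexagons and unit triangles continue to commute because they already commuted upstairs. Hence $DM^{eff}_{-}({\cal X})$ carries a symmetric monoidal structure for which $Q$ is strict symmetric monoidal.

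The axioms (TC1)--(TC3) of \cite{MayTT} assert compatibilities of this monoidal structure with the triangulated structure: that tensoring a distinguished triangle with any object yields a distinguished triangle, that the associators and the symmetry isomorphism are compatible with suspension, and that certain diagrams built from tensoring two distinguished triangles admit a compatible third distinguished triangle. The key remark, stated in the paragraph preceding the proposition, is that every distinguished triangle in $DM^{eff}_{-}({\cal X})$ is isomorphic to $Q$ applied to some distinguished triangle in $D({\cal X})$. For any instance of (TC1)--(TC3), I would therefore lift all the distinguished triangles involved to $D({\cal X})$, invoke Proposition \ref{mayoo0} to obtain the required morphisms/diagrams there, and then apply $Q$, which carries them to morphisms/diagrams in $DM^{eff}_{-}({\cal X})$ witnessing the same axiom.

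The only delicate point is to make sure that the auxiliary morphisms appearing in (TC1)--(TC3) (not just the triangles, but the connecting maps between different tensor products of triangles) can be simultaneously lifted. This is handled by the calculus-of-fractions description of the localization: Lemma \ref{resp} and Proposition \ref{a1oo} show that the $\af$-equivalences are stable under the standard operations, so any finite diagram in $DM^{eff}_{-}({\cal X})$ can be represented, after replacing objects by isomorphic ones, by an actual diagram in $D({\cal X})$. I expect this last bookkeeping step to be the main obstacle; once it is in place, Proposition \ref{mayoo0} closes each of (TC1)--(TC3) by a single application of $Q$.
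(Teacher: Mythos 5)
Your argument is the same as the paper's: Proposition \ref{a1oo} lets the tensor structure descend to $DM^{eff}_{-}({\cal X})$, and since every distinguished triangle there is by definition isomorphic to the image of one in $D({\cal X})$, the axioms (TC1)--(TC3) are inherited directly from Proposition \ref{mayoo0}. Your extra remarks about lifting auxiliary morphisms via the localization are a more cautious elaboration of the same one-line descent argument the paper gives, so the proposal is correct and matches the paper's proof.
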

\begin{proposition}
\llabel{summands}
The category $DM^{eff}_{-}({\cal X})$ is Karoubian i.e. projectors in
this category have kernels and images.
\end{proposition}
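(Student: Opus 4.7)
The plan is to invoke the Bökstedt–Neeman idempotent splitting criterion: any triangulated category admitting countable coproducts is Karoubian. The splitting is produced by a telescope construction — given an idempotent $p:K\to K$, one forms the countable sum $T=\bigoplus_{n\ge 0}K$ with the ``shift by $p$'' map $s:T\to T$ that sends the $n$-th summand to the $(n+1)$-st via $p$, completes $1-s$ to a distinguished triangle, and identifies the cone with the image of $p$; applying the same construction to the complementary idempotent $1-p$ yields the kernel.

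First I would verify that $DM^{eff}_{-}({\cal X})$ admits the countable coproducts needed for this argument. For a bounded above complex $K$ of presheaves with transfers, the componentwise countable sum $\bigoplus_{n\in\nn}K$ is again bounded above (the upper bound of $K$ is preserved termwise), and via the equivalence of Lemma \ref{ident} this gives a coproduct in $D({\cal X})$ — the only issue being the ``takes infinite direct sums to products'' condition on $PST({\cal X})$, which is verified for sums of copies of a fixed object by routine bookkeeping against objects of $SmCor({\cal X})$.

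Second, I would check that these coproducts descend to the localization. The class $W({\cal X})$ is closed under direct sums by its very construction as the smallest class containing the $W^{el}_i({\cal X})$ with this closure property, so the localization functor $D({\cal X})\to DM^{eff}_{-}({\cal X})$ preserves direct sums; in particular the countable coproducts required by Bökstedt–Neeman exist in $DM^{eff}_{-}({\cal X})$.

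Finally I would carry out the telescope construction as above, using Proposition \ref{mayoo} to obtain the distinguished triangle on $1-s$, and argue (as in the standard proof of Neeman's lemma) that $p^{2}=p$ forces $K\cong Y\oplus Z$ with $p$ realized as the composition of projection and inclusion onto the summand $Y=\mathrm{cone}(1-s)$. The main obstacle is the verification in step one — namely handling the ``direct sums to products'' constraint in the definition of $PST({\cal X})$ carefully enough to guarantee that the componentwise formula genuinely represents the categorical coproduct in $D({\cal X})$; once that is cleared, the descent to $DM^{eff}_{-}({\cal X})$ and the telescope argument are formal.
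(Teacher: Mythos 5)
Your proposal is correct and follows essentially the same route as the paper: the paper's proof consists precisely of observing that countable direct sums $\oplus_{i=1}^{\infty}K$ exist in $DM^{eff}_{-}({\cal X})$ and then invoking Lemma \ref{neegen}, the generalization of Neeman's Prop.\ 1.6.8, whose proof is exactly the B\"okstedt--Neeman telescope splitting of an idempotent that you describe. Your extra verifications (termwise coproducts of bounded-above complexes, closure of $W({\cal X})$ under direct sums so that the localization inherits countable coproducts) are just an expansion of what the paper dismisses as ``for obvious reasons.''
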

\begin{proof}
For any $K$ in $DM^{eff}_{-}({\cal X})$ the countable direct sum
$\oplus_{i=1}^{\infty} K$ exists in $DM^{eff}_{-}({\cal X})$ for
obvious reasons. This implies the statemnt of the proposition in view
of the following easy generalization of \cite[Prop.1.6.8 p.65]{Nee}.
\end{proof}
\begin{lemma}
\llabel{neegen}
Let $D$ be a triangulated category such that for any object $K$ in $D$
the countable direct sum $\oplus_{i=1}^{\infty} K$ exists. Then $D$ is
Karoubian.
\end{lemma}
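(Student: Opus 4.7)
The plan is to carry out the Eilenberg-swindle / Milnor-telescope argument of \cite[Prop.~1.6.8]{Nee}, whose only ingredient is the existence of countable coproducts. Given a projector $e: K \to K$ in $D$, set $f := 1_K - e$; both are projectors, orthogonal in the sense that $ef = fe = 0$. Form $M := \bigoplus_{i \geq 1} K$. The goal is to construct objects $Y$ and $Z$ serving as the image and kernel of $e$, together with an isomorphism $K \cong Y \oplus Z$ in which $e$ corresponds to the first projection.

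To build $Y$, let $\sigma_e: M \to M$ be the direct sum, over $i \geq 1$, of the map $e$ sending the $i$-th summand of $M$ into the $(i+1)$-th, and complete $1_M - \sigma_e$ to a distinguished triangle $M \xrightarrow{1_M - \sigma_e} M \xrightarrow{\pi_e} Y \to M[1]$. Build $Z$ analogously from $f$. Take $\iota_e: K \to Y$ to be the inclusion $\iota_1: K \hookrightarrow M$ of the first summand composed with $\pi_e$. The morphism $\epsilon_e: M \to K$ equal to $e$ on every summand satisfies $\epsilon_e (1_M - \sigma_e) = 0$ because $e^2 = e$, so it factors (not necessarily uniquely) through $\pi_e$ as some $\rho_e: Y \to K$; build $\iota_f, \rho_f$ symmetrically. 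A direct computation yields $\rho_e \iota_e = e$, $\rho_f \iota_f = f$, and, using $ef = fe = 0$, also $\rho_e \iota_f = \rho_f \iota_e = 0$. Hence $(\rho_e, \rho_f) \circ (\iota_e, \iota_f) = e + f = 1_K$, so $K$ is a retract of $Y \oplus Z$.

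The remaining delicate step is to show that the reverse composition is also the identity, i.e.\ $\iota_e \rho_e = 1_Y$ and $\iota_f \rho_f = 1_Z$; this is the heart of the swindle and the only place where the countable coproduct is used essentially. Concretely, one produces a morphism $h: M \to M$ satisfying $(1_M - \sigma_e) h = \iota_1 \epsilon_e - 1_M$ by a lower-triangular recipe on the summands that exploits the fact that $\sigma_e$ shifts indefinitely. Composing with $\pi_e$ gives $\pi_e \iota_1 \epsilon_e = \pi_e$, and together with an appropriate choice of the factorization $\rho_e$ this forces $\iota_e \rho_e = 1_Y$. The main obstacle is precisely this last verification: eliminating the ambiguity in $\rho_e$ (coming from $\mathrm{Hom}(M[1], K)$) so that it serves as a two-sided inverse, not merely a one-sided retraction, for $\iota_e$. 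Once this is settled, $K \cong Y \oplus Z$ splits $e$, and $D$ is Karoubian.
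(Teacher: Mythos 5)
Your construction is the one the paper intends (it simply cites Neeman's Prop.~1.6.8, i.e.\ exactly this telescope argument), but as written the proof is incomplete at precisely the point you flag, and the flag is not a removable formality. From $\rho_e\pi_e=\epsilon_e$ and $\pi_e\iota_1\epsilon_e=\pi_e$ you only get $(\iota_e\rho_e-1_Y)\pi_e=0$, hence $\iota_e\rho_e=1_Y+c\circ\delta$ where $\delta\colon Y\to M[1]$ is the connecting morphism of the defining triangle; replacing $\rho_e$ by $\rho_e+d\circ\delta$ changes $\iota_e\rho_e$ by $\iota_e d\delta$, and nothing in your argument shows that this ambiguity can be used to kill $c\circ\delta$. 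So ``an appropriate choice of the factorization $\rho_e$ forces $\iota_e\rho_e=1_Y$'' is an assertion, not an argument: the heart of the swindle is exactly what is missing. (A minor point: the cross terms $\rho_e\iota_f$, $\rho_f\iota_e$ are not composable, since $\iota_f$ lands in $Z$ and $\rho_e$ starts from $Y$; but they are not needed, as $\rho_e\iota_e+\rho_f\iota_f=1_K$ already gives the retraction.)

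The missing idea --- and the place where $e^2=e$ is really used --- is that $1_M-\sigma_e$ is a \emph{split monomorphism}. Define $h\colon M\to M$ on the $j$-th summand as the map into $\bigoplus_{i\le j}K$ whose $i$-th component is $-e$ for $i<j$ and $1_K-e$ for $i=j$ (each column is finite, so $h$ is a legitimate morphism out of the coproduct); a telescoping computation using $e^2=e$ gives $h\circ(1_M-\sigma_e)=1_M$. The pair $(1_M,h)$ then extends to a morphism of triangles to $M\xrightarrow{1}M\to 0\to M[1]$, which forces $\delta=0$. Consequently (i) the factorization $\rho_e$ is \emph{unique}, and (ii) $\pi_e$ admits a section $\sigma\colon Y\to M$ with $\pi_e\sigma=1_Y$. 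Now your identity $\pi_e\iota_1\epsilon_e=\pi_e$ finishes the argument: $\iota_e\rho_e=\iota_e\rho_e\pi_e\sigma=\pi_e\iota_1\epsilon_e\sigma=\pi_e\sigma=1_Y$, which together with $\rho_e\iota_e=e$ splits $e$ through $Y$ (the $Z$-half is then not even needed). Equivalently, in Neeman's packaging one treats $Y\oplus Z$ at once: the maps $1-\sigma_{e\oplus f}$, the augmentation given by $(e,f)$ on each summand, its section, and the analogous $h$ exhibit $\bigoplus_i(K\oplus K)$ as a biproduct of $\bigoplus_i(K\oplus K)$ and $K$, so the cone $Y\oplus Z$ is isomorphic to $K$ compatibly with the structure maps.
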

\begin{proof}
Same as the proof of \cite[Prop.1.6.8 p.65]{Nee}.
\end{proof}

\subsection{Relative Tate motives}
\llabel{ss4}
For any $S$ we may define the Tate objects $\zz(p)[q]$ in
$DM_{-}^{eff}(S)$ in the same way they were defined in
\cite[p.192]{collection} for $S=Spec(k)$. For $\cal X$ over $S$ we
define the Tate objects $\zz(p)[q]$ in $DM_{-}^{eff}({\cal X})$ as
$Lc^*(\zz(p)[q])$. Note that this definition does not depend on $S$ -
one may always consider $\cal X$ as a simplicial scheme over
$Spec(\zz)$ and lift the Tate objects from $Spec(\zz)$. We denote by
$DT({\cal X})$ the thick subcategory in $DM^{eff}_{-}({\cal X})$
generated by Tate objects i.e. the smallest subcategory which is
closed under shifts, triangles and direct summands and contains
$\zz(i)$ for all $i\ge 0$. One should properly call it the
triangulated category of effective Tate motives of finite type over
$\cal X$ but we will call it simply the category of Tate motives over
$\cal X$. When $\cal X$ is clear from the context we will write $DT$
instead of $DT({\cal X})$.

The subcategory $DT({\cal X})$ is clearly closed under the tensor
product and Proposition \ref{mayoo}
implies that $DT({\cal X})$ is a tensor triangulated category
satisfying May's axiom $TC3$.
\begin{remark}\rm
The category $DT({\cal X})$ does not coincide in general with the
triangulated subcategory generated in $DM^{eff}_{-}({\cal X})$ by Tate
objects. Consider for example the case when ${\cal X}={\cal X}_1\amalg
{\cal X}_2$ and both ${\cal X}_1$ and ${\cal X}_2$ are non-empty. Then
the constant presheaf with transfers $\zz$ is a direct sum of
$\zz_{1}$ and $\zz_2$ where $\zz_i$ is the constant presheaf with
transfers on ${\cal X}_i$. One can easily show that $\zz_i$'s are not
in the triangulated subcategory generated by Tate objects. 

However, one can show that the problem demonstrated by this example is
the only possible one - if $H^{0}({\cal X},\zz)$ is $\zz$ then the
triangulated subcategory in $DM^{eff}_{-}({\cal X})$ generated by Tate
objects  is closed under directs summands and therefore coincides with
$DT({\cal X})$.
\end{remark}
For $M$ in $DT$ we denote as usually by $H_{*,*}(M)$ the groups
$$H_{p,q}(M)=\left\{
\begin{array}{ll}
Hom(\zz,M(-q)[-p])&\mbox{\rm for $q\le 0$}\\
Hom(\zz(q)[p], M)&\mbox{\rm for $q\ge 0$}
\end{array}
\right.
$$
and by $H^{*,*}(M)$ the groups
$$H^{p,q}(M)=\left\{
\begin{array}{ll}
Hom(M,\zz(q)[p])&\mbox{\rm for $q\ge 0$}\\
0&\mbox{\rm for $q< 0$}
\end{array}
\right.
$$
\begin{lemma}
\llabel{start}
Let $f:M\sr M'$ be a morphism in $DT$ which defines isomorphisms on the
groups $H_{p,q}(-)$ for $q\ge 0$. Then $f$ is an isomorphism.
\end{lemma}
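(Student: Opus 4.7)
The plan is to show the cone $C$ of $f$ vanishes, using that $DT$ is by definition the thick subcategory generated by the Tate objects $\zz(i)$, $i\ge 0$.

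First, complete $f$ to a distinguished triangle $M\xrightarrow{f} M'\to C\to M[1]$ in $DT({\cal X})$. Applying the cohomological functor $Hom(\zz(q)[p],-)$ yields the long exact sequence
\begin{equation*}
\cdots\to H_{p,q}(M)\to H_{p,q}(M')\to Hom(\zz(q)[p],C)\to H_{p-1,q}(M)\to\cdots
\end{equation*}
for each $q\ge 0$ and each $p\in\zz$. By hypothesis $f_*$ is an isomorphism on $H_{*,q}$ for all $q\ge 0$, so the five-lemma (or direct inspection) gives $Hom(\zz(q)[p],C)=0$ for every $p\in\zz$ and every $q\ge 0$.

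Next I would reduce to the generation statement: the cone $C$ is zero. Consider the full subcategory
\begin{equation*}
{\cal T}:=\{T\in DT({\cal X})\mid Hom(T[p],C)=0\mbox{ for all }p\in\zz\}.
\end{equation*}
This subcategory is closed under shifts, direct summands, and (by the long exact sequence applied to a distinguished triangle) under cones/triangles — hence it is a thick triangulated subcategory of $DT({\cal X})$. By the previous paragraph, ${\cal T}$ contains $\zz(q)$ for every $q\ge 0$. Since $DT({\cal X})$ is by definition the smallest such thick subcategory generated by the Tate objects $\zz(q)$, $q\ge 0$, we conclude ${\cal T}=DT({\cal X})$. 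In particular $C\in{\cal T}$, so $Hom(C,C)=0$ and therefore $C\cong 0$, which means $f$ is an isomorphism.

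There is no real obstacle here; the only point to be careful about is that the hypothesis is stated only for $q\ge 0$ (and all $p$), matching exactly the generators $\zz(q)[p]$ of the thick subcategory $DT({\cal X})$. The argument fails if one tried to use the same strategy inside the larger category $DM^{eff}_-({\cal X})$, where $DT({\cal X})$ need not generate; but since $C$ already lies in $DT({\cal X})$, the generation argument internal to $DT({\cal X})$ suffices.
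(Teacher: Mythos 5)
Your proof is correct and follows essentially the same strategy as the paper's: reduce to the generators $\zz(q)[p]$ via a thick-subcategory argument in $DT({\cal X})$. The paper phrases it without the cone --- it observes that the class of $N$ with $Hom(N[p],M)\sr Hom(N[p],M')$ bijective for all $p$ is thick, contains all $\zz(q)$, hence is all of $DT$, and concludes by Yoneda --- but this is the same argument in a slightly different dress.
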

\begin{proof}
For a given $f$, the class of $N$ such that the maps
$$Hom(N[p],M)\sr Hom(N[p],M')$$
are isomorphisms for all $p$ is a thick subcategory of $DT$.  
Our condition means that it contains all $\zz(q)$. Therefore it
coincides with the whole $DT$ and we conclude that $f$ is an
isomorphism by Yoneda Lemma.
\end{proof}
Let $\cal X$ be a smooth simplicial scheme over $S$. For such an $\cal
$ we define $M({\cal X})$ as the object in $DM_{-}^{eff}(S)$ given by
the complex $\zz_{tr}({\cal X})$ associated with the simplicial object
${\cal X}$ in $SmCor(S)$. Note that this definition is compatible with
the definition of motives of smooth simplicial schemes given in
\cite{MCnew}.
\begin{proposition}
\llabel{main1} For ${\cal X}$ as above, there are natural
isomorphisms:
$$Hom_{DM({\cal X})}(\zz(q')[p'],
\zz(q)[p])=Hom_{DM(S)}(M({\cal X})(q')[p'], \zz(q)[p]))$$
\end{proposition}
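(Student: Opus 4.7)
The plan is to apply adjunction together with the projection formula \ref{projform}. By definition of the Tate objects on $\mathcal{X}$, we have $\zz(q')[p'] = Lc^*(\zz(q')[p'])$ and $\zz(q)[p] = Lc^*(\zz(q)[p])$, where on the right we mean the Tate objects over $S$. So the Hom-group on the left of the claimed isomorphism is
$$Hom_{DM(\mathcal{X})}(Lc^*(\zz(q')[p']), Lc^*(\zz(q)[p])).$$

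First I would invoke the adjunction $(Lc_{\#}, Lc^*)$, which by Lemma \ref{resp} (parts 4, 5) descends to the $\af$-localized categories and so yields an adjunction between $DM^{eff}_{-}(\mathcal{X})$ and $DM^{eff}_{-}(S)$. Applying this adjunction to the first argument rewrites the above group as
$$Hom_{DM(S)}(Lc_{\#} Lc^*(\zz(q')[p']),\, \zz(q)[p]).$$

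Next I would compute $Lc_{\#} Lc^*(\zz(q')[p'])$ using Proposition \ref{projform}, which gives
$$Lc_{\#} Lc^*(\zz(q')[p']) \;\cong\; \zz_{tr}(\mathcal{X})_*\stackrel{L}{\oo}\zz(q')[p'] \;=\; M(\mathcal{X})(q')[p'],$$
using the definition $M(\mathcal{X}) = \zz_{tr}(\mathcal{X})_*$ and that tensoring with $\zz(q')[p']$ over $S$ implements the Tate twist-and-shift. Substituting this into the previous display gives the desired isomorphism $Hom_{DM(S)}(M(\mathcal{X})(q')[p'],\zz(q)[p])$, and the isomorphism is natural because the adjunction unit/counit and the projection formula isomorphism are both natural.

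The only potentially delicate point is ensuring that everything descends cleanly through the $\af$-localization: we need the adjunction to persist on $DM^{eff}$ (assured by Lemma \ref{resp} together with the standard localization-of-adjoint-functors lemma referenced after Lemma \ref{resp}), and we need Proposition \ref{projform}, which was originally an identity in $D(\mathcal{X})$, to remain valid in $DM^{eff}_{-}(\mathcal{X})$, which is automatic since the localization functor is triangulated and symmetric monoidal by Proposition \ref{a1oo}. Beyond these formal checks there is no computational content to carry out.
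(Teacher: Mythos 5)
Your argument is correct and is essentially the paper's own proof: identify the Tate objects over $\cal X$ as $c^*$ (equivalently $Lc^*$, since $\cal X$ is smooth over $S$) of the Tate objects over $S$, apply the $(Lc_{\#},c^*)$ adjunction, and compute $Lc_{\#}c^*(\zz(q')[p'])\cong M({\cal X})(q')[p']$ via Proposition \ref{projform}. The extra remarks about descent of the adjunction and of the projection formula through the $\af$-localization are exactly the formal points the paper relies on, so nothing is missing.
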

\begin{proof}
We have by adjunction
$$Hom_{DM({\cal X})}(\zz(p')[q'],
\zz(q)[p])=Hom_{DM({\cal X})}(c^*\zz(q')[p'], c^*\zz(q)[p]))=$$
$$=Hom_{DM(S)}(Lc_{\#}c^*\zz(q')[p'], \zz(q)[p])$$
and Proposition \ref{projform} implies that for any $M$ in
$DM^{eff}_{-}(S)$ one has
$$Lc_{\#}c^*(M)=M({\cal X})\oo M.$$
\end{proof}
\begin{cor}
\llabel{noneg}
For $\cal X$ as above and any $i>0$ one has 
$$Hom_{DM({\cal X})}(\zz,\zz[-i])=0$$
\end{cor}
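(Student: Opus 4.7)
My plan is to reduce the vanishing, via Proposition \ref{main1}, to a standard vanishing of weight-zero motivic cohomology for the simplicial scheme ${\cal X}$, and then establish that vanishing by exploiting the simplicial structure of $M({\cal X})$. Setting $p'=q'=q=0$ and $p=-i$ in Proposition \ref{main1} yields
$$Hom_{DM({\cal X})}(\zz,\zz[-i])=Hom_{DM^{eff}_{-}(S)}(M({\cal X}),\zz[-i]),$$
so it suffices to show that $H^{-i,0}({\cal X},\zz)=0$ for $i>0$.

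Next I would use that $M({\cal X})=\zz_{tr}({\cal X})_*$ is the complex associated to the simplicial object $\zz_{tr}({\cal X}_\bullet)$ in $SmCor(S)$ and is concentrated in non-positive cohomological degrees, with $\zz_{tr}({\cal X}_n)$ in degree $-n$. The skeletal filtration produces a spectral sequence
$$E_1^{p,q}=Hom_{DM^{eff}_{-}(S)}(\zz_{tr}({\cal X}_p),\zz[q])\Longrightarrow Hom_{DM^{eff}_{-}(S)}(M({\cal X}),\zz[p+q])$$
with $p\ge 0$. The $E_1$-term is weight-zero motivic cohomology of the smooth scheme ${\cal X}_p$, which coincides with ordinary Nisnevich cohomology $H^q_{Nis}({\cal X}_p,\zz)$ of the constant sheaf $\zz$, since $\zz$ is both $\af$-homotopy invariant and Nisnevich-local as a constant sheaf and hence is $\af$-local in $D(PST(S))$. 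In particular $E_1^{p,q}=0$ for $q<0$. If $p+q=-i<0$ with $p\ge 0$, then $q<0$, so the relevant $E_1$-column vanishes and the abutment $Hom_{DM^{eff}_{-}(S)}(M({\cal X}),\zz[-i])$ is zero.

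The main obstacle I anticipate is justifying convergence of this spectral sequence, since $M({\cal X})$ is unbounded below. A slick alternative that sidesteps this difficulty is to use the $\af$-locality of $\zz$ directly: inverting $\af$-equivalences does not affect $Hom$ into $\af$-local targets, so
$$Hom_{DM^{eff}_{-}(S)}(M({\cal X}),\zz[-i])=Hom_{D(PST(S))}(M({\cal X}),\zz[-i]).$$
Because each $\zz_{tr}({\cal X}_n)$ is projective in $PST(S)$ and $M({\cal X})^k=0$ for $k>0$, the right-hand side equals the $(-i)$-th cohomology of the Hom-complex $k\mapsto Hom_{PST(S)}(\zz_{tr}({\cal X}_k),\zz)$, which is concentrated in non-negative degrees and therefore vanishes for $i>0$.
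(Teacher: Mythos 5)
Your proof is correct and follows the route the paper intends: Corollary \ref{noneg} is stated as an immediate consequence of Proposition \ref{main1}, and your reduction followed by the vanishing of $Hom_{D(S)}(M({\cal X}),\zz[-i])$ --- using that the constant presheaf with transfers $\zz$ is $\af$-local (the same facts about $H^i_{Nis}$ and homotopy invariance that the paper invokes in the proof of Proposition \ref{dt00}) and that $M({\cal X})$ is a bounded-above complex of projective (representable) objects concentrated in non-positive degrees --- is exactly the implicit argument. The spectral-sequence detour is unnecessary, and your final direct computation of the Hom-complex is the clean, complete justification.
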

Combining Proposition \ref{main1} with the Cancellation Theorem
\cite{cancellation} we get the following result.
\begin{cor}
\llabel{cancell}
Let now $\cal X$ be a smooth simplicial scheme over a perfect field
$k$. Then one has
$$
Hom_{DM({\cal X})}(\zz(q')[p'],
\zz(q)[p])=$$
$$=\left\{
\begin{array}{cc}
0 &\mbox{\rm for $q<q'$}\\
Hom_{DM({\cal X})}(\zz,
\zz(q-q')[p-p'])&\mbox{\rm for $q\ge q'$}
\end{array}
\right.
$$
\end{cor}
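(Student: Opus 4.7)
The plan is to combine Proposition \ref{main1} with the Cancellation Theorem of \cite{cancellation}, which says that for a perfect field $k$ and any effective motives $A,B \in DM^{eff}_-(k)$, the functor $-\otimes\zz(1)$ induces an isomorphism $Hom(A,B) \xrightarrow{\sim} Hom(A(1),B(1))$.

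First, by Proposition \ref{main1} applied to $\cal X$ over $k$, we rewrite
$$Hom_{DM({\cal X})}(\zz(q')[p'],\zz(q)[p]) = Hom_{DM(k)}(M({\cal X})(q')[p'],\zz(q)[p]).$$
Now I would iterate the Cancellation Theorem to cancel $n := \min(q,q')$ copies of $\zz(1)$ from both arguments of the right-hand Hom. This reduces the identity to a computation split along two cases.

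In the case $q \geq q'$, we have $n = q'$ and iterated cancellation produces
$$Hom_{DM(k)}(M({\cal X})[p'],\zz(q-q')[p]) = Hom_{DM(k)}(M({\cal X}),\zz(q-q')[p-p']),$$
and reapplying Proposition \ref{main1} in the opposite direction (with $q'=p'=0$) identifies this with $Hom_{DM({\cal X})}(\zz,\zz(q-q')[p-p'])$, which is precisely the claimed RHS. This case is essentially bookkeeping.

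The harder case is $q < q'$, where we need to establish vanishing. After cancelling $n = q$ times we arrive at $Hom_{DM(k)}(M({\cal X})(q'-q)[p'],\zz[p])$ with $m := q'-q > 0$, and the task is to show this group is zero. The key point is that, under the fully faithful embedding $DM^{eff}_-(k)\hookrightarrow DM_-(k)$ provided by the Cancellation Theorem (so that $\zz(1)$ becomes invertible), this Hom transforms into $Hom_{DM_-(k)}(M({\cal X}),\zz(-m)[p-p'])$, i.e.\ motivic cohomology of $\cal X$ in negative weight $-m < 0$. For a smooth simplicial scheme this vanishes term-by-term, since for any smooth $X$ over a perfect field one has $H^{*,q}(X,\zz) = 0$ for $q < 0$. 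This vanishing in negative weight is the main obstacle; the rest is formal manipulation. If one prefers to stay within $DM^{eff}_-$, the same conclusion can be phrased as the statement that for any effective $N$ and $m > 0$, $Hom(N(m),\zz) = 0$, which is a standard consequence of cancellation together with the negative-weight vanishing for the generating objects $\zz_{tr}(Y)$ with $Y$ smooth.
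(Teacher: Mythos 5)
Your proposal is correct and follows essentially the same route as the paper, which proves the corollary precisely by combining Proposition \ref{main1} with the Cancellation Theorem of \cite{cancellation}. The paper leaves the details implicit, and your two cases (iterated cancellation for $q\ge q'$, and the vanishing of $Hom(N(m),\zz[*])$ for $m>0$ via negative-weight vanishing on the generators $\zz_{tr}(Y)$ for $q<q'$) are exactly the intended bookkeeping.
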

\begin{remark}\rm
Using the fact that a regular scheme of equal characteristic is the
inverse limit of a system of smooth schemes over a perfect field it is
easy to generalize Corollary \ref{cancell} to smooth simplicial
schemes over regular schemes of equal characteristic. We expect it
hold for all regular simplicial schemes but not for general
(simplicial) schemes.
\end{remark} 
Starting from this point we assume that $S=Spec(k)$ where $k$ is a
perfect field and $\cal X$ is a smooth simplicial scheme over $S$.
\begin{lemma}
\llabel{inthomt}
For any $X$, $Y$ in $DT({\cal X})$ there exists an internal Hom-object
$(Z,e)$ from $X$ to $Y$.
\end{lemma}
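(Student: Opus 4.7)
The plan is to prove existence of the internal Hom-object $(Z,e)$ by a two-step dévissage reducing to the case where both $X$ and $Y$ are Tate objects, where Corollary \ref{cancell} provides an explicit construction.

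First I would verify that the full subcategory $\mathcal{C}\subseteq DT({\cal X})$ consisting of those $X$ such that $\ih(X,Y)$ exists in $DT$ for every $Y\in DT$ is thick. Shifts are immediate from $\ih(X[n],Y)=\ih(X,Y)[-n]$. For triangles, given $X_1\sr X_2\sr X_3\sr X_1[1]$ and internal Homs $(Z_i,e_i)=\ih(X_i,Y)$ for $i=1,2$, the map $X_1\sr X_2$ induces via the universal property of $(Z_1,e_1)$ a canonical morphism $Z_2\sr Z_1$ in $DT$; I complete it to a distinguished triangle $Z_3\sr Z_2\sr Z_1\sr Z_3[1]$ with $Z_3\in DT$ (since $DT$ is a triangulated subcategory) and use an octahedral fill-in to manufacture an evaluation $e_3:Z_3\oo X_3\sr Y$ from $e_1$ and $e_2$. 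The universal property of $(Z_3,e_3)$ then follows by comparing, for any $W\in DT$, the long exact sequence obtained by applying $Hom(W,-)$ to the $Z$-triangle with the long exact sequence obtained by applying $Hom(-,Y)$ to the triangle $W\oo X_1\sr W\oo X_2\sr W\oo X_3$ (exactness of $W\oo -$ is guaranteed by Proposition \ref{mayoo}), and invoking the five-lemma after identifying corresponding terms via the universal property for $i=1,2$. Closure of $\mathcal{C}$ under direct summands is immediate from Karoubian-ness of $DT$ (Proposition \ref{summands}). A symmetric dévissage in the second variable reduces existence of $\ih(X,Y)$ to the case $X=\zz(q)[p]$ and $Y=\zz(q')[p']$, and shifts let me take $p=p'=0$.

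For the base case with $X=\zz(q)$ and $Y=\zz(q')$ I split into two subcases. If $q\leq q'$, set $Z=\zz(q'-q)$ with $e:\zz(q'-q)\oo\zz(q)\stackrel{\sim}{\sr}\zz(q')$ the canonical isomorphism. On any Tate test object $W=\zz(q'')[p'']$, Corollary \ref{cancell} shows that both $Hom(W,\zz(q'-q))$ and $Hom(W\oo\zz(q),\zz(q'))$ equal $Hom(\zz,\zz(q'-q-q'')[-p''])$ when $q''\leq q'-q$ and vanish otherwise, and naturality of the cancellation isomorphism implies that this agreement is realized by $e$. If $q>q'$, set $Z=0$ with $e=0$: then $q+q''>q'$ for every $q''\geq 0$, so Corollary \ref{cancell} gives $Hom(W\oo\zz(q),\zz(q'))=0$ for every Tate $W$. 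To upgrade the universal property from Tate test objects to arbitrary $W\in DT$, I observe that the class of $W$ for which the map $Hom(W,Z)\sr Hom(W\oo X,Y)$ induced by $e$ is bijective is a thick subcategory (closed under shifts, triangles, and direct summands by the five-lemma and Karoubian-ness), and it contains the Tate generators of $DT$, so it equals all of $DT$.

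The main obstacle I anticipate is the triangle-step construction of $e_3$ together with the verification that the induced comparison between the two long exact sequences commutes term-by-term. This is a standard but non-mechanical octahedral argument inside the tensor triangulated category $DT$ satisfying axioms (TC1)-(TC3) of Proposition \ref{mayoo}. Once this is in place, everything else reduces to the cancellation input of Corollary \ref{cancell} and routine dévissage, producing $(Z,e)$ inside $DT({\cal X})$ as required.
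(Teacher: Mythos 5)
Your proposal is correct and follows essentially the same route as the paper: the base case $X=\zz(q)$, $Y=\zz(q')$ handled by Corollary \ref{cancell} (with $(0,0)$ for $q>q'$ and $(\zz(q'-q),e)$ otherwise), followed by d\'evissage over shifts, triangles and direct summands. The triangle step you flag as the main obstacle is precisely Theorem \ref{appmain} (together with Remark \ref{appmain'} for the second variable), which the paper proves in the appendix using May's axiom TC3, so your planned octahedral/TC3 construction of the evaluation $e_3$ is exactly the argument the paper delegates there rather than repeats in the proof of the lemma.
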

\begin{proof}
Consider first the case when $X=\zz(i)$ and $Y=\zz(j)$. Corollary
\ref{cancell} implies immediately that $(0,0)$ is an internal
Hom-object from $\zz(i)$ to $\zz(j)$ for $j<i$. The same corollary
shows that $(\zz(j-i),e)$, where $e$ is the isomorphism
$\zz(j-i)\oo\zz(i)\sr\zz(j)$, is an internal Hom-object from $\zz(i)$
to $\zz(j)$ for $j\ge i$. The fact that $(Z,e)$ exists for arbitrary
$X$ and $Y$ follows now from Theorem \ref{appmain} and the obvious
argument for direct summands. 
\end{proof}
Starting from this point we choose a specification of internal
Hom-objects in $DT({\cal X})$ (see Appendix) such that for
$i\ge j$ one has $\uu{Hom}(\zz(j),\zz(i))=\zz(i-j)$.

Let $DT_{\ge n}$ (resp. $DT_{< n}$) be the thick subcategory in
$DT({\cal X})$ generated by Tate objects $\zz(i)$ for $i\ge n$
(resp. $i< n$). The subcategories $DT_{\ge n}$ form a decreasing filtration
$$\dots \subset DT_{\ge 1}\subset  DT_{\ge 0}=DT({\cal X})$$
and we have
$$\cap_n DT_{\ge n}=0$$
Similarly the subcategories $DT_{< n}$ form an increasing filtration
$$0=DT_{< 0}\subset DT_{< 1}\subset\dots\subset DT({\cal X})$$%
and we have
$$\cup_n DT_{< n}=DT({\cal X})$$
We call these filtrations {\em the slice filtrations} on $DT$ since
they are similar to the slice filtration on the motivic stable
homotopy category. Since we consider here only Tate motives the slice
filtration coincides (up to numbering) with the weight filtration but
for more general motives they are different. 
\begin{lemma}
\llabel{or1}
Let $M$ be such that $H_{*,i}(M)=0$ for all $i\ge n$. Then $M$ lies in
$DT_{< n}$.
\end{lemma}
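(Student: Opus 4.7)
The plan is to proceed by induction on the minimal $N$ with $M \in DT_{<N}$; such $N$ exists because every object of $DT$ is obtained from finitely many Tate generators $\zz(i)$ by iterated triangles, shifts, and summands. When $N \le n$ we have $M \in DT_{<N} \subseteq DT_{<n}$ and are done. The entire content of the induction is therefore the step: if $M \in DT_{<N}$ and $H_{*,N-1}(M) = 0$, then $M \in DT_{<N-1}$. After that, the inductive hypothesis (still applicable since $H_{*,i}(M) = 0$ for all $i \geq n$) finishes the argument.

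To prove this step I would pass to the Verdier quotient $\pi : DT_{<N} \to Q := DT_{<N}/DT_{<N-1}$. Because $\pi(\zz(i)) = 0$ for $i < N-1$, the quotient $Q$ is generated as a thick subcategory by the single object $\pi(\zz(N-1))$. The main technical ingredient is the Cancellation Theorem (Corollary \ref{cancell}), which yields the semi-orthogonality $Hom(\zz(N-1)[p], L) = 0$ for every $L \in DT_{<N-1}$ and every $p$: the vanishing is immediate for $L = \zz(j)$ with $j < N-1$ by cancellation, and the class of $L$ enjoying this property is thick and hence contains $DT_{<N-1}$. Using this, any Verdier roof $\zz(N-1)[p] \leftarrow X \to M$ (with defining cone in $DT_{<N-1}$) has its left leg canonically splitting, and the equivalence relation on roofs similarly collapses, giving
$$Hom_Q(\pi(\zz(N-1)[p]), \pi(M)) = Hom_{DT_{<N}}(\zz(N-1)[p], M) = H_{p,N-1}(M) = 0.$$

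From here the argument closes quickly. The class of $A \in Q$ with $Hom_Q(A, \pi(M)) = 0$ is a thick subcategory of $Q$ containing the generator $\pi(\zz(N-1))$, hence equals $Q$. Applied to $A = \pi(M)$ it forces $id_{\pi(M)} = 0$, so $\pi(M) \cong 0$ in $Q$. Since $DT_{<N-1}$ is thick in $DT_{<N}$, the vanishing of $\pi(M)$ in the Verdier quotient forces $M \in DT_{<N-1}$, completing the inductive step.

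The main obstacle is justifying the simplification of Hom-groups in the Verdier quotient. Without the one-sided vanishing $Hom(\zz(N-1)[p], DT_{<N-1}) = 0$ provided by cancellation, morphisms in $Q$ would genuinely be equivalence classes of roofs, and the hypothesized vanishing of $H_{p,N-1}(M)$ would not translate into the vanishing of $\pi(M)$; this is exactly where the assumption that we are over a perfect field is being used.
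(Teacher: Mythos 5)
Your argument is correct, but it takes a genuinely different route from the paper. The paper's proof is non-inductive: it forms the double dual $\Psi(M)=\uu{Hom}(\uu{Hom}(M,\zz(n-1)),\zz(n-1))$, uses Proposition \ref{homex} and Corollary \ref{cancell} to check that $\Psi(M)$ lies in $DT_{<n}$ for every $M$ and that the natural map $\psi:M\sr\Psi(M)$ is an isomorphism on $H_{*,i}$ for $i<n$, notes that $H_{*,i}(\Psi(M))=0$ for $i\ge n$, and concludes under the hypothesis that $\psi$ is an isomorphism by Lemma \ref{start}, so $M\cong\Psi(M)\in DT_{<n}$. You instead run a descent on the smallest $N$ with $M\in DT_{<N}$, pass to the Verdier quotient $DT_{<N}/DT_{<N-1}$, and use the semi-orthogonality $Hom(\zz(N-1)[p],L)=0$ for $L\in DT_{<N-1}$ (again a consequence of Corollary \ref{cancell}) to identify $Hom$ out of $\pi(\zz(N-1))[p]$ in the quotient with $H_{p,N-1}(M)=0$, forcing $\pi(M)=0$ and hence $M\in DT_{<N-1}$ by thickness; since $N-1\ge n$ whenever $N>n$, the descent terminates at $N\le n$. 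Both proofs ultimately rest on cancellation, i.e. on the perfect-field hypothesis, exactly as you observe. What your route buys is economy of machinery: it needs no internal Hom-objects at all, hence none of Lemma \ref{inthomt}, Theorem \ref{appmain} or Proposition \ref{homex}, only standard facts about thick subcategories and Verdier quotients (that the quotient is generated by $\pi(\zz(N-1))$, that left-orthogonality to $DT_{<N-1}$ makes the localization map on Hom-groups bijective, and that the kernel of the quotient functor is exactly the thick subcategory), all of which you sketch correctly. What the paper's route buys is an explicit, functorial model of the truncation: the same object $\Psi(M)$ reappears in the remark following Lemma \ref{sf1} as a concrete choice of $\Pi_{<n}M$, so the internal-Hom argument does double duty in the subsequent construction of the slice functors, whereas your argument establishes membership in $DT_{<n}$ without producing such a model.
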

\begin{proof}
Set
$$\Psi(M)=\uu{Hom}(\uu{Hom}(M,\zz(n-1)),\zz(n-1))$$
The adjoint to the morphism
$$ev\circ \sigma:M\oo \uu{Hom}(M,\zz(n-1))\sr \zz(n-1)$$
where $\sigma$ is the permutation of multiples is a morphism
$$\psi:M\sr \Psi(M)$$
which is natural in $M$. Using Proposition \ref{homex} and Corollary
\ref{cancell} one verifies immediately that $\Psi(M)$ lies in $DT_{<
n}$ for $M=\zz(q)[p]$, $i\ge 0$ and therefore, by Proposition
\ref{homex}, $\Psi(M)$ lies in $DT_{< n}$ for all $M$.  It remains to
check that for $M$ satisfying the condition of the lemma $\psi$ is an
isomorphism. Consider the maps
$$H_{*,i}(M)\sr H_{*,i}(\Psi(M)).$$
For $i<n$ and $M=\zz(q)[p]$ these maps are isomorphisms by Corollary
\ref{cancell}. Together with Proposition \ref{homex} and the five
lemma we conclude that they are isomorphisms for $i<n$ and all $M$.
On the other hand $H_{*,i}(\Psi(M))=0$ for $i\ge n$ and any $M$ and
therefore under the conditions of the lemma $\psi$ is an isomorphism
by Lemma \ref{start}.
\end{proof}
\begin{lemma}
\llabel{sf1}
For any $M$ in $DT$ and any $n$ there exists a distinguished triangle 
of the form
\begin{eq}\llabel{stm}
\Pi_{\ge n}M\sr M\sr \Pi_{<n}M\sr\Pi_{\ge n}M[1]
\end{eq}
such that $\Pi_{\ge n}M$ lies in $DT_{\ge n}$ and $\Pi_{<n}M$ lies in
$DT_{< n}$.
\end{lemma}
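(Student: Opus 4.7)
The plan is to proceed by a semi-orthogonal decomposition argument. Let $\mathcal{C}\subset DT$ be the class of objects $M$ admitting a triangle as in (\ref{stm}); I will show $\mathcal{C}$ is a thick subcategory (closed under triangles, direct summands, and shifts) containing the Tate objects $\zz(i)[p]$, which by the definition of $DT$ forces $\mathcal{C} = DT$.

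The crucial preliminary is the Hom-vanishing $Hom_{DT}(X, Y[k]) = 0$ for all $X \in DT_{\ge n}$, $Y \in DT_{< n}$, and $k \in \zz$. This reduces by thickness in each variable to the case of Tate generators, where $Hom(\zz(i), \zz(j)[m]) = 0$ for $j < i$ by Corollary \ref{cancell}. The triangle (\ref{stm}) is obvious for $M = \zz(q)[p]$ by cases ($q \ge n$ or $q < n$), and closure of $\mathcal{C}$ under shifts is automatic since both $DT_{\ge n}$ and $DT_{< n}$ are triangulated.

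For closure under triangles, given $A \to B \to C \xrightarrow{h} A[1]$ with $A, C \in \mathcal{C}$, the composition $\Pi_{\ge n}C \to C \xrightarrow{h} A[1] \to \Pi_{<n}A[1]$ vanishes by Hom-vanishing. Hence $\Pi_{\ge n}C \to A[1]$ lifts, uniquely (using the vanishing one degree further), to a map $\tilde h\colon \Pi_{\ge n}C \to \Pi_{\ge n}A[1]$. Completing $\tilde h$ to a triangle inside $DT_{\ge n}$ produces an object $\Pi_{\ge n}B \in DT_{\ge n}$, and TR3 yields a morphism $\beta\colon \Pi_{\ge n}B \to B$ making the whole diagram a morphism of triangles. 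Applying the $3 \times 3$ lemma, the cones of the three columns assemble into a triangle $\Pi_{<n}A \to \Pi_{<n}B \to \Pi_{<n}C$, so $\Pi_{<n}B := \mathrm{cone}(\beta)\in DT_{< n}$.

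Closure under direct summands uses the Karoubianness of $DM^{eff}_{-}({\cal X})$, and hence of its thick subcategory $DT$, supplied by Proposition \ref{summands}. If $M = A \oplus B \in \mathcal{C}$ and $p\colon M \to M$ is the idempotent projecting onto $A$, then Hom-vanishing applied to (\ref{stm}) shows $Hom(\Pi_{\ge n}M, \Pi_{\ge n}M) \to Hom(\Pi_{\ge n}M, M)$ is an isomorphism, so $p$ lifts uniquely to an idempotent on $\Pi_{\ge n}M$, and similarly on $\Pi_{<n}M$. These idempotents split inside $DT_{\ge n}$ and $DT_{< n}$, and taking images produces the triangle for $A$ as a direct summand of the triangle for $M$. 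The main subtlety is the $3 \times 3$ bookkeeping in the triangle-closure step: one must verify that the lifted data genuinely fits into a morphism of triangles, which is exactly where the uniqueness of $\tilde h$ via Hom-vanishing is used; once this is handled, everything else is routine.
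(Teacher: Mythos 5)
Your argument is correct in outline, but it is a genuinely different route from the paper's. The paper does not argue by generation: it writes down the truncation explicitly, setting $\Pi_{\ge n}M=\uu{Hom}(\zz(n),M)(n)$ with first arrow the evaluation map $ev_{\zz(n),M}$ (internal Hom-objects in $DT$ exist by Lemma \ref{inthomt}/Theorem \ref{appmain}), and then shows the cone lies in $DT_{<n}$ by checking that $ev$ induces isomorphisms on $H_{*,i}$ for $i\ge n$ (reduction to $M=\zz(q)[p]$ via Proposition \ref{homex} and the Five Lemma, then Corollary \ref{cancell}) and invoking Lemma \ref{or1}. That explicit construction is what later gives the formulas $\Pi_{\ge n}=\uu{Hom}(\zz(n),-)(n)$ and $\Pi_{<n}=\uu{Hom}(\uu{Hom}(-,\zz(n-1)),\zz(n-1))$ and their triangulatedness. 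Your dévissage ("the class of $M$ admitting such a triangle is thick and contains the generators") buys an existence proof that bypasses the internal-Hom machinery and Lemma \ref{or1}, using only the semiorthogonality $Hom(DT_{\ge n},DT_{<n}[*])=0$ from Corollary \ref{cancell} (this is essentially Lemma \ref{sf3}, which the paper also uses), Karoubianness (Proposition \ref{summands}), and standard triangulated-category lemmas; functoriality would then be recovered separately as in Lemma \ref{sf2}, whereas the paper gets canonical functors for free.

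One step deserves more care than you give it: closure under direct summands. Lifting the idempotent $p$ to compatible idempotents on $\Pi_{\ge n}M$ and $\Pi_{<n}M$ (via the Hom-vanishing) and splitting them produces a candidate triangle for $A$ which is a direct summand of the distinguished triangle for $M$, but you still must know that such a summand is itself distinguished; this is true but not formal, and should be cited (it is the standard fact that a candidate triangle whose direct sum with another candidate triangle is distinguished is distinguished, cf. \cite{Nee}) or replaced by an argument completing $\Pi_{\ge n}A\sr A$ to a distinguished triangle and identifying its cone with the split summand of $\Pi_{<n}M$ using the semiorthogonality. Similarly, in the two-out-of-three step the assembly of the cones into a triangle requires choosing the completion supplied by the $3\times 3$ lemma (a ``good'' morphism of triangles), as you note; with these two points made precise the proof goes through.
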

\begin{proof}
Set
$$\Pi_{\ge n}(M)=\uu{Hom}(\zz(n),M)(n)$$
and define $\Pi_{<n}M$ by the distinguished triangle
$$\Pi_{\ge n}M\sr M\sr \Pi_{<n}M\sr\Pi_{\ge n}M[1]$$
where the first arrow is $e=ev_{\zz(n),M}$. Clearly, $\Pi_{\ge n}M$ lies
in $DT_{\ge n}$. It remains to check that $\Pi_{<n}M$ lies in $DT_{<
n}$. By Lemma \ref{or1} it is sufficient to check that
$H_{*,i}(\Pi_{<n}M)=0$ for all $i\ge n$ i.e. that $e$ defines an
isomorphism on $H_{*,i}(-)$ for $i\ge n$. In view of Proposition
\ref{homex} and the Five Lemma it is sufficient to verify it for
$M=\zz(q)[p]$ in which case it follows from Corollary \ref{cancell}.
\end{proof}
\begin{remark}\rm
Note that the proof of Lemma \ref{or1} shows that in the distinguished
triangle of Lemma \ref{sf1}
one may choose $M\sr \Pi_{<n}M$ to be 
$$\psi:M\sr \uu{Hom}(\uu{Hom}(M,\zz(n-1)),\zz(n-1))$$
\end{remark}
\begin{lemma}
\llabel{sf2}
Let $f:M_1\sr M_2$ be a morphism in $DT$ and let
$$\Pi_{\ge n}M_1\sr M_1\sr \Pi_{<n}M_1\sr\Pi_{\ge n}M_1[1]$$
$$\Pi_{\ge n}M_2\sr M_2\sr \Pi_{<n}M_2\sr\Pi_{\ge n}M_2[1]$$
be distinguished triangles satisfying the conditions of Lemma
\ref{sf1}. Then there exists a unique morphism of triangles of the
form
\begin{eq}\llabel{sf2eq}
\begin{CD}
\Pi_{\ge n}M_1 @>>> M_1@>>> \Pi_{<n}M_1@>>>\Pi_{\ge n}M_1[1]\\
@VVV @VfVV @VhVV @VVV\\
$$\Pi_{\ge n}M_2@>>> M_2@>>> \Pi_{<n}M_2@>>>\Pi_{\ge n}M_2[1]
\end{CD}
\end{eq}
\end{lemma}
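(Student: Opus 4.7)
The plan is to derive both existence and uniqueness from a single orthogonality statement: $Hom_{DT({\cal X})}(X,Y)=0$ whenever $X\in DT_{\ge n}$ and $Y\in DT_{<n}$, and similarly for all shifts of either argument.

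To establish this orthogonality, start from the Tate generators. By Corollary \ref{cancell}, for $i\ge n>j$ and any $p,q$ one has $Hom(\zz(i)[p],\zz(j)[q])=0$ since $j<i$. A two-step devissage then propagates the vanishing: fixing $Y\in DT_{<n}$ together with all its shifts, the full subcategory of objects $X$ for which the Hom vanishes is thick and contains $\zz(i)$ for all $i\ge n$, so it contains $DT_{\ge n}$; then fixing $X\in DT_{\ge n}$, the same argument applied to $Y$ shows the subcategory in question contains $DT_{<n}$. Since both $DT_{\ge n}$ and $DT_{<n}$ are closed under shifts, the vanishing holds with any desired shift inserted.

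For existence, the composite $\Pi_{\ge n}M_1\sr M_1\stackrel{f}{\sr} M_2\sr\Pi_{<n}M_2$ is an element of $Hom(\Pi_{\ge n}M_1,\Pi_{<n}M_2)=0$, hence zero. The long exact sequence obtained by applying $Hom(\Pi_{\ge n}M_1,-)$ to the second triangle then lifts $f\circ(\Pi_{\ge n}M_1\sr M_1)$ to a morphism $u:\Pi_{\ge n}M_1\sr\Pi_{\ge n}M_2$ making the left square commute, and axiom TR3 for a triangulated category extends the pair $(u,f)$ to a full morphism of triangles.

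For uniqueness, suppose $(u,f,h)$ and $(u',f,h')$ are two such morphisms and set $\delta_A=u-u'$, $\delta_C=h-h'$. Then $\delta_A$ composed with $\Pi_{\ge n}M_2\sr M_2$ vanishes, so by the long exact sequence $\delta_A$ lifts from $Hom(\Pi_{\ge n}M_1,\Pi_{<n}M_2[-1])$; similarly $\delta_C$ precomposed with $M_1\sr\Pi_{<n}M_1$ vanishes, so $\delta_C$ comes from $Hom(\Pi_{\ge n}M_1[1],\Pi_{<n}M_2)$. Both of these groups are zero by the orthogonality, so $\delta_A=0$ and $\delta_C=0$. There is no real obstacle here: once the orthogonality is pinned down, everything reduces to standard manipulations with distinguished triangles and the Cancellation Theorem.
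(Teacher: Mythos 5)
Your proof is correct and follows essentially the same route as the paper: everything rests on the orthogonality $Hom(\Pi_{\ge n}M_1[*],\Pi_{<n}M_2)=0$, obtained by devissage from Corollary \ref{cancell} (which you spell out more explicitly than the paper does), with uniqueness handled identically. The only cosmetic difference is that you first produce the map $u$ on the $\Pi_{\ge n}$ terms from the left square and then invoke TR3, whereas the paper first produces $h$ on the $\Pi_{<n}$ terms via $Hom(M_1,\Pi_{<n}M_2)=Hom(\Pi_{<n}M_1,\Pi_{<n}M_2)$ and then extends; both are valid instances of the same argument.
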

\begin{proof}
The uniqueness follows from the fact that 
$$Hom(\Pi_{\ge n}M_1[*],\Pi_{<n}M_2)=0$$
The same fact implies that
$$Hom(M_1,\Pi_{<n}M_2)=Hom(\Pi_{<n}M_1,\Pi_{<n}M_2)$$
and therefore there exists a morphism $h$ which makes the middle square of
(\ref{sf2eq}) commutative. Extending this square to a morphism of
distinguished triangles we get the existence part of the lemma.
\end{proof}
\begin{lemma}
\llabel{sf3}
For any $M$ and any triangle of the form (\ref{stm}) satisfying the
conditions of Lemma \ref{sf1} one has:
\begin{enumerate}
\item For any $N$ in $DT_{<n}$ one has
$$Hom(\Pi_{<n}M, N)=Hom(M,N)$$
$$Hom(\Pi_{\ge n}M,N)=0$$
\item For any $N$ in $DT_{\ge n}$ one has
$$Hom(N,\Pi_{\ge n}M)=Hom(N,M)$$
$$Hom(N,\Pi_{<n}M)=0$$
\end{enumerate}
\end{lemma}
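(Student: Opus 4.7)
The plan is to derive both statements from a single orthogonality fact, namely that $\mathrm{Hom}(A[p],B)=0$ for every $A\in DT_{\ge n}$, every $B\in DT_{<n}$, and every integer $p$. This is exactly the vanishing already invoked in the proof of Lemma~\ref{sf2}, so it is reasonable to rely on it here; if we needed to re-establish it, we would fix $B\in DT_{<n}$ and observe that the full subcategory of objects $A$ with $\mathrm{Hom}(A[*],B)=0$ is thick, so it is enough to check generators $A=\zz(i)$ with $i\ge n$. One then fixes $i\ge n$ and makes the same thick-subcategory reduction on the $B$-variable down to generators $\zz(j)$ with $j<n$, whereupon $\mathrm{Hom}(\zz(i)[p],\zz(j)[q])=0$ by Corollary~\ref{cancell} since $j<n\le i$.

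Given this orthogonality, part (1) is immediate. For $N\in DT_{<n}$, the vanishing gives $\mathrm{Hom}(\Pi_{\ge n}M,N)=0$ directly, which is the second assertion. For the first assertion, apply $\mathrm{Hom}(-,N)$ to the triangle (\ref{stm}) to obtain the long exact sequence
$$\mathrm{Hom}(\Pi_{\ge n}M[1],N)\to\mathrm{Hom}(\Pi_{<n}M,N)\to\mathrm{Hom}(M,N)\to\mathrm{Hom}(\Pi_{\ge n}M,N).$$
Since $\Pi_{\ge n}M[1]$ is still in $DT_{\ge n}$, the outer two terms vanish by the same orthogonality, so the middle arrow is an isomorphism.

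Part (2) is the dual computation. For $N\in DT_{\ge n}$, the orthogonality gives $\mathrm{Hom}(N,\Pi_{<n}M)=0$, proving the second assertion. For the first, apply $\mathrm{Hom}(N,-)$ to (\ref{stm}) to get
$$\mathrm{Hom}(N,\Pi_{<n}M[-1])\to\mathrm{Hom}(N,\Pi_{\ge n}M)\to\mathrm{Hom}(N,M)\to\mathrm{Hom}(N,\Pi_{<n}M),$$
and note that $\mathrm{Hom}(N,\Pi_{<n}M[-1])=\mathrm{Hom}(N[1],\Pi_{<n}M)=0$ because $N[1]\in DT_{\ge n}$, while the rightmost term also vanishes; hence the middle arrow is an isomorphism.

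There is no real obstacle: the only point requiring any thought is the orthogonality between $DT_{\ge n}$ and $DT_{<n}$, and this is a standard thick-subcategory reduction to the generators $\zz(i)$, $\zz(j)$ followed by an appeal to Corollary~\ref{cancell}. Once this is in hand both assertions drop out of the two long exact sequences associated to the triangle (\ref{stm}).
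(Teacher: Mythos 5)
Your proof is correct; the paper in fact states Lemma \ref{sf3} with no proof at all, and the one nontrivial input you use --- the orthogonality $Hom(A[p],B)=0$ for $A\in DT_{\ge n}$, $B\in DT_{<n}$, obtained by a thick-subcategory reduction in both variables to the generators $\zz(i)$, $\zz(j)$ and Corollary \ref{cancell} --- is exactly the vanishing the paper already invokes without comment in the proof of Lemma \ref{sf2}. Your two long-exact-sequence arguments from the triangle (\ref{stm}) then give both parts, so this is precisely the standard argument the paper leaves implicit.
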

\begin{remark}\rm
The major difference between the slice filtrations in the tirangulated
category of motives and in the motivic stable homotopy category is
that in the later case Lemma \ref{sf3} does not hold. For $N$ in
$SH_{\ge n}$ and $M$ in $SH_{<n}$ one may have $Hom(N,M)\ne 0$. The
Hopf map $S^1_t\sr S^0$ is an example of a morphism of such a form.
\end{remark}
Lemma \ref{sf2} implies that the triangles of the form (\ref{stm}) are
functorial in $M$. Choosing one such triangle for each $M$ and each
$n$ we get functors:
$$\Pi_{\ge n}:DT\sr DT_{\ge n}$$
$$\Pi_{<n}:DT\sr DT_{<n}$$
Lemma \ref{sf3} shows that $\Pi_{\ge n}$ is a right adjoint to the
corresponding inclusion and $\Pi_{<n}$ is a left adjoint to the
corresponding inclusion. We can also describe these functors in terms
of internal Hom-functors
$$\Pi_{\ge n}(M)=\uu{Hom}(\zz(n),M)(n)$$
$$\Pi_{< n}(M)=\uu{Hom}(\uu{Hom}(M,\zz(n-1)),\zz(n-1))$$
By Proposition \ref{homex} we conclude that $\Pi_{\ge n}$ and $\Pi_{<
n}$ are triangulated functors.

Applying Lemma \ref{sf3} for $N=\Pi_{\ge (n+1)}M$ and $N=\Pi_{<(n-1)}$
we get canonical morphisms
$$\Pi_{\ge (n+1)}M\sr \Pi_{\ge n}M$$
$$\Pi_{<n}\sr \Pi_{<(n-1)}M$$
We extend these morphisms to distinguished triangles
\begin{eq}\llabel{sfup}
\Pi_{\ge (n+1)}M\sr \Pi_{\ge n}M\sr s_n(M)\sr \Pi_{\ge (n+1)}M[1]
\end{eq}
\begin{eq}\llabel{sfdown}
s'_{n-1}(M)\sr \Pi_{<n}\sr \Pi_{<(n-1)}M\sr s'_{n-1}(M)[1]
\end{eq}
One observes easily that $s'_n(M)\cong s_n(M)$ and that this object
lies in $DT_n=DT_{\ge n}\cap DT_{< n+1}$. Therefore, Lemma \ref{sf2}
is applicable to triangles (\ref{sfup}), (\ref{sfdown}) and we
conclude that these triangles are functorial in $M$. Choosing one such
triangle for each $M$ and each $n$ we obtain functors 
\begin{eq}
\llabel{sln}
s_n:DT\sr DT_n
\end{eq}
Since $s_n=\Pi_{< n+1}\Pi_{\ge n}$ these functors are triangulated.
We set
\begin{eq}
\llabel{sl}
s_*=\oplus_{n\ge 0} s_n:DT\sr \oplus_{n\ge 0} DT_n
\end{eq}
Note that (\ref{sl}) makes sense since for any $M$ one has $s_n(M)=0$
for all but finitely many $n$. The functors (\ref{sln}), (\ref{sl})
are called the slice functors over $\cal X$.  
\begin{lemma}
\llabel{slconserv}
The functor $s_*$ is concervative i.e. if $s_*(M)=0$ then $M=0$.
\end{lemma}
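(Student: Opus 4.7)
The plan is to exploit the slice triangles (\ref{sfup}) as a telescoping tower that collapses when all slices vanish. Assume $s_*(M)\cong 0$, so $s_n(M)\cong 0$ for every $n\ge 0$, and the goal is to show $M\cong 0$.

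First I would handle the boundary case: I claim $\Pi_{\ge 0}M\cong M$. Indeed, $DT_{<0}$ is the thick subcategory generated by $\zz(i)$ for $i<0$, of which there are none, so $DT_{<0}=0$. The triangle of Lemma \ref{sf1} at $n=0$ then forces $\Pi_{<0}M\cong 0$ and hence $\Pi_{\ge 0}M\cong M$. Next I would iterate. Since $s_n(M)\cong 0$, the triangle (\ref{sfup})
\[
\Pi_{\ge (n+1)}M\sr \Pi_{\ge n}M\sr s_n(M)\sr \Pi_{\ge (n+1)}M[1]
\]
yields an isomorphism $\Pi_{\ge (n+1)}M\cong \Pi_{\ge n}M$ for every $n\ge 0$. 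Composing these with $M\cong \Pi_{\ge 0}M$ gives $M\cong \Pi_{\ge n}M\in DT_{\ge n}$ for all $n\ge 0$, so $M\in \bigcap_n DT_{\ge n}$. The previously stated property $\bigcap_n DT_{\ge n}=0$ then forces $M\cong 0$.

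I do not anticipate any real obstacle here. The argument is a pure telescoping that uses only the already-established slice triangles (\ref{sfup}) together with the vanishing of the intersection of the descending slice filtration. The only mildly nontrivial identification is $\Pi_{\ge 0}M\cong M$, and that is immediate from the emptiness of the generating set of $DT_{<0}$. (One could equivalently run the dual argument using the triangles (\ref{sfdown}) and the fact that $\bigcup_n DT_{<n}=DT({\cal X})$, but the ascending version above is a hair more direct.)
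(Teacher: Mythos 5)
Your argument is correct and is essentially the induction the paper has in mind: since $s_n(M)=0$ makes each map $\Pi_{\ge (n+1)}M\sr\Pi_{\ge n}M$ in (\ref{sfup}) an isomorphism, and $\Pi_{\ge 0}M\cong M$ because $DT_{<0}=0$, the object $M$ lands in $\cap_n DT_{\ge n}=0$. The paper's proof is only the remark ``follows easily by induction,'' and your telescoping argument (equivalently, the descending version via $\cup_n DT_{<n}=DT({\cal X})$) is a faithful fleshing-out of it.
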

\begin{proof}
Follows easily by induction.
\end{proof}
\begin{lemma}
\llabel{sltensor} Define tensor product on $\oplus_n DT_n$ by the
formula
$$(M_i)_{i\ge 0}\oo (M_j)_{j\ge 0}=(\oplus_{i+j=n} M_i\oo M_j)_{n\ge
0}.$$
Then for any $N, M$ there is a natural isomorphism 
$$s_*(N\oo M)=s_*(N)\oo
s_*(M).$$
\end{lemma}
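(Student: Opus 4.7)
The strategy is to construct a natural transformation $\eta : s_*(N) \oo s_*(M) \sr s_*(N\oo M)$ and show it is an isomorphism by reducing to Tate generators via a standard thick-subcategory argument. The first input I need is compatibility of the tensor product with the slice filtration: if $M\in DT_{\ge p}$ and $N\in DT_{\ge q}$, then $M\oo N\in DT_{\ge p+q}$. This is immediate on Tate generators since $\zz(i)[a]\oo \zz(j)[b]=\zz(i+j)[a+b]$, and extends first in one variable (fix $N=\zz(j)$ Tate with $j\ge q$ and take the thick subcategory of $M$ for which the conclusion holds) and then in the other, using that $\oo$ is bi-triangulated (Proposition \ref{a1oo}) and that $DT_{\ge p+q}$ is thick.

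Next I construct $\eta$. Using the explicit functorial descriptions $\Pi_{\ge n}(-)=\uu{Hom}(\zz(n),-)(n)$ and $\Pi_{<n}(-)=\uu{Hom}(\uu{Hom}(-,\zz(n-1)),\zz(n-1))$, together with the uniqueness clause of Lemma \ref{sf3}, I obtain canonical maps $\Pi_{\ge i}M\oo \Pi_{\ge j}N\sr \Pi_{\ge i+j}(M\oo N)$: by Step~1 the source lies in $DT_{\ge i+j}$, so the composite into $M\oo N$ factors uniquely through $\Pi_{\ge i+j}(M\oo N)$. These factorizations are compatible as $(i,j)$ vary through the comparisons $(i+1,j)\mapsto (i,j)$ and $(i,j+1)\mapsto (i,j)$, and the induced maps on the cones of the connecting morphisms produce maps $s_i(M)\oo s_j(N)\sr s_{i+j}(M\oo N)$. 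Summing over $i+j=n$ and then over $n$ yields $\eta$; the sums are finite since for any fixed $M$ one has $s_n(M)=0$ for all but finitely many $n$.

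To show $\eta$ is an isomorphism I argue by thick-subcategory reduction. Both $(N,M)\mapsto s_*(N\oo M)$ and $(N,M)\mapsto s_*(N)\oo s_*(M)$ are bi-triangulated functors $DT\times DT\sr \oplus_n DT_n$, and the full subcategory of those $N$ for which $\eta_{N,M}$ is an isomorphism (with $M$ fixed) is triangulated and closed under direct summands by Proposition \ref{summands}. When $M=\zz(a)[p]$ and $N=\zz(b)[q]$, Corollary \ref{cancell} places $\zz(c)[r]$ in $DT_c$, so $s_n(\zz(c)[r])$ equals $\zz(c)[r]$ when $n=c$ and vanishes otherwise; both sides of $\eta$ then reduce to $\zz(a+b)[p+q]$ concentrated at index $a+b$, and tracing through the construction shows $\eta$ is the identity under this identification. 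Fixing $M$ Tate and varying $N$ gives the isomorphism for all $N\in DT$ and $M$ Tate; fixing an arbitrary $N$ and varying $M$ then gives the general case.

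The main obstacle is the coherent construction of $\eta$: while each individual factorization supplied by Lemma \ref{sf3}(2) is unique, assembling them into compatible morphisms on the cones that define $s_i$ in a general triangulated category can be delicate if one works by naive octahedra. The explicit $\uu{Hom}$-descriptions of $\Pi_{\ge n}$ and $\Pi_{<n}$ provide strictly functorial representatives, so the naturality of $\eta$ is forced by the adjunction data; this is what makes the assembly step go through without ambiguous choices.
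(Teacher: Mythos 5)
Your proposal is correct and follows essentially the same route as the paper: both construct a natural comparison map $\oplus_{i+j=n}s_i(M)\oo s_j(N)\sr s_n(M\oo N)$ from the truncation triangles using the fact that the tensor product respects the slice filtration (equivalently the orthogonality of Lemma \ref{sf3}), and then verify it is an isomorphism on Tate objects via Corollary \ref{cancell} and extend by the Five Lemma. The only difference is cosmetic: the paper assembles the map through the roof $s_{i+j}(\Pi_{\ge i}M\oo\Pi_{\ge j}N)$, inverting the projection onto $s_i(M)\oo s_j(N)$, whereas you factor $\Pi_{\ge i}M\oo\Pi_{\ge j}N\sr M\oo N$ through $\Pi_{\ge i+j}(M\oo N)$ and descend to the cones, which is made unambiguous by the vanishing $Hom(DT_{\ge i+j+1},DT_{<i+j+1})=0$ rather than by the $\uu{Hom}$-descriptions you invoke.
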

\begin{proof}
For any $M$ and $N$ the morphisms $\Pi_{\ge i}M\sr M$ and $\Pi_{\ge
j}N\sr N$ define a morphism
\begin{eq}
\llabel{mor1}
s_{i+j}(\Pi_{\ge i}M\oo \Pi_{\ge
j}N)\sr s_{i+j}(M\oo N)
\end{eq}
On the other hand the the projections $\Pi_{\ge i}M\sr s_i(M)$ and $\Pi_{\ge
j}N\sr s_j(N)$ define a morphism
\begin{eq}
\llabel{mor2}
s_{i+j}(\Pi_{\ge i}M\oo \Pi_{\ge
j}N)\sr s_{i+j}(s_i(M)\oo s_j(N))=s_i(M)\oo s_j(N)
\end{eq}
One can easily see that (\ref{mor2}) is an isomorphism. The inverse to
(\ref{mor2}) together with (\ref{mor1}) defines a natural morphism
$$\oplus_{i+j=n} s_i(M)\oo s_j(N)\sr s_{n}(M\oo N)$$
One verifies easily that it is an isomorphism for $M=\zz(q)[p]$,
$N=\zz(q')[p']$ which implies by the Five Lemma that it is an
isomorphism for all $M$ and $N$.
\end{proof}
\begin{lemma}
\llabel{slinv}
The functors $\Pi_{\ge n}$, $\Pi_{<n}$ and $s_*$ commute with the
pull-back functors $Lf^*$ for arbitrary morphisms of smooth simplicial
schemes $f:{\cal X}\sr {\cal Y}$.
\end{lemma}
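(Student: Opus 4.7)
The plan is to reduce the claim to the uniqueness statement of Lemma~\ref{sf2}. Given $M\in DT({\cal Y})$, I would apply $Lf^*$ to the distinguished triangle $\Pi_{\ge n}M\sr M\sr \Pi_{<n}M\sr \Pi_{\ge n}M[1]$ of Lemma~\ref{sf1}, recognize the resulting triangle as a valid such triangle for $Lf^*M$ in $DT({\cal X})$, and invoke Lemma~\ref{sf2} on the identity $Lf^*M=Lf^*M$ in both directions to produce canonical isomorphisms $Lf^*\Pi_{\ge n}M\cong \Pi_{\ge n}Lf^*M$ and $Lf^*\Pi_{<n}M\cong \Pi_{<n}Lf^*M$. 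The statement for $s_n$ is then obtained by applying the same argument to triangle (\ref{sfup}), which is itself a Lemma~\ref{sf1} triangle for $\Pi_{\ge n}M$ at level $n+1$.

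The one input to verify is that $Lf^*$ preserves the subcategories $DT_{\ge k}$ and $DT_{<k}$. Since each of these is a thick subcategory generated by Tate objects $\zz(i)$ (with $i\ge k$ and $i<k$ respectively), and since $Lf^*$ is triangulated (Lemma~\ref{resp}) and preserves direct summands, this reduces to the observation that $Lf^*\zz(i)\cong \zz(i)$. The latter follows from the definition $\zz(i):=Lc^*_{{\cal Y}}\zz(i)$ together with the compatibility $Lc^*_{{\cal X}}\cong Lf^*Lc^*_{{\cal Y}}$ established in Section~\ref{ss1}.

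Granting this, the pulled-back triangle $Lf^*\Pi_{\ge n}M\sr Lf^*M\sr Lf^*\Pi_{<n}M\sr$ has its outer terms in $DT_{\ge n}({\cal X})$ and $DT_{<n}({\cal X})$, so Lemma~\ref{sf2} applied to the identity of $Lf^*M$ yields the two natural isomorphisms for $\Pi_{\ge n}$ and $\Pi_{<n}$. Applying $Lf^*$ to (\ref{sfup}) and substituting these isomorphisms for $\Pi_{\ge n+1}$ and $\Pi_{\ge n}$ yields a triangle $\Pi_{\ge n+1}Lf^*M\sr \Pi_{\ge n}Lf^*M\sr Lf^*s_n M\sr$ with $Lf^*s_n M$ lying in $DT_{<n+1}({\cal X})$ by subcategory preservation; this is a valid Lemma~\ref{sf1} triangle for $\Pi_{\ge n}Lf^*M$, and a second application of Lemma~\ref{sf2} produces a natural isomorphism $Lf^*s_n M\cong s_n Lf^*M$. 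Summing over $n$ gives $Lf^*s_*\cong s_* Lf^*$. The only potentially delicate point is the preservation of the slice subcategories, and this is routine once reduced to the Tate generators.
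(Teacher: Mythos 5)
Your proposal is correct and follows essentially the same route as the paper: the paper's one-line proof rests exactly on the fact that $Lf^*$ preserves $DT_{\ge n}$ and $DT_{<n}$ (which you verify via $Lf^*\zz(i)\cong\zz(i)$ from $Lc^*_{\cal X}=Lf^*Lc^*_{\cal Y}$), with the deduction via the uniqueness of the truncation triangles (Lemmas \ref{sf1}--\ref{sf2}) left implicit there and spelled out by you.
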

\begin{proof}
This follows immediately from the fact that the functor $Lf^*$ takes
$DT_{\ge n}$ to $DT_{\ge n}$ and $DT_{<n}$ to $DT_{<n}$.
\end{proof}
\begin{lemma}
\llabel{dualten} Let $X$, $Y$, $P_1$, $P_2$ be such that for some $n$
and $m$ one has
$$X\in DT_{\le n}\,\,\,\,\, P_1\in DT_{\ge n}$$
$$Y\in DT_{\le m}\,\,\,\,\, P_2\in DT_{\ge m}$$
Then 
$$(\uu{Hom}(X,P_1)\oo\uu{Hom}(Y,P_2), ev_{X,P_1}\oo ev_{Y,P_2})$$
is an internal Hom-object from $X\oo Y$ to $P_1\oo P_2$.
\end{lemma}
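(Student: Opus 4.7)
The plan is to recast the claim in the standard way. A pair $(Z,e\colon Z\oo X\oo Y\sr P_1\oo P_2)$ is an internal Hom-object from $X\oo Y$ to $P_1\oo P_2$ iff the map adjoint to $e$ induces an isomorphism with the chosen internal Hom-object $\uu{Hom}(X\oo Y,P_1\oo P_2)$. So, writing $e$ for the composite of the symmetry shuffle with $ev_{X,P_1}\oo ev_{Y,P_2}$, it suffices to show that the comparison morphism
$$\phi\colon \uu{Hom}(X,P_1)\oo\uu{Hom}(Y,P_2)\lr \uu{Hom}(X\oo Y, P_1\oo P_2)$$
adjoint to $e$ is an isomorphism in $DT({\cal X})$.

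To prove $\phi$ is an isomorphism, I would do a four-fold devissage, varying one of $X$, $Y$, $P_1$, $P_2$ at a time while the other three are fixed. Using Proposition \ref{homex} (which guarantees that $\uu{Hom}(-,-)$ is triangulated in each variable) together with the fact that $\stackrel{L}{\oo}$ preserves triangles and direct summands, the class of objects in a given variable for which $\phi$ is an isomorphism is closed under triangles and direct summands. Since $DT_{\le n}$ (resp.\ $DT_{\ge n}$) is by definition the smallest such class containing the Tate shifts $\zz(a)[p]$ with $a\le n$ (resp.\ $\zz(i)[r]$ with $i\ge n$), this reduces the problem to the base case $X=\zz(a)[p]$, $Y=\zz(b)[q]$, $P_1=\zz(i)[r]$, $P_2=\zz(j)[s]$, with $a\le n\le i$ and $b\le m\le j$.

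In this base case, by the specification of internal Hom-objects made after Lemma \ref{inthomt} and by Corollary \ref{cancell}, one has $\uu{Hom}(\zz(a)[p],\zz(i)[r])=\zz(i-a)[r-p]$ (and similarly for the other factor), so both sides of $\phi$ are canonically identified with $\zz(i+j-a-b)[r+s-p-q]$, and a direct inspection of the evaluation morphisms shows $\phi$ is the identity map. The main obstacle is just bookkeeping: organizing the four devissage steps so that the slice-range hypotheses $X\in DT_{\le n}$, $P_1\in DT_{\ge n}$ (and similarly for $m$) are preserved throughout, which they are since the thick subcategories in question are closed under the operations used. Once the reduction is made, the base case is a direct application of the cancellation theorem.
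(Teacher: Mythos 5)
Your argument is correct and essentially the paper's own: the paper likewise reduces, via Proposition \ref{homex} and the Five Lemma, to the case where all objects involved are Tate motives $\zz(q)[p]$ with the appropriate twists and then concludes by Corollary \ref{cancell}. The only cosmetic difference is that you package the universal property as a comparison morphism into $\uu{Hom}(X\oo Y,P_1\oo P_2)$ (available by Lemma \ref{inthomt}) and do a four-fold devissage, whereas the paper checks bijectivity of $Hom(M,\uu{Hom}(X,P_1)\oo\uu{Hom}(Y,P_2))\sr Hom(M\oo X\oo Y,P_1\oo P_2)$ for all $M$, reducing $M$ to Tate objects as well.
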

\begin{proof}
We need to verify that for any $M$ the homomorphism
$$Hom(M, X'\oo Y')\sr Hom(M\oo X\oo Y, P_1\oo P_2)$$
defined by $ev_{X,P_1}\oo ev_{Y,P_2}$ is a bijection. Using
Proposition \ref{homex} and the Five Lemma we can reduce the problem
to the case when $M, X, Y, P_1$ and $P_2$ are all motives of the form
$\zz(q)[p]$ with the appropriate restrictions of $q$. In this case the
statement follows from Corollary \ref{cancell}.
\end{proof}
\begin{lemma}
\llabel{unext}
Let $n\ge 0$ be an integer and 
\begin{eq}
\llabel{ab}
M_0\stackrel{a}{\sr} M_1\stackrel{b}{\sr} M_2
\end{eq}
a sequence of morphisms in $DT$ such that the following conditions
hold:
\begin{enumerate}
\item $M_0$ is in $DT_{\ge n}$ and $s_{i}(a)$ is an isomorphism for
$i\ge n$
\item $M_2$ is in $DT_{<n}$ and $s_i(b)$ is an isomorphism for $i<n$.
\end{enumerate}
Then there exists a unique morphism $M_2\sr M_0[1]$ such that the
sequence
$$M_0\stackrel{a}{\sr} M_1\stackrel{b}{\sr} M_2\sr M_0[1]$$
is a distinguished triangle. This distinguished triangle is then
isomorphic to the triangle
$$\Pi_{\ge n}(M_1)\sr M_1\sr \Pi_{<n}(M_1)\sr \Pi_{\ge n}(M_1)[1]$$
\end{lemma}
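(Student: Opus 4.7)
The plan is to produce canonical isomorphisms $\tilde{a}:M_0\sr\Pi_{\ge n}M_1$ and $\tilde{b}:\Pi_{<n}M_1\sr M_2$ identifying the sought triangle with the canonical slice triangle
$$\Pi_{\ge n}M_1\stackrel{i}{\sr}M_1\stackrel{p}{\sr}\Pi_{<n}M_1\stackrel{\delta}{\sr}\Pi_{\ge n}M_1[1]$$
of Lemma \ref{sf1}, and to read off both existence and uniqueness of the connecting morphism from this identification, together with the last assertion of the lemma.

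First, since $M_0\in DT_{\ge n}$, Lemma \ref{sf3}(2) lifts $a$ uniquely to $\tilde{a}$ with $a=i\circ\tilde{a}$; dually, since $M_2\in DT_{<n}$, Lemma \ref{sf3}(1) factors $b$ uniquely as $b=\tilde{b}\circ p$. To see that $\tilde{a}$ is an isomorphism I would apply $s_j$ to $a=i\circ\tilde{a}$. For $j\ge n$, the map $s_j(i)$ is an isomorphism because $s_j(\Pi_{<n}M_1)=0$, and $s_j(a)$ is an isomorphism by hypothesis, hence so is $s_j(\tilde{a})$; for $j<n$ both $s_j(M_0)$ and $s_j(\Pi_{\ge n}M_1)$ vanish, as both lie in $DT_{\ge n}$. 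Lemma \ref{slconserv} then forces $\tilde{a}$ to be an isomorphism, and the argument for $\tilde{b}$ is entirely analogous.

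For existence, set $c:=\tilde{a}^{-1}[1]\circ\delta\circ\tilde{b}^{-1}$. Then $(\tilde{a}^{-1}, Id_{M_1}, \tilde{b}, \tilde{a}^{-1}[1])$ is by construction a morphism of triangles from the slice triangle to $(M_0\stackrel{a}{\sr}M_1\stackrel{b}{\sr}M_2\stackrel{c}{\sr}M_0[1])$, and since its three non-trivial vertical components are isomorphisms it is an isomorphism of triangles; in particular the target is distinguished and isomorphic to the slice triangle. For uniqueness, let $c'$ be any completion to a distinguished triangle. The pair $(\tilde{a}^{-1}, Id_{M_1})$ makes a commuting square between the first morphisms of the slice triangle and of $(a,b,c')$, so axiom (TR3) produces a third vertical $f_2:\Pi_{<n}M_1\sr M_2$ extending this data to a morphism of triangles. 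The second commuting square reads $b=f_2\circ p$, so by the uniqueness of this factoring (Lemma \ref{sf3}(1)) we obtain $f_2=\tilde{b}$; the third square then forces $c'\circ\tilde{b}=\tilde{a}^{-1}[1]\circ\delta$, giving $c'=c$.

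The main obstacle is precisely the uniqueness of $c$, since in a general triangulated category the first two morphisms of a distinguished triangle do not determine the connecting morphism. The argument above resolves this by exploiting the rigidity of the canonical slice factorings of Lemma \ref{sf3} together with the existence assertion of (TR3); the existence and the ``then isomorphic to'' assertion come almost for free once $\tilde{a}$ and $\tilde{b}$ have been shown to be isomorphisms via the conservative family of slice functors.
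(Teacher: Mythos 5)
Your proof is correct, but it is organized differently from the paper's. The paper's argument builds the triangle from the other end: since $M_0\in DT_{\ge n}$ and $M_2\in DT_{<n}$ one has $Hom(M_0,M_2)=0$, so $b\circ a=0$; extending $a$ to a distinguished triangle, $b$ then factors through a morphism $\phi:cone(a)\sr M_2$, and the hypotheses on $s_i(a)$, $s_i(b)$ show $s_*(\phi)$ is an isomorphism, so $\phi$ is an isomorphism by Lemma \ref{slconserv}; the uniqueness of the connecting map and the identification with the canonical triangle are then dismissed as ``straightforward''. You instead compare $(a,b)$ directly with the canonical slice triangle of $M_1$: Lemma \ref{sf3} factors $a$ through $\Pi_{\ge n}M_1$ and $b$ through $\Pi_{<n}M_1$, the slice functors plus Lemma \ref{slconserv} show the induced maps $\tilde a$, $\tilde b$ are isomorphisms, and transport of structure gives existence, while (TR3) combined with the uniqueness of the factorization of $b$ through $p$ gives uniqueness of the connecting morphism. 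Both proofs rest on the same two pillars --- the semiorthogonality $Hom(DT_{\ge n},DT_{<n})=0$ packaged in Lemma \ref{sf3}, and the conservativity of $s_*$ --- but your choice of intermediate object ($\Pi_{\ge n}M_1$, $\Pi_{<n}M_1$ rather than $cone(a)$) has the advantage of delivering all three assertions of the lemma in one stroke, in particular making explicit the uniqueness argument, which is the genuinely delicate point since in a general triangulated category the first two maps do not determine the boundary map and the naive vanishing $Hom(M_2,M_0[1])=0$ is not available here.
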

\begin{proof}
Note first that $Hom(M_0,M_2)=0$ and therefore $b\circ a=0$. Extending
$a$ to a distinguished triangle we get a factorization of $b$ through
a morphism $\phi:cone(a)\sr M_2$. Our conditions imply that
$s_*(\phi)$ is an isomorphism and we conclude by Lemma \ref{slconserv}
that $\phi$ is an isomorphism and hence (\ref{ab}) extends to a
distinguished triangle. The proof of two other statements of the lemma
is straightforward. 
\end{proof}
Since the functor $X\mapsto X(n)$ from $DT_0$ to $DT_n$ is an
equivalence (by Corollary \ref{cancell}) we may consider $s_*$ as a
functor with values in $\oplus_n DT_0$. To describe the category
$DT_0$ consider the projection
\begin{eq}
\llabel{projto}
D({\cal X})\sr DM_{-}^{eff}({\cal X})
\end{eq}
from the derived category of presheaves with transfers over $\cal X$
to $DM$. Let us say that a presheaf with transfers $(F_i)$ on $\cal X$
is locally constant if for every $i$ the presheaf with transfers $F_i$
on $Sm/{\cal X}_i$ is locally constant. Locally constant presheaves
with transfers clearly form an abelian subcategory $LC$ in the abelian
category of presheaves with transfers. 
\begin{remark}\rm
Let $X\mapsto CC(X)$ be the functor which commutes with coproducts and
takes a connected scheme to the point. Applying $CC$ to a simplicial
scheme $\cal X$ we get a simplicial set $CC({\cal X})$. If $CC({\cal
X})$ is a connected simplicial set then $LC({\cal X})$ is
equivalent to the category of modules over $\pi_1(CC({\cal X}))$.
\end{remark}
Let $DLC$ be full the subcategory in $D({\cal X})$ which consists of
complexes of presheaves with transfers with locally constant
cohomology presheaves. Note that $DLC$ is a thick subcategory. Let
further $DT'_0$ be the thick subcategory in $DLC$ generated by the
constant sheaf $\zz$. Note that the category $DLC$ is Karoubian and
therefore the same holds for $DT'_0$.
\begin{proposition}
\llabel{dt00}
The projection (\ref{projto}) defines an equivalence between $DT_0'$
and $DT_0$.
\end{proposition}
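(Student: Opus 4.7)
The plan is to show that the projection $D({\cal X})\sr DM^{eff}_{-}({\cal X})$ is fully faithful on $DLC$, and then match the two thick subcategories generated by $\zz$ on each side. Since $DM^{eff}_{-}({\cal X})$ is the Verdier localization of $D({\cal X})$ at the class $W({\cal X})$ of $\af$-equivalences, full faithfulness on $DLC$ reduces to showing that every $N\in DLC$ is right $W({\cal X})$-local, i.e.\ $Hom_{D({\cal X})}(K,N)=0$ for every $K\in W({\cal X})$.

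The collection of such $K$ is closed under triangles, direct sums, and direct summands, so it suffices to check vanishing for the generators $Lr_{i,\#}(K_i)$ with $K_i\in W^{el}({\cal X}_i)$ (which agree with $r_{i,\#}(K_i)$ on complexes of representables). By the adjunction $(Lr_{i,\#},r_i^*)$,
$$Hom_{D({\cal X})}(Lr_{i,\#}(K_i),N)=Hom_{D({\cal X}_i)}(K_i,r_i^*(N)),$$
and by definition $r_i^*(N)$ is a complex of locally constant presheaves with transfers on $Sm/{\cal X}_i$. The question reduces to: does a locally constant presheaf with transfers $L$ on $Sm/{\cal X}_i$ annihilate the $\af$-invariance complex $\zz_{tr}(Y\times\af)\sr\zz_{tr}(Y)$ and the Mayer--Vietoris complex of every Nisnevich distinguished square?

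Both hold by interpreting ``locally constant'' as ``factoring through the functor of connected components $\pi_0$'': the projection $Y\times\af\sr Y$ induces a bijection on $\pi_0$, so $L(Y)\sr L(Y\times\af)$ is an isomorphism; and for a Nisnevich distinguished square the induced diagram on $\pi_0$ is a pushout of sets, making the sequence $L(Y)\sr L(U)\oplus L(V)\sr L(W)$ exact. This is the main obstacle: it is the presheaf-with-transfers version of the standard descent property of locally constant Nisnevich sheaves, and once it is in place the vanishing propagates from the generators to all of $W({\cal X})$ by the closure properties above.

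With $DLC\sr DM^{eff}_{-}({\cal X})$ fully faithful, the restriction to $DT'_0$ is also fully faithful. Its essential image is a triangulated subcategory of $DM^{eff}_{-}({\cal X})$ containing $\zz$ and closed under direct summands (the source is Karoubian and the functor is fully faithful), so it contains $DT_{<1}=DT_0$. Conversely, the image is generated as a thick triangulated subcategory by $\zz$, so it is contained in the thick subcategory of $DT({\cal X})$ generated by $\zz$, which is $DT_0$. The two containments give essential surjectivity and complete the proof.
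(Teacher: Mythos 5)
Your overall skeleton is the paper's: prove that the relevant objects are local (orthogonal to $W({\cal X})$) so that the projection is fully faithful on them, then get essential surjectivity from the fact that the image of a full embedding with Karoubian source is a thick subcategory containing $\zz$, hence equals $DT_0=DT_{<1}$. The gap is at the step you yourself call the main obstacle. Orthogonality of $N$ to a Mayer--Vietoris complex $K=[\zz_{tr}(W)\sr\zz_{tr}(U)\oplus\zz_{tr}(V)\sr\zz_{tr}(Y)]$ means $Hom_{D}(K,N[n])=0$ for \emph{all} $n$; since $K$ is a bounded complex of representables (projectives in $PST$), this says the whole sequence $0\sr L(Y)\sr L(U)\oplus L(V)\sr L(W)\sr 0$ is exact, including surjectivity onto $L(W)$. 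Your justification --- ``the square is a pushout on $\pi_0$'' --- even if granted, only gives the kernel condition (left exactness), not the surjectivity at the $L(W)$ end; a pushout of sets never yields that by itself. That surjectivity is precisely what the paper extracts from $H^i_{Nis}(X,F)=0$ for $i>0$ for constant $F$ on a smooth $X$, via the Nisnevich Mayer--Vietoris sequence. It is true for smooth terms, but for a nontrivial reason: connected smooth schemes are irreducible, so each connected component of $V$ meets $W=V\times_Y U$ in a dense connected (or empty) open, which is what rules out ``loops'' on $\pi_0$ and forces the cokernel to vanish. Nothing in your argument invokes smoothness or unibranchness, yet the proposition genuinely fails without it (see the paper's remark following it), so an argument that never uses this hypothesis cannot be complete.

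A second unproved step is the identification ``locally constant $=$ factors through $\pi_0$''. For Nisnevich-locally constant sheaves on connected \emph{normal} schemes this is correct (a completely decomposed connected finite \'etale cover of a normal connected scheme is trivial), but it again uses normality and needs an argument; also, $r_i^*(N)$ for $N\in DLC$ is only a complex with locally constant cohomology presheaves, so a small d\'evissage is needed before you can speak of a single locally constant $L$. Note that you can sidestep both issues exactly as the paper does: you do not need all of $DLC$ to be local, only the objects of $DT'_0$, and since $DT'_0$ is by definition the thick subcategory generated by the constant presheaf $\zz$ and the orthogonality class of a fixed $K$ is thick, it suffices to treat the constant presheaf with transfers, for which the two needed inputs are $\af$-invariance and the vanishing of higher Nisnevich cohomology with constant coefficients on smooth schemes. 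Your essential surjectivity argument is fine and agrees with the paper's.
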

\begin{proof}
Let us show first that the restriction of (\ref{projto}) to $DT_0'$ is
a full embedding. In order to do this we have to show that objects of
$DT'_0$ are orthogonal to objects of $r_i(W^{el}({\cal X}_i))$. In
order to do this it is enough to show that for a smooth scheme $X$ the
constant presheaf with transfers is orthogonal to complexes lying in
$W^{el}(X)$ i.e. that for any such presheaf $F$ and such a complex $K$
one has
$$Hom_{D}(K,F)=0$$
This follows immediately from the fact that for a smooth $X$ and
constant $F$ one has
$$H^i_{Nis}(X,F)=0\,\,\,\rm for \,\,\, i>0$$
and
$$F(X\times\af)=F(X).$$
To finish the proof of the proposition it remains to check that the
image of $DT'_0$ lies in $DT$ and that any object of $DT_0$ is
isomorphic to the image of an object in $DT_0'$. The first statement
is obvious from definitions.  To see the second one observe that since
our functor is a full embedding and the source is Karoubian its image
is a thick subcategory. Since it contains $\zz$ it coincides
with $DT_0$.
\end{proof} 
\begin{remark}\rm
The category $DLC$ has a t-structure whose heart is the category
$LC({\cal X})$ of locally constant presheaves with transfers. It is
equivalent to the derived category of $LC({\cal X})$ if and only if
$CC({\cal X})$ is a $K(\pi,1)$.
\end{remark}
\begin{remark}\rm
The condition that the terms of $\cal X$ are disjoint unions of smooth
schemes over a field is important for Proposition \ref{dt00}. More
precisely what is required is that the terms of $\cal X$ are disjoint
unions of geometrically unibranch (e.g.normal) schemes. If this
condition does not hold the Nisnevich cohomology of the terms with the
coefficients in constant sheaves may be non-zero.
\end{remark}

\subsection{Embedded simplicial schemes}
\llabel{secemb}
\llabel{ss5}
In this section we consider a special case of the general theory
developed above.
\begin{definition}
\llabel{embed} A smooth simplicial scheme $\cal X$ over $k$ is called
embedded (over $k$) if the morphisms
$$M({\cal X}\times {\cal X})\sr M({\cal X})$$
defined by the projections are isomorphisms.
\end{definition}
\begin{lemma}
\llabel{opyat} Let $\cal X$ be a smooth simplicial scheme over $S$
such that ${\cal X}\sr \pi_0({\cal X})$ is a local equivalence and the
morphism $\pi_0({\cal X})\sr pt$ is a monomorphism. Then $\cal X$ is
embedded. 
\end{lemma}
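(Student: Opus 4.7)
The key point is that any local (Nisnevich) equivalence of smooth simplicial schemes induces an isomorphism of their motives in $DM_{-}^{eff}(S)$. This should follow from the fact that the class $W(S)$ of $\af$-equivalences is defined so that the Mayer--Vietoris complexes attached to upper distinguished squares are killed, so that Nisnevich-local quasi-isomorphisms of complexes $\zz_{tr}(-)$ become identities in $DM_{-}^{eff}(S)$. In particular, the hypothesis that ${\cal X}\sr \pi_{0}({\cal X})$ is a local equivalence yields an isomorphism $M({\cal X})\cong M(\pi_{0}({\cal X}))$ in $DM_{-}^{eff}(S)$.

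Next I would exploit the monomorphism assumption. To say that $\pi_{0}({\cal X})\sr pt$ is a monomorphism means that the diagonal
$$
\Delta_{\pi_{0}({\cal X})}:\pi_{0}({\cal X})\lr \pi_{0}({\cal X})\times \pi_{0}({\cal X})
$$
is an isomorphism. Consequently both projections $\pi_{0}({\cal X})\times \pi_{0}({\cal X})\sr \pi_{0}({\cal X})$ are isomorphisms inverse to $\Delta_{\pi_{0}({\cal X})}$, so they induce the same isomorphism on motives.

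Now I would combine these two observations with the stability of local equivalences under (term-wise) products of smooth simplicial schemes. Applying this to ${\cal X}\sr \pi_{0}({\cal X})$ on each factor gives that ${\cal X}\times {\cal X}\sr \pi_{0}({\cal X})\times \pi_{0}({\cal X})$ is again a local equivalence, hence induces an isomorphism
$$
M({\cal X}\times {\cal X})\ii M(\pi_{0}({\cal X})\times \pi_{0}({\cal X}))
$$
in $DM_{-}^{eff}(S)$. Combining this with the isomorphism $M(\pi_{0}({\cal X})\times \pi_{0}({\cal X}))\cong M(\pi_{0}({\cal X}))$ from the previous paragraph and with $M({\cal X})\cong M(\pi_{0}({\cal X}))$, I get a commutative square in which the two projections ${\cal X}\times {\cal X}\sr {\cal X}$ are identified, after passing to motives, with identical isomorphisms $M(\pi_{0}({\cal X}))\ii M(\pi_{0}({\cal X}))$. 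Thus both projection-induced maps $M({\cal X}\times {\cal X})\sr M({\cal X})$ are isomorphisms, which is the defining condition for ${\cal X}$ to be embedded.

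The step I expect to be the main obstacle is the first one, namely checking that the local equivalence ${\cal X}\sr \pi_{0}({\cal X})$ (and its product with itself) actually passes to an isomorphism on motives. The other ingredients are essentially formal manipulations with monomorphisms and products. I would probably handle this by reducing a general Nisnevich local equivalence to the Mayer--Vietoris complexes of clause (1) in the definition of $W^{el}(X)$, using the standard Cech-type argument, and then invoking the fact from Lemma \ref{resp} that $Lc_{\#}$ and the pullback functors respect $\af$-equivalences to transport the conclusion from the level of ${\cal X}$ to the level of $S$.
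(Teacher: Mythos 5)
Your proposal is correct and follows essentially the same route as the paper: the paper's one-line proof observes that the hypotheses make the projection $pr:{\cal X}\times{\cal X}\sr{\cal X}$ a local equivalence (exactly via the factorization through $\pi_0({\cal X})$ and the monomorphism condition that you spell out), and then uses the same key fact you identify, namely that local equivalences of smooth simplicial schemes induce isomorphisms on motives. The only cosmetic difference is that you pass through motives of $\pi_0({\cal X})$, which is a sheaf rather than a smooth simplicial scheme, so $M(\pi_0({\cal X}))$ should be interpreted via $\zz_{tr}$ of that sheaf or its standard simplicial resolution $G(\pi_0({\cal X}))$ (cf.\ Example \ref{resol}) --- or one simply concludes, as the paper does, that $pr$ itself is a local equivalence before applying $M$.
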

\begin{proof}
Our conditions imply that $pr:{\cal X}\times{\cal X}\sr {\cal X}$
is a local equivalence. Therefore, $M(pr)$ is an isomorphism.
\end{proof}
\begin{example}
\llabel{embedex}\rm
Let $X$ be a smooth scheme over $S$ and $\check{C}(X)$ the Chech
simplicial scheme of $X$ (see \cite[Sec.9]{MCnew}). Then $\check{C}(X)$ is
embedded. The sheaf $\pi_0(X)$ takes a smooth connected scheme $U$ to
$pt$ if for any point $p$ of $U$ there exists a morphism 
$$Spec({\cal O}^h_{U,p})\sr X$$
and to $\emptyset$ otherwise.
\end{example}
\begin{example}
\llabel{resol}\rm For any subpresheaf $F$ of the constant sheaf $pt$
the standard simplicial resolution $G(F)$ of $F$ is an embedded
simplicial scheme. We will show below that for any embedded $\cal X$
there exists $F\subset pt$ such that $M({\cal X})\cong M(G(F))$. See
Lemma \ref{descr}.
\end{example} 
\begin{lemma}
\llabel{rlminusone}
Let $\cal X$ be an embedded simplicial scheme and 
$$a:c^*Lc_{\#}c^*\sr c^*$$
the natural transformation defined by the adjunction. Then $a$ is an
isomorphism. 
\end{lemma}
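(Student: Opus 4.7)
The plan is to reduce the assertion to the case $K=\zz$ and then exploit the embedded hypothesis through a splitting argument.

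First, by Proposition~\ref{projform} together with the projection formula (Lemma~\ref{projform0}) and Lemma~\ref{ooc}, one may identify, naturally in $K$,
\[
c^*Lc_{\#}c^*(K)=c^*(M({\cal X})\otimes K)=c^*(M({\cal X}))\otimes c^*(K),
\]
and under this identification the component $a_K$ becomes $c^*(\bar u)\otimes\mathrm{id}_{c^*(K)}$, where $\bar u\colon M({\cal X})\to\zz$ is the counit of the adjunction $Lc_{\#}\dashv c^*$ at $\zz$. Hence it suffices to show that $c^*(\bar u)\colon c^*(M({\cal X}))\to\zz$ is an isomorphism in $DM^{eff}_{-}({\cal X})$.

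Next I would observe that $c^*(\bar u)$ is automatically split. The triangle identity for the adjunction, applied at $\zz\in DM^{eff}_{-}(S)$, gives that the composition
\[
\zz=c^*(\zz)\to c^*Lc_{\#}c^*(\zz)=c^*(M({\cal X}))\xrightarrow{c^*(\bar u)}\zz
\]
is the identity. Since $DM^{eff}_{-}({\cal X})$ is Karoubian by Proposition~\ref{summands}, this yields a decomposition $c^*(M({\cal X}))\cong\zz\oplus K$ in which $c^*(\bar u)$ is the first projection.

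Now the embedded hypothesis enters. Under the canonical identification $M({\cal X})\otimes M({\cal X})\cong M({\cal X}\times{\cal X})$ in $DM^{eff}_{-}(S)$, the map $\bar u\otimes\mathrm{id}_{M({\cal X})}$ corresponds to $M(pr_2)$, which is an isomorphism by Definition~\ref{embed}. Applying $c^*$ (and using Lemma~\ref{ooc}) we conclude that
\[
c^*(\bar u)\otimes\mathrm{id}_{c^*(M({\cal X}))}\colon c^*(M({\cal X}))\otimes c^*(M({\cal X}))\to c^*(M({\cal X}))
\]
is an isomorphism in $DM^{eff}_{-}({\cal X})$.

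To finish, I would substitute $c^*(M({\cal X}))\cong\zz\oplus K$ into this last isomorphism. The source then decomposes as $\zz\oplus K\oplus K\oplus(K\otimes K)$, and in block form the map becomes the projection $(\mathrm{id},0,\mathrm{id},0)\colon\zz\oplus K\oplus K\oplus(K\otimes K)\to\zz\oplus K$, whose cofibre is $K[1]\oplus(K\otimes K)[1]$. The hypothesis that this map is an isomorphism therefore forces $K=0$, which completes the argument. The main obstacle is precisely this last step: carrying the block-decomposition reasoning through rigorously in the triangulated setting, which ultimately reduces to the elementary observation that a split projection $A\oplus B\to A$ is an isomorphism if and only if $B=0$.
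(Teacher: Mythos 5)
Your argument is correct, but it takes a genuinely different route from the paper's. The paper also starts from the triangle identity (the unit $b\colon c^*\to c^*Lc_{\#}c^*$ is a section of $a$), but then shows directly that $b\circ a$ is the identity: by adjunction, $b\circ a$ and $\mathrm{id}$ correspond to the two maps $p_1,p_2\colon Lc_{\#}c^*Lc_{\#}c^*\to Lc_{\#}c^*$ collapsing one or the other copy of $Lc_{\#}c^*$, and under Proposition \ref{projform} these become the maps $N\otimes M({\cal X})\otimes M({\cal X})\to N\otimes M({\cal X})$ induced by the two projections $M({\cal X}\times{\cal X})\to M({\cal X})$; since both projections are isomorphisms (embeddedness) and have the diagonal as a common section, they are equal, so $p_1=p_2$ and $a$ is invertible with inverse $b$. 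You instead reduce to showing that $c^*(\bar u)\colon c^*M({\cal X})\to\zz$ is an isomorphism, split off $\zz$ via the triangle identity, and kill the complementary summand $K$ by the block decomposition of $c^*(\bar u)\otimes\mathrm{id}$, which embeddedness forces to be an isomorphism; the elementary fact that a split projection onto a summand is an isomorphism only if the complement vanishes finishes it. Both arguments rest on the same implicit compatibility, namely that under the identification of Proposition \ref{projform} the adjunction maps correspond to maps induced by the projections of ${\cal X}\times{\cal X}$ --- your claim that $a_K$ becomes $c^*(\bar u)\otimes\mathrm{id}_{c^*K}$ is exactly of this kind and is the one step to check with the same care as the paper's ``one can easily see''. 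What your route buys is the explicit intermediate statement $c^*M({\cal X})\cong\zz$ in $DM^{eff}_{-}({\cal X})$, which is of independent interest; what the paper's route buys is economy: no choice of splitting or complement is needed (in particular Proposition \ref{summands} is not really needed in your argument either, since a split epimorphism in a triangulated category already yields a direct sum decomposition via the split triangle), and the observation that two isomorphisms with a common section coincide does all the work.
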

\begin{proof}
By the definition of adjoint functors the obvious map
$b:c^*\sr c^*Lc_{\#}c^*$ is a section of $a$. Hence, it is sufficient
to show that
$$b\circ a:c^*Lc_{\#}c^*\sr c^*Lc_{\#}c^*$$
is the identity. This map is adjoint to the map
$$p_1:Lc_{\#}c^*Lc_{\#}c^*\sr Lc_{\#}c^*$$
which collapses the second copy of the composition $Lc_{\#}c^*$ to the
identity. On the other hand the identity on $c^*Lc_{\#}c^*$ is adjoint
in the same way to the map
$$p_2:Lc_{\#}c^*Lc_{\#}c^*\sr Lc_{\#}c^*$$
which collapses the first copy of the composition $Lc_{\#}c^*$ to the
identity. It remains to show that $p_1=p_2$. By Proposition
\ref{projform} we have
$$Lc_{\#}c^*(N)=N\oo M({\cal X})$$
and one can easily see that $p_1$ and $p_2$ can be identified with the
morphisms 
$$N\oo M({\cal X})\oo M({\cal X})\sr N\oo M({\cal X})$$
defined by the two projections
$$M({\cal X})\oo M({\cal X})=M({\cal X}\times {\cal X})\sr M({\cal
X})$$
These two projections are isomorphisms by our assumption on $\cal X$
and since the diagonal is their common section we conclude that they
are equal.
\end{proof}
\begin{lemma}
\llabel{rl0} Let $\cal X$ be an embedded simplicial scheme. Then for
any object $M$ of $DM^{eff}_{-}(S)$ and any object $N$ of
$DM^{eff}_{-}({\cal X})$ the natural map
\begin{eq}
\llabel{rl0eq}
Hom(c^*M,N)\sr Hom(Lc_{\#}c^*M,Lc_{\#}N)
\end{eq}
is bijective.
\end{lemma}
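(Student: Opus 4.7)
By the adjunction $Lc_\#\dashv c^*$ I will first rewrite the target: the natural isomorphism
\[
Hom_{DM^{eff}_-({\cal X})}(Lc_\# c^*M, Lc_\# N)\cong Hom_{DM^{eff}_-({\cal X})}(c^*M, c^*Lc_\# N)
\]
carries the map (\ref{rl0eq}) to post-composition with the unit $\eta_N: N\to c^*Lc_\# N$. Thus the lemma is equivalent to showing that $(\eta_N)_*$ is a bijection for every $M$ and $N$, and forming the distinguished triangle $N\to c^*Lc_\# N\to C\to N[1]$ reduces (via the long exact sequence, allowing $M$ to vary) to the vanishing $Hom(c^*M, C[i])=0$ for all integers $i$.

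The main inputs will be Lemma \ref{rlminusone} and Proposition \ref{projform}. From the triangle identity $c^*\epsilon\circ\eta c^*=\mathrm{id}_{c^*}$ and Lemma \ref{rlminusone} (``$c^*\epsilon$ is iso''), the unit $\eta_{c^*K}$ is iso for every $K\in DM^{eff}_-(S)$; dually, $Lc_\#\eta_N$ is split monic with retraction $\epsilon_{Lc_\# N}$, identified via Proposition \ref{projform} with the augmentation $Lc_\# N\oo M({\cal X})\to Lc_\# N$. The first real step will be to show $\epsilon_{Lc_\# N}$ is iso for ${\cal X}$ embedded, hence $Lc_\# C\cong 0$. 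Resolving $N$ by representables via $Lres$, I reduce to $N=\zz_{tr}(Y,i)$ with $Y\in Sm/{\cal X}_i$, so $Lc_\# N=M(Y)$; the claim becomes that the projection $Y\times{\cal X}\to Y$ is a motivic equivalence. Using Lemma \ref{opyat}'s description of embedded ${\cal X}$ (namely $\pi_0({\cal X})\hookrightarrow \mathrm{pt}$ and ${\cal X}\to\pi_0({\cal X})$ a local equivalence), together with $\pi_0(Y)\subseteq\pi_0({\cal X})$ coming from the structure map $Y\to{\cal X}_i$, a direct hensel-local sections computation shows $Y\times{\cal X}\to Y$ is Nisnevich-locally (hence motivically) an equivalence.

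The main obstacle will be the passage from $Lc_\# C\cong 0$ to the desired $Hom$-vanishing, since $C$ itself need not be zero. My plan here is to resolve $c^*M$ as $c^*M\stackrel{L}{\oo} L\zz_\ast$ using Lemma \ref{res}; the projection formula for $r_j$ identifies each term $c^*M\oo \zz_{tr}({\cal X}_j,j)\cong r_{j,\#}(p_j^*M)$, so that by $Lr_{j,\#}\dashv r_j^*$,
\[
Hom(r_{j,\#}(p_j^*M),C[i])\cong Hom_{DM({\cal X}_j)}(p_j^*M, r_j^*C[i]).
\]
Since $r_j^*C$ need not vanish termwise, the hardest step will be to assemble these contributions via the total-complex spectral sequence and force cancellation; this will use the embedded hypothesis levelwise together with the explicit identification of $r_j^*(\eta_N)$ as the closed-inclusion map from the direct sum of fiber products $\oplus_\phi Y\times_\phi {\cal X}_j$ into the Cartesian product $Y\times_S {\cal X}_j$ (cf. Equation (\ref{need6})).
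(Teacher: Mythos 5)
Your opening move is the same as the paper's: by the adjunction $Lc_{\#}\dashv c^*$ the map (\ref{rl0eq}) becomes post-composition with the unit $\eta_N\colon N\sr c^*Lc_{\#}N$. But the paper finishes right there, in two lines: it identifies the right-hand side by adjunction and observes that under this identification the map is induced by the natural transformation $a\colon c^*Lc_{\#}c^*\sr c^*$, which is an isomorphism by Lemma \ref{rlminusone}; no cone is formed and no geometric input is used. Your route diverges after the first step and has two genuine gaps. The smaller one is in what you call the ``first real step'': you read Lemma \ref{opyat} as a description of embedded simplicial schemes, but it is only a sufficient criterion. ``Embedded'' is defined by the motivic condition that the projections $M({\cal X}\times{\cal X})\sr M({\cal X})$ are isomorphisms, and (as noted in the section on coefficients) this notion depends on the coefficient ring, so it cannot be equivalent to the geometric hypotheses of Lemma \ref{opyat}; hence your Hensel-local-sections argument that $Y\times{\cal X}\sr Y$ is a local equivalence does not apply to a general embedded ${\cal X}$. (The conclusion you want -- that $M(Y)\oo M({\cal X})\sr M(Y)$ is invertible when $Y$ admits a map to a term of ${\cal X}$ -- is correct, but the right argument is motivic, in the style of Lemma \ref{donotneed2}: the factorization of $M(Y)\sr\zz$ through $M({\cal X})$ splits the projection, and the full subcategory of $K$ with $K\oo M({\cal X})\sr K$ invertible is thick and contains $M(Y)\oo M({\cal X})$ because ${\cal X}$ is embedded.)

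The decisive gap is the passage from $Lc_{\#}C\cong 0$ to $Hom(c^*M,C[i])=0$, which you yourself call the hardest step and leave as a plan (``assemble these contributions via the total-complex spectral sequence and force cancellation''). Since $c^*M$ sits in the \emph{source} of the Hom, no adjunction converts $Lc_{\#}C\cong 0$ into the required vanishing, and the implication is not a formal consequence of the properties you have established: for the adjunction $-\stackrel{L}{\oo}\qq\dashv(\mbox{restriction of scalars})$ between $D_{-}(\qq)$ and $D_{-}(Ab)$, the analogues of Lemma \ref{rlminusone}, of the embeddedness of the unit, and of your claim that $\epsilon_{Lc_{\#}N}$ is invertible all hold, yet the cone of the unit $\zz\sr\qq$ is killed by the left adjoint while still receiving nonzero maps from $\qq$, an object in the image of the right adjoint. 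So whatever cancellation you hope the spectral sequence to produce is precisely the content of the lemma and would have to use the specific structure of $DM^{eff}_{-}({\cal X})$ in a way you have not supplied. As it stands, the proposal does not prove the statement, whereas the paper's argument reduces it directly to Lemma \ref{rlminusone}.
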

\begin{proof}
By adjunction, the right hand side of (\ref{rl0eq}) can be identified
with $Hom(c^*Lc_{\#}c^*M,N)$ and with respect to this identification
the map (\ref{rl0eq}) is defined the natural transformation
$a:c^*Lc_{\#}c^*\sr c^*$. The statement of the lemma follows now from
Lemma \ref{rlminusone}.
\end{proof}
For any $\cal X$ let $DM_{\cal X}$ denote the localizing subcategory
in $DM^{eff}_{-}({\cal X})$ which is generated by objects of the form
$c^*(M)$ for $M$ in $DM_{-}^{eff}(S)$. Note that $DM_{\cal X}$
contains the category $DT({\cal X})$ of Tate motives over $\cal
X$. Lemma \ref{rl0} immediately implies the following result.
\begin{lemma}
\llabel{rl1}
If $\cal X$ is embedded then $Lc_{\#}$ defines a full embedding
$$Lc_{\#}:DM_{\cal X}\sr DM_{-}^{eff}(S)$$
\end{lemma}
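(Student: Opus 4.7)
The plan is to run a standard localizing-subcategory argument with the target fixed and Lemma \ref{rl0} supplying the base case. Concretely, for each $Y$ in $DM^{eff}_{-}({\cal X})$ let $\mathcal{C}_Y$ be the full subcategory of $DM^{eff}_{-}({\cal X})$ consisting of those $X$ for which the natural map
$$Hom(X,Y)\sr Hom(Lc_{\#}X,\,Lc_{\#}Y)$$
induced by $Lc_{\#}$ is a bijection. Lemma \ref{rl0} says precisely that $c^*(M)\in \mathcal{C}_Y$ for every $M\in DM^{eff}_{-}(S)$ and every $Y$. Once I show that each $\mathcal{C}_Y$ is a localizing subcategory, I will conclude that $\mathcal{C}_Y\supset DM_{\cal X}$ for all $Y$, and in particular for $Y\in DM_{\cal X}$ this is exactly the statement that $Lc_{\#}$ is fully faithful on $DM_{\cal X}$.

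To verify that $\mathcal{C}_Y$ is localizing I use only formal properties of the adjoint pair $(Lc_{\#},c^*)$ together with the fact that $Lc_{\#}$, being a left adjoint between triangulated categories, is triangulated and preserves arbitrary direct sums. Closure under shifts is automatic: $Hom(X[n],Y)=Hom(X,Y[-n])$ and $Lc_{\#}(X[n])=Lc_{\#}(X)[n]$, so the map at $X[n]$ is the map at $X$ with $Y$ replaced by $Y[-n]$. Closure under cones: given $X_1\sr X_2\sr X_3\sr X_1[1]$ a distinguished triangle with $X_1, X_2\in \mathcal{C}_Y$, applying $Hom(-,Y)$ and $Hom(Lc_{\#}(-),Lc_{\#}Y)$ gives two long exact sequences connected by the comparison map, and the Five Lemma forces $X_3\in \mathcal{C}_Y$. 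Closure under direct sums: both $Hom(-,Y)$ turns coproducts into products and $Lc_{\#}$ preserves coproducts, so the comparison at $\oplus X_i$ is the product of the comparisons at the $X_i$, which is an isomorphism when each $X_i\in \mathcal{C}_Y$.

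Combining these three closure properties with the base case $c^*(M)\in \mathcal{C}_Y$ yields $\mathcal{C}_Y\supset DM_{\cal X}$ (the smallest such localizing subcategory). Specializing to $Y\in DM_{\cal X}$ gives the lemma.

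The substance of the argument has already been absorbed into Lemmas \ref{rlminusone} and \ref{rl0}, where the embeddedness hypothesis on $\cal X$ is used via the projection formula of Proposition \ref{projform} and the fact that the two projections $M({\cal X}\times {\cal X})\sr M({\cal X})$ agree. Thus the only real concern in writing out the proof is to make sure the formal closure properties of $\mathcal{C}_Y$ hold with no additional hypothesis on $Y$; since the comparison map in the definition of $\mathcal{C}_Y$ makes sense for any $Y\in DM^{eff}_{-}({\cal X})$, there is no obstacle.
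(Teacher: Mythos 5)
Your argument is correct and is exactly the paper's intended route: the paper simply asserts that Lemma \ref{rl0} ``immediately implies'' Lemma \ref{rl1}, and your d\'evissage over the localizing subcategory $DM_{\cal X}$ (base case from Lemma \ref{rl0}, closure of $\mathcal{C}_Y$ under shifts, triangles and direct sums via the adjoint-functor formalities) is precisely the standard way to fill in that implication. The only point you leave tacit is closure of $\mathcal{C}_Y$ under direct summands, which is needed since $DM_{\cal X}$ is thick, but it is immediate because the comparison map for a summand is a retract of the comparison map for the sum.
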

\begin{lemma}
\llabel{rl2}
If $\cal X$ is embedded and $M,N$ are objects of $DM_{\cal X}$ then
the canonical morphism
\begin{eq}
\llabel{embten}
Lc_{\#}(M\oo N)\sr Lc_{\#}(M)\oo Lc_{\#}(N)
\end{eq}
is an isomorphism.
\end{lemma}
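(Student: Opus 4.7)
The strategy is to reduce the claim to the case where both $M$ and $N$ are pullbacks of the form $c^*(-)$, and then compute both sides explicitly using the projection formula and the embedded hypothesis. Both sides of (\ref{embten}) are triangulated functors in each of $M$ and $N$, and commute with arbitrary direct sums, and the canonical natural transformation (\ref{embten}) is compatible with triangles and direct sums. Consequently the class of $M\in DM_{\cal X}$ for which (\ref{embten}) is an isomorphism for every $N\in DM_{\cal X}$ is a localizing subcategory of $DM_{\cal X}$; the symmetric statement holds in $N$. Since $DM_{\cal X}$ is by definition the localizing subcategory of $DM^{eff}_{-}({\cal X})$ generated by objects $c^*(M')$ with $M'\in DM^{eff}_{-}(S)$, it is enough to verify the isomorphism when $M=c^*(M')$ and $N=c^*(N')$.

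For such $M$ and $N$, Lemma \ref{ooc} gives $c^*(M')\oo c^*(N')=c^*(M'\oo N')$, and Proposition \ref{projform} identifies
$$Lc_{\#}(c^*(M'\oo N'))\cong M({\cal X})\oo M'\oo N',$$
$$Lc_{\#}(c^*M')\oo Lc_{\#}(c^*N')\cong (M({\cal X})\oo M')\oo(M({\cal X})\oo N')\cong M({\cal X})\oo M({\cal X})\oo M'\oo N'.$$
Under these identifications, the natural transformation (\ref{embten}) becomes $\Delta\oo\mbox{id}_{M'\oo N'}$, where $\Delta:M({\cal X})\sr M({\cal X})\oo M({\cal X})=M({\cal X}\times{\cal X})$ is the morphism induced by the diagonal ${\cal X}\sr {\cal X}\times{\cal X}$.

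By the embedded hypothesis (Definition \ref{embed}), the two projections $p_1,p_2:M({\cal X}\times{\cal X})\sr M({\cal X})$ are isomorphisms. Since $\Delta$ is a common section of $p_1$ and $p_2$, it must equal their common inverse and is therefore itself an isomorphism. Hence $\Delta\oo\mbox{id}_{M'\oo N'}$ is an isomorphism for all $M', N'$, which finishes the argument.

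The main technical point, deferred above, is the identification of (\ref{embten}) with $\Delta\oo\mbox{id}$ under the isomorphisms supplied by Proposition \ref{projform}. This is a bookkeeping step in the monoidal adjunction $Lc_{\#}\dashv c^*$: the transformation (\ref{embten}) is, by construction, the adjoint of the composite $M\oo N\xr{\eta_M\oo\eta_N}c^*Lc_{\#}M\oo c^*Lc_{\#}N=c^*(Lc_{\#}M\oo Lc_{\#}N)$, and for $M=N=\zz$ this unwinds to the comultiplication on $M({\cal X})$ induced by the diagonal, which is exactly the map $\Delta$ above.
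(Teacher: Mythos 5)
Your proof is correct and follows essentially the same route as the paper: reduce by a localizing-subcategory argument to objects pulled back via $c^*$, identify both sides through the projection formula $Lc_{\#}c^*(-)\cong M({\cal X})\oo(-)$, recognize the map as the one induced by the diagonal $M({\cal X})\sr M({\cal X})\oo M({\cal X})$, and conclude from the embedded hypothesis (the paper compresses the final step, while you spell out that the diagonal, being a common section of the projection isomorphisms, is itself an isomorphism).
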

\begin{proof}
Note fisrt that a natural morphism of the form (\ref{embten}) is
defined by adjunction since $c^*$ commutes with the tensor
products. Since both sides of (\ref{embten}) are triangulated functors
in each of the arguments the class of $M$ and $N$ such that
(\ref{embten}) is an isomorphism is a localizing subcategory. It
remains to check that it contains pairs of the form $c^*M(X)$,
$c^*M(Y)$ where $X$, $Y$ are smooth schemes over $S$. With respect to
isomorphisms
$$Lc_{\#}c^*(M(X))=M(X)\oo M({\cal X})$$
$$Lc_{\#}c^*(M(Y))=M(Y)\oo M({\cal X})$$
$$Lc_{\#}c^*(M(X)\oo M(Y))=M(X)\oo M(Y)\oo M({\cal X})$$
the morphism (\ref{embten}) coincides with the morphism
$$M(X)\oo M(Y)\oo M({\cal X})\sr M(X)\oo M({\cal X})\oo M(Y)\oo
M({\cal X})$$
defined by the diagonal of $\cal X$. This morphism is an isomorphism
since $\cal X$ is embedded.
\end{proof}
Lemma \ref{rl2} shows that the restriction of $Lc_{\#}$ to $DM_{\cal
X}$ is almost a tensor functor. Note that it is not really a tensor
functor since 
$$Lc_{\#}(\zz)=M({\cal X})\ne \zz.$$
We also have to distinguish the internal Hom-objects in $DM_{\cal X}$
and $DM^{eff}_{-}(S)$. See Example \ref{inthomex} below.

Starting from this point we assume that ${\cal X}$ is embedded over
$S$. We use Lemma \ref{rl1} to identify $DM_{\cal X}$ with a full
subcategory in $DM_{-}^{eff}(S)$. With respect to this identification
the functor $c^*$ takes $M$ to $M\oo M({\cal X})$.
\begin{lemma}
\llabel{donotneed}
The subcategory $DM_{\cal X}$ coincides with the subcategory of
objects $M$ such that the morphism 
\begin{eq}
\llabel{mproj}
M\oo M({\cal X})\sr M
\end{eq}
is an isomorphism.
\end{lemma}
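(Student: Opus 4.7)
The plan is to prove the two inclusions separately, using the identification from Lemma \ref{rl1} of $DM_{\cal X}$ with a full subcategory of $DM^{eff}_{-}(S)$. Under this identification the generators $c^*(N) \in DM_{\cal X}$ go to $Lc_{\#}c^*(N) = N \otimes M({\cal X})$ by Proposition \ref{projform}, so the image of $DM_{\cal X}$ in $DM^{eff}_{-}(S)$ is the localizing subcategory generated by objects of the form $N \otimes M({\cal X})$, $N \in DM^{eff}_{-}(S)$. The morphism (\ref{mproj}) is the one induced by the structure map $M({\cal X}) \to \zz$.

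For the inclusion $DM_{\cal X} \subseteq \{M : M \otimes M({\cal X}) \ii M\}$, I would first verify the condition on the generators $M = N \otimes M({\cal X})$. Here
\[
M \otimes M({\cal X}) = N \otimes M({\cal X}) \otimes M({\cal X}) = N \otimes M({\cal X} \times {\cal X}),
\]
and (\ref{mproj}) becomes $\mathrm{id}_N$ tensored with the morphism $M({\cal X}\times {\cal X}) \sr M({\cal X})$ induced by the first projection, which is an isomorphism precisely by the embedded hypothesis (Definition \ref{embed}). The class of $M$ for which (\ref{mproj}) is an isomorphism is closed under distinguished triangles, arbitrary direct sums, and direct summands, since $-\otimes M({\cal X})$ is triangulated and commutes with direct sums. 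Hence it contains the whole localizing subcategory generated by the $N\otimes M({\cal X})$, i.e.\ $DM_{\cal X}$.

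For the reverse inclusion, if $M$ satisfies that (\ref{mproj}) is an isomorphism, then $M$ is isomorphic to $M \otimes M({\cal X})$. But $M\otimes M({\cal X}) = Lc_{\#}c^*(M)$ by Proposition \ref{projform}, and this lies in the essential image of $Lc_{\#}$ applied to $c^*(M) \in DM_{\cal X}$, hence in $DM_{\cal X}$ under our identification.

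I do not anticipate any serious obstacle; the only subtle point is keeping straight the identification of $DM_{\cal X}$ with its image under $Lc_{\#}$ and recognizing (\ref{mproj}) as a projection induced on $M({\cal X}\times {\cal X})$, but both are immediate from Lemma \ref{rl1} and Proposition \ref{projform}.
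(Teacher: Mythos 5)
Your proposal is correct and follows essentially the same route as the paper: one inclusion by checking (\ref{mproj}) on the generators $N\oo M({\cal X})$ using the embedded hypothesis and the fact that the class where (\ref{mproj}) is an isomorphism is localizing, and the other by noting that $M\cong M\oo M({\cal X})=Lc_{\#}c^*(M)$ lies in $DM_{\cal X}$ under the identification of Lemma \ref{rl1}. The only cosmetic difference is that the paper takes generators of the form $M(X)\oo M({\cal X})$ with $X$ smooth rather than arbitrary $N\oo M({\cal X})$, which changes nothing.
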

\begin{proof}
Let $D$ be the subcategory of $M$ such that (\ref{mproj}) is an
isomorphism. As was mentioned above the functor $c^*$ takes a motive
$M$ to $M\oo M({\cal X})$ so $D$ is contained in $DM_{\cal X}$. Since
$D$ is a localizing subcategory and $DM_{\cal X}$ is generated by
motives of the form $M(X)\oo M({\cal X})$ the opposite inclusion
follows from the fact that for any $X$ the morphism
$$M(X)\oo M({\cal X})\oo M({\cal X})\sr M(X)\oo M({\cal X})$$
is an isomorphism.
\end{proof}
\begin{remark}\rm
Lemma \ref{donotneed} show that $DM_{\cal X}$ is an ideal in
$DM^{eff}_{-}(S)$ i.e. for any $K$ and any $M$ in $DM_{\cal X}$ the
tensor product $K\oo M$ is in $DM_{\cal X}$.
\end{remark}
\begin{lemma}
\llabel{rl3}
For $M$ in $DM_{\cal X}$ and $N\in DM^{eff}_{-}(S)$ the natural map
\begin{eq}
\llabel{ebi}
Hom(M, N\oo M({\cal X}))\sr Hom(M,N)
\end{eq}
is an isomorphism.
\end{lemma}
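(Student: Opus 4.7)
The natural map (\ref{ebi}) is postcomposition with $p_N := Id_N \oo \epsilon : N \oo M({\cal X}) \to N$, where $\epsilon: M({\cal X}) \to \zz$ is the counit of the $(Lc_\#, c^*)$-adjunction at $\zz$ (under the identification $M({\cal X}) = Lc_\# c^*(\zz)$ from Proposition \ref{projform}). My strategy is to use the adjunction to transport both Hom-groups into $DM^{eff}_-({\cal X})$, where the tensor factor $M({\cal X})$ will disappear thanks to Lemma \ref{rlminusone}.

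First, I would invoke Lemma \ref{rl1} to write $M = Lc_\#(\tilde M)$ for some (unique up to isomorphism) $\tilde M \in DM_{\cal X} \subset DM^{eff}_-({\cal X})$. The $(Lc_\#, c^*)$-adjunction then identifies
$$Hom(M, N \oo M({\cal X})) = Hom_{DM^{eff}_-({\cal X})}(\tilde M, c^*(N \oo M({\cal X})))$$
and $Hom(M, N) = Hom_{DM^{eff}_-({\cal X})}(\tilde M, c^*(N))$, and by naturality of the adjunction it carries the map (\ref{ebi}) to postcomposition with $c^*(p_N)$.

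Next, I would apply Lemma \ref{ooc} to get $c^*(N \oo M({\cal X})) = c^*(N) \oo c^*(M({\cal X}))$ and $c^*(p_N) = Id_{c^*(N)} \oo c^*(\epsilon)$. The map $c^*(\epsilon) : c^* Lc_\# c^*(\zz) \to c^*(\zz) = \zz$ is precisely the value at $K = \zz$ of the natural transformation $a$ of Lemma \ref{rlminusone}, so it is an isomorphism (this is where the embeddedness hypothesis enters). Hence $c^*(p_N)$ is an isomorphism, postcomposition with it is a bijection, and (\ref{ebi}) is a bijection.

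The only step that requires any care is the identification of $c^*(\epsilon)$ with $a_\zz$, but this is just a matter of unwinding the definition of $a$ as the $c^*$-whiskered counit of the adjunction together with the fact that the counit at $\zz$ is $\epsilon$. I therefore anticipate no real obstacle: the whole argument is essentially a bookkeeping exercise in adjunctions, whose content is packaged into Lemmas \ref{rl1}, \ref{ooc}, and \ref{rlminusone}.
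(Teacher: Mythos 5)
Your proof is correct, but it takes a genuinely different route from the paper's. The paper argues directly inside $DM^{eff}_{-}(S)$: since $M$ lies in $DM_{\cal X}$, Lemma \ref{donotneed} says the projection $\phi:M\oo M({\cal X})\sr M$ of the form (\ref{mproj}) is an isomorphism, and the map $f\mapsto (f\oo Id_{M({\cal X})})\circ\phi^{-1}$ is checked to be a two-sided inverse to (\ref{ebi}) --- a two-line argument with no adjunction bookkeeping (the verification that it is also a left inverse quietly uses that the two projections $M({\cal X})\oo M({\cal X})\sr M({\cal X})$ coincide, which is again the embeddedness hypothesis). You instead write $M=Lc_{\#}(\tilde M)$ via the identification of Lemma \ref{rl1}, transport both Hom-groups across the $(Lc_{\#},c^*)$-adjunction, use the monoidality of $c^*$ (Lemma \ref{ooc}) to rewrite the transported map as postcomposition with $Id_{c^*N}\oo c^*(\epsilon)$, and conclude from Lemma \ref{rlminusone} that $c^*(\epsilon)=a_{\zz}$ is an isomorphism; in effect you re-derive the lemma as the adjoint transpose of Lemma \ref{rlminusone}, in the same spirit as the paper's own proof of Lemma \ref{rl0}. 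What your route buys is that everything is reduced to previously established statements with no diagram check, and it makes transparent that embeddedness enters only through the invertibility of $a$; what the paper's route buys is brevity, the fact that it never leaves $DM^{eff}_{-}(S)$, and an explicit inverse to (\ref{ebi}), which is convenient for the later uses of the lemma (e.g.\ in Lemma \ref{intres0}). Both arguments rest on the same input, since Lemma \ref{donotneed} and Lemma \ref{rlminusone} are two faces of the same embeddedness property.
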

\begin{proof}
Consider the map
$$Hom(M,N)\sr Hom(M, N\oo M({\cal X}))$$
which takes $f$ to $(f\oo Id_{M({\cal X})})\circ \phi^{-1}$ where
$\phi$ is the morphism of the form (\ref{mproj}). One can easily see
that this map is both the right and the left inverse to (\ref{ebi}).
\end{proof}
Lemma \ref{rl3} has the following straightforward corollary.
\begin{lemma}
\llabel{intres0}
Let $M,N$ be objects of $DM_{\cal X}$, $P$ an object of
$DM^{eff}_{-}(S)$ and 
$$e:M\oo N\sr P$$
a morphism such that $(N,e)$ is an internal Hom-object from
$M$ to $P$ in $DM^{eff}_{-}(S)$. Define $e_{\cal X}$ as the 
morphism
$$e_{\cal X}:M\oo N\sr P\oo M({\cal X})$$
corresponding to $e$ by Lemma \ref{rl3}. Then $(N,e_{\cal X})$ is an
internal Hom-object from $M$ to $P\oo M({\cal X})$ in $DM_{\cal X}$.
\end{lemma}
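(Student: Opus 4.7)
The plan is to reduce the internal-Hom property in $DM_{\cal X}$ to the hypothesized internal-Hom property in $DM^{eff}_{-}(S)$ by means of the bijection provided by Lemma \ref{rl3}. Fix $K\in DM_{\cal X}$; I must show that the map
$$\alpha: Hom(K,N)\lr Hom(K\oo M, P\oo M({\cal X})),\quad f\mapsto e_{\cal X}\circ(f\oo Id_M)$$
is a bijection, where all $Hom$'s are computed in $DM^{eff}_{-}(S)$ (using the full embedding of $DM_{\cal X}$).

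My first step would be to observe that $K\oo M\in DM_{\cal X}$, since $DM_{\cal X}$ is an ideal in $DM^{eff}_{-}(S)$ by the remark following Lemma \ref{donotneed}. Thus Lemma \ref{rl3} applies to the pair $(K\oo M, P)$ and gives a natural bijection
$$\Phi: Hom(K\oo M, P\oo M({\cal X}))\stackrel{\sim}{\lr} Hom(K\oo M, P),$$
whose inverse is explicitly $h\mapsto (h\oo Id_{M({\cal X})})\circ \phi^{-1}_{K\oo M}$, with $\phi$ denoting the projection isomorphism of (\ref{mproj}).

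The second step would be to verify that $\Phi\circ\alpha$ is exactly the map $f\mapsto e\circ(f\oo Id_M)$. By construction of $e_{\cal X}$ one has $e_{\cal X}=(e\oo Id_{M({\cal X})})\circ \phi^{-1}_{M\oo N}$; substituting and using the naturality of $\phi$ (applied to the morphism $f\oo Id_M: K\oo M\to N\oo M$ in $DM_{\cal X}$), one computes
$$e_{\cal X}\circ (f\oo Id_M)=\bigl((e\circ(f\oo Id_M))\oo Id_{M({\cal X})}\bigr)\circ \phi^{-1}_{K\oo M},$$
which is precisely $\Phi^{-1}\bigl(e\circ(f\oo Id_M)\bigr)$, giving the desired equality.

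Finally, the hypothesis that $(N,e)$ is an internal Hom-object from $M$ to $P$ in $DM^{eff}_{-}(S)$ means that $f\mapsto e\circ(f\oo Id_M)$ is a bijection $Hom(K,N)\to Hom(K\oo M,P)$, and since $\Phi$ is a bijection, $\alpha$ is too, completing the argument. The one point needing care is the naturality of $\phi$ used in the second step; this is routine from the construction in Lemma \ref{donotneed} and the fact that both projections $M({\cal X})\oo M({\cal X})\to M({\cal X})$ are isomorphisms in the embedded case, so I expect no real obstacle.
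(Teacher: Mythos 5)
Your argument is correct and is exactly the intended one: the paper states Lemma \ref{intres0} as a ``straightforward corollary'' of Lemma \ref{rl3}, and your proof simply spells out that reduction (using that $K\oo M$ again lies in the ideal $DM_{\cal X}$, the explicit inverse from the proof of Lemma \ref{rl3}, and naturality of the projection (\ref{mproj})), together with the full embedding of Lemma \ref{rl1} to compute the Hom-groups in $DM^{eff}_{-}(S)$. No discrepancies with the paper's approach.
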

\begin{definition}
\llabel{restr}
An object $M$ in $DM_{\cal X}$ is called restricted if for any $N$
in $DM^{eff}_{-}(S)$ the natural map
\begin{eq}
\llabel{restreq}
Hom(N, M)\sr Hom(N\oo M({\cal X}), M)
\end{eq}
is an isomorphism.
\end{definition}
For our next result we need to recall the motivic duality theorem.
For a smooth variety $X$ over a field $k$ and a smooth subvariety $Z$
of $X$ of pure codimension $d$ the Gysin distinguished triangle
defines the motivic cohomology class of $Z$ in $X$ of the form
$M(X)\sr \zz(d)[2d]$. In particular for $X$ of pure dimension $d$ the
diagonal gives a morphism $M(X)\oo M(X)\sr \zz(d)[2d]$ which we denote
by $\Delta^*$. The following motivic duality theorm is proved in
\cite[]{}.
\begin{theorem}
\llabel{motdual} For a smooth projective variety $X$ of pure dimension
$d$ over a perfect field $k$ the pair $(M(X),\Delta^*)$ is the
internal Hom-object from $M(X)$ to $\zz(d)[2d]$ (see \ref{appendix1}).
\end{theorem}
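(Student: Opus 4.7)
The plan is to verify the universal property of the internal Hom directly. For every $W \in DM^{eff}_-(k)$, consider the composition map
$$\Phi_W : Hom(W, M(X)) \lr Hom(M(X) \oo W, \zz(d)[2d]), \quad f \mapsto \Delta^* \circ (\text{id}_{M(X)} \oo f).$$
I must show $\Phi_W$ is a bijection for all $W$. Both sides are cohomological functors in $W$ that send triangles to long exact sequences and respect small direct sums, so by the five lemma together with the fact that $DM^{eff}_-(k)$ is generated by motives $M(Y)$ of smooth $k$-schemes, it suffices to handle $W = M(Y)[n]$. Using the identification $M(X) \oo M(Y) = M(X \times Y)$, the claim becomes the Poincar\'e duality statement
$$Hom(M(Y)[n], M(X)) \;\cong\; H^{2d-n, d}(X \times Y, \zz).$$

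My approach is to construct a coevaluation
$$\eta : \zz(d)[2d] \lr M(X) \oo M(X)$$
and verify the two zigzag identities
$$(\Delta^* \oo \text{id}) \circ (\text{id} \oo \eta) = \text{id}_{M(X)(d)[2d]}, \qquad (\text{id} \oo \Delta^*) \circ (\eta \oo \text{id}) = \text{id}_{M(X)(d)[2d]}.$$
These identities can be formulated entirely within $DM^{eff}_-(k)$ by keeping all Tate twists on the positive side, and once both hold a purely formal manipulation produces the inverse of $\Phi_W$ (the Tate-twisted analogue of the standard rigidity formula). To build $\eta$ I induct on $d = \dim X$. The base case $X = {\bf P}^n$ is explicit from the Tate decomposition $M({\bf P}^n) = \bigoplus_{i=0}^n \zz(i)[2i]$. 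For the inductive step, I embed $X \hookrightarrow {\bf P}^N$ and select, by Bertini over a separable extension of $k$ if necessary, a smooth hyperplane section $H \subset X$; the Gysin distinguished triangle
$$M(X \setminus H) \sr M(X) \sr M(H)(1)[2] \sr M(X \setminus H)[1]$$
reduces the construction of $\eta$ for $X$ to the duality for the smaller-dimensional $H$ together with a duality statement for the smooth quasi-projective $X \setminus H$, which can be treated by compactifying in a smooth pencil containing $X$ and using the inductive hypothesis together with the projective bundle formula.

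The main obstacle will be the verification of the zigzag identities. Geometrically, $(\text{id} \oo \Delta^*) \circ (\eta \oo \text{id})$ encodes the self-intersection formula $\Delta^! \Delta_*[X] = [X]$ for the diagonal embedding $\Delta_X : X \hookrightarrow X \times X$, which is standard at the level of Chow groups; but I must upgrade this to an equality of morphisms in $DM^{eff}_-(k)$. This requires careful bookkeeping of the Gysin triangles used to assemble $\eta$, together with compatibility of the cycle class $[\Delta_X] \in H^{2d,d}(X \times X)$ representing $\Delta^*$ with pullback along the regular embedding $\Delta_X$. Once the two triangle identities hold, the universal property of the internal Hom is automatic and $(M(X), \Delta^*)$ is identified as the claimed internal Hom-object from $M(X)$ to $\zz(d)[2d]$.
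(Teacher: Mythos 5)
\textbf{On Theorem \ref{motdual}.} Be aware that the paper does not prove this statement at all: it is recalled as the known motivic duality theorem and attributed to the literature (the citation slot is even left empty), the actual proof going back to the duality theorem for bivariant cycle cohomology of Friedlander--Voevodsky and to \cite{collection}, with the extension to arbitrary perfect fields resting on the cancellation theorem. So there is no internal argument to compare yours with; your proposal has to be judged as a free-standing proof of a genuinely substantial theorem, and as such it is a plan with real gaps rather than a proof.

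The overall strategy --- reduce the universal property to generators $M(Y)[n]$, then produce a coevaluation $\eta:\zz(d)[2d]\sr M(X)\oo M(X)$ and verify the two zigzag identities with all Tate twists kept effective --- is a legitimate and known route (duality data via Gysin morphisms). But two steps would not survive as written. First, the inductive construction of $\eta$ through a smooth hyperplane section $H\subset X$ founders on the open piece $X\setminus H$: it is not projective, the theorem being proved says nothing about it, and in the effective category a smooth open variety is simply not dualizable in this shape --- the correct statement involves motives with compact support ($M^c$) and is essentially the Friedlander--Voevodsky duality you are trying to avoid; ``compactifying in a smooth pencil'' does not repair this, and the Bertini step only yields $H$ after a separable extension of $k$ with no argument for descending the construction back to $k$. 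In fact no induction is needed to build $\eta$: one can take $\eta=M(\Delta)\circ tr_X$, where $tr_X:\zz(d)[2d]\sr M(X)$ comes from an embedding $X\subset {\bf P}^N$, the Gysin morphism $M({\bf P}^N)\sr M(X)(N-d)[2(N-d)]$, the projective bundle formula and cancellation. Second, and more seriously, once $\eta$ exists the entire weight of the proof falls on the zigzag identities, and the ``self-intersection formula at the level of Chow groups'' is not enough: you need these equalities as morphisms in $DM^{eff}_{-}(k)$, which requires the full package of compatibilities of Gysin triangles (functoriality for composed closed embeddings, transverse base change and excess intersection, the projection formula), none of which is established in the paper or supplied in your sketch. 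That package is precisely the hard content of the theorem, so the proposal identifies the right skeleton but leaves the decisive verifications unproved.
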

\begin{lemma}
\llabel{spcase} Let $S=Spec(k)$ where $k$ is a perfect field and $X$
be a smooth projective variety such that $M(X)$ lies in $DM_{\cal
X}$. Then $M(X)$ is restricted.
\end{lemma}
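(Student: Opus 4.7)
The plan is to reduce the restriction condition for $M(X)$ to a statement about the projection $M({\cal X})\sr \zz$ that is then trivialized by the hypothesis $M(X)\in DM_{\cal X}$, using Theorem \ref{motdual} as the bridge.

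First I would apply the motivic duality theorem. Since $X$ is smooth and projective of some pure dimension $d$ (we may reduce to this case by splitting $X$ into its connected components and treating each separately), Theorem \ref{motdual} identifies $M(X)$ with the internal Hom-object $\uu{Hom}(M(X),\zz(d)[2d])$ in $DM^{eff}_{-}(S)$. Thus for any object $L$ of $DM^{eff}_{-}(S)$, the evaluation pairing $\Delta^*:M(X)\oo M(X)\sr \zz(d)[2d]$ induces a natural bijection
$$Hom(L,M(X))\ii Hom(L\oo M(X),\zz(d)[2d]).$$
Applying this with $L=N$ and with $L=N\oo M({\cal X})$, the natural map (\ref{restreq}) in Definition \ref{restr} fits into a commutative diagram of bijections/natural maps and is identified with the map
$$Hom(N\oo M(X),\zz(d)[2d])\sr Hom(N\oo M({\cal X})\oo M(X),\zz(d)[2d])$$
given by pre-composition with $p\oo Id_{M(X)}$, where $p:N\oo M({\cal X})\sr N$ is the canonical projection induced by $M({\cal X})\sr\zz$.

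Next I would invoke Lemma \ref{donotneed}. The hypothesis $M(X)\in DM_{\cal X}$ says exactly that the projection
$$M({\cal X})\oo M(X)\sr M(X)$$
is an isomorphism. Tensoring with $N$ on the outside and identifying $\zz\oo M(X)$ with $M(X)$, the morphism $p\oo Id_{M(X)}:N\oo M({\cal X})\oo M(X)\sr N\oo M(X)$ is precisely the image of this isomorphism (tensored with $N$), hence is itself an isomorphism in $DM^{eff}_{-}(S)$. Pre-composition with an isomorphism is a bijection on Hom sets, so the map (\ref{restreq}) is a bijection and $M(X)$ is restricted.

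The only subtlety, and the step that needs the most care, is verifying that the restriction map (\ref{restreq}) genuinely corresponds under the duality identification to pre-composition with $p\oo Id_{M(X)}$. This is a naturality check: the duality bijection $Hom(L,M(X))\cong Hom(L\oo M(X),\zz(d)[2d])$ is natural in $L$, so a morphism $L'\sr L$ (in our case $p:N\oo M({\cal X})\sr N$) fits into a commutative square whose two vertical arrows are precisely the two maps we are comparing. Once this naturality is in place, the argument collapses to the observation above.
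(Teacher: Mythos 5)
Your proof is correct and follows essentially the same route as the paper's: both use Theorem \ref{motdual} to identify the map (\ref{restreq}) with the map $Hom(N\oo M(X),\zz(d)[2d])\sr Hom(N\oo M({\cal X})\oo M(X),\zz(d)[2d])$, and then conclude by Lemma \ref{donotneed} together with the hypothesis $M(X)\in DM_{\cal X}$. Your explicit naturality check and the reduction to pure dimension $d$ simply spell out steps the paper leaves implicit.
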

\begin{proof}
We may clearly assume that $X$ has pure dimension $d$ for some $d\ge
0$. Theorem \ref{motdual} implies that for $M=M(X)$ the morphism
(\ref{restreq}) is isomorphic to the morphism
$$Hom(N\oo M(X),\zz(d)[2d])\sr Hom(N\oo M({\cal X})\oo
M(X),\zz(d)[2d])$$
which is an isomorphism by Lemma \ref{donotneed} and our assumption
that $M(X)$ lies in $DM_{\cal X}$.
\end{proof}
\begin{example}\rm\llabel{nonrestr}
The unit object $\zz_{\cal X}=M({\cal X})$ of $DM_{\cal X}$ 
is usually not restricted. Consider for example the case when ${\cal
X}=\check{C}(Spec(E))$ where $E$ is a Galois extension of $k$ with the
Galois group $G$. Then 
$$Hom(\zz[i],M({\cal X}))=H_i(G,\zz)$$  
and this group may be non zero for $i>0$. If $M({\cal X})$ were
restricted this group would be equal to 
$$Hom(M({\cal X})[i],M({\cal X}))=Hom(M({\cal
X})[i],\zz)=H^{-i,0}({\cal X},\zz)$$
which is zero for $i>0$.
\end{example}
\begin{lemma}
\llabel{intres}
Let $M,N$ be objects of $DM_{\cal X}$, $P$ an object of
$DM^{eff}_{-}(S)$ and 
$$e_{\cal X}:M\oo N\sr P\oo M({\cal X})$$
a morphism such that $(N,e_{\cal X})$ is an internal Hom-object from
$M$ to $P\oo M({\cal X})$ in $DM_{\cal X}$. Assume further that $N$
is restricted. Then one has:
\begin{enumerate}
\item $(N,e_{\cal X})$ is an internal Hom-object from
$M$ to $P\oo M({\cal X})$ in $DM^{eff}_{-}(S)$
\item if $e$ is the composition
$$M\oo N\sr P\oo M({\cal X})\sr P$$
then $(N,e)$ is an internal Hom-object from
$M$ to $P$ in $DM^{eff}_{-}(S)$.
\end{enumerate}
\end{lemma}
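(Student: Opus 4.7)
The plan for part (1) is to prove directly that, for any $Q\in DM^{eff}_{-}(S)$, the map $f\mapsto e_{\cal X}\circ(Id_M\oo f)$ from $Hom(Q,N)$ to $Hom(Q\oo M,P\oo M({\cal X}))$ is a bijection, by factoring it as a composite of three natural bijections through the object $Q\oo M({\cal X})=c^*(Q)$, which lies in $DM_{\cal X}$. First, since $N$ is restricted in the sense of Definition \ref{restr}, precomposition with the natural projection $Q\oo M({\cal X})\sr Q$ gives an isomorphism $Hom(Q,N)\stackrel{\sim}{\sr} Hom(Q\oo M({\cal X}),N)$. Second, since $Q\oo M({\cal X})$ lies in $DM_{\cal X}$ and $DM_{\cal X}\hookrightarrow DM^{eff}_{-}(S)$ is fully faithful by Lemma \ref{rl1}, the internal Hom property of $(N,e_{\cal X})$ inside $DM_{\cal X}$ yields a bijection $Hom(Q\oo M({\cal X}),N)\stackrel{\sim}{\sr} Hom(M\oo Q\oo M({\cal X}),P\oo M({\cal X}))$. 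Third, since $M\in DM_{\cal X}$, Lemma \ref{donotneed} identifies $M\oo M({\cal X})$ with $M$ via the isomorphism of (\ref{mproj}), producing a bijection with $Hom(M\oo Q,P\oo M({\cal X}))$. Unwinding these identifications, the composite is exactly $f\mapsto e_{\cal X}\circ(Id_M\oo f)$, which establishes (1).

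For part (2), I plan to compose the isomorphism of (1) with Lemma \ref{rl3}. Since $M\in DM_{\cal X}$ and $DM_{\cal X}$ is an ideal in $DM^{eff}_{-}(S)$ (by the Remark following Lemma \ref{donotneed}), the object $Q\oo M$ lies in $DM_{\cal X}$ for every $Q\in DM^{eff}_{-}(S)$. Lemma \ref{rl3} therefore supplies a natural bijection
$$Hom(Q\oo M,P\oo M({\cal X}))\stackrel{\sim}{\sr} Hom(Q\oo M,P)$$
induced by postcomposition with the canonical morphism $P\oo M({\cal X})\sr P$. Composing this with the isomorphism from part (1) yields the desired bijection $Hom(Q,N)\stackrel{\sim}{\sr} Hom(Q\oo M,P)$, and since $e$ is by definition the composition $M\oo N\xr{e_{\cal X}} P\oo M({\cal X})\sr P$, the composite coincides with $f\mapsto e\circ(Id_M\oo f)$.

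The one delicate point is the naturality bookkeeping in part (1): checking that the three composed bijections really assemble into the single map induced by $e_{\cal X}$. This reduces to identifying the structure morphism $\phi:M\oo M({\cal X})\sr M$ from (\ref{mproj}) with $Id_M$ tensored with the natural projection $M({\cal X})\sr\zz$, which is exactly how $\phi$ arises. Once this identification is noted, the composite unwinds directly and both parts of the lemma follow formally.
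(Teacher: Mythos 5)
Your proof is correct and follows essentially the same route as the paper: part (1) uses restrictedness of $N$ to replace $Q$ by $Q\oo M({\cal X})\in DM_{\cal X}$, the isomorphism $M\oo M({\cal X})\cong M$ from Lemma \ref{donotneed}, and the internal Hom property of $(N,e_{\cal X})$ in $DM_{\cal X}$; part (2) composes with the bijection of Lemma \ref{rl3}, using that $Q\oo M$ lies in $DM_{\cal X}$. Your extra care with the naturality bookkeeping only makes explicit what the paper compresses into ``this map is isomorphic to the map\ldots''.
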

\begin{proof}
To prove the first statement we have to show that the map
\begin{eq}
\llabel{eqmap1}
Hom(K,N)\sr Hom(K\oo M, P\oo M({\cal X}))
\end{eq}
is a bijection for any $K$ in $DM^{eff}_{-}(S)$. Since $N$ is
restricted and $M$ is in $DM_{\cal X}$ this map is isomorphic to the
map
$$Hom(K\oo M({\cal X}),N)\sr Hom(K\oo M\oo M({\cal X}), P\oo M({\cal
X}))$$
which is a bijection since $N$ is an internal Hom-object in $DM_{\cal
X}$. 

To prove the second part we have to show that the composition of
(\ref{eqmap1}) with the map
\begin{eq}
\llabel{eqmap2}
Hom(K\oo M, P\oo M({\cal X}))\sr Hom(K\oo M, P)
\end{eq}
is a bijection. This follows from the first part and the fact that
(\ref{eqmap2}) is a bijection by Lemma \ref{rl3}.
\end{proof}
%
%
\begin{lemma}
\llabel{donotneed2} Let $X$ be a smooth scheme over $S$. Then $X$ lies
in $C_{\cal X}$ if and only if the canonical morphism $u:M(X)\sr \zz$
factors through the canonical morphism $v:M({\cal X})\sr\zz$.
\end{lemma}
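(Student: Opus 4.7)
The plan hinges on translating membership in $C_{\cal X}$ into a concrete condition via Lemma \ref{donotneed}: $X \in C_{\cal X}$ (i.e.\ $M(X)\in DM_{\cal X}$) is equivalent to the canonical projection
\[
\phi := Id_{M(X)} \oo v : M(X)\oo M({\cal X}) \sr M(X)
\]
being an isomorphism in $DM^{eff}_{-}(S)$. So the whole lemma reduces to showing ``$\phi$ is an isomorphism $\iff$ $u$ factors through $v$.''

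For the forward direction, assuming $\phi$ is invertible I would set $f := (u\oo Id_{M({\cal X})})\circ \phi^{-1}:M(X)\sr M({\cal X})$ and verify $v\circ f = u$. The key small identity is that both $v\circ (u\oo Id)$ and $u\circ (Id\oo v)$ are the ``double augmentation'' $u\oo v : M(X)\oo M({\cal X})\sr \zz\oo\zz=\zz$, which is a formal consequence of the symmetric monoidal structure (functoriality of $\oo$). From this equality one gets $v\circ f = u\circ (Id\oo v)\circ \phi^{-1} = u\circ \phi\circ \phi^{-1} = u$.

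For the backward direction, given a factorization $u = v\circ f$, I would use the motivic diagonal $\Delta : M(X)\sr M(X)\oo M(X)$ (induced by $X\sr X\times X$) to construct a splitting
\[
h := (Id_{M(X)}\oo f)\circ \Delta : M(X) \sr M(X)\oo M({\cal X}).
\]
The computation $\phi\circ h = (Id\oo v)\circ(Id\oo f)\circ\Delta = (Id\oo u)\circ \Delta = Id_{M(X)}$ uses that this last composite is induced by $X\xrightarrow{\Delta}X\times X\xrightarrow{pr_2}X$, the identity. Consequently $M(X)$ is a direct summand of $M(X)\oo M({\cal X})$.

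To finish, I would argue that $M(X)\oo M({\cal X})$ itself lies in $DM_{\cal X}$: since ${\cal X}$ is embedded, Definition \ref{embed} together with Lemma \ref{froo} gives $M({\cal X})\oo M({\cal X})\cong M({\cal X})$, so $(M(X)\oo M({\cal X}))\oo M({\cal X})\cong M(X)\oo M({\cal X})$ and Lemma \ref{donotneed} applies. Because $DM_{\cal X}$ is a localizing subcategory of the Karoubian category $DM^{eff}_{-}(S)$ (Proposition \ref{summands}) it is closed under direct summands, so $M(X)\in DM_{\cal X}$. The main obstacle, such as it is, is the bookkeeping with tensor factors in the backward direction; the only nontrivial structural input is the embeddedness of ${\cal X}$, used precisely to ensure $M(X)\oo M({\cal X})\in DM_{\cal X}$.
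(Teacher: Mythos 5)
Your proof is correct and follows essentially the same route as the paper: the forward direction uses Lemma \ref{donotneed} together with the identity $v\circ(u\oo Id)=u\circ(Id\oo v)$, and the backward direction exhibits $(Id\oo f)\circ\Delta$ as a section of the projection $M(X)\oo M({\cal X})\sr M(X)$ and concludes by closure of $DM_{\cal X}$ under direct summands. (The only nitpick is immaterial: the map $(Id\oo u)\circ\Delta$ is induced by $pr_1\circ\Delta$ rather than $pr_2\circ\Delta$, but both equal $Id_X$.)
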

\begin{proof}
If $M(X)$ is in $DM_{\cal X}$ then  (\ref{mproj}) is an isomorphism
which immediately implies that $u$ factors through $v$. On the other
hand if $u=v\circ w$ where $w$ is a morphism $M(X)\sr M({\cal X})$
then 
$$(Id\oo w)\circ \Delta:M(X)\sr M(X)\oo M(X)\sr M(X)\oo M({\cal X})$$
is a section of the projection (\ref{mproj}). Therefore, $M(X)$ is a
direct summand of an object of $DM_{\cal X}$ and since $DM_{\cal X}$
is closed under direct summands we conclude that $M(X)$ is in
$DM_{\cal X}$.
\end{proof}
\begin{example}\rm\llabel{donotneed3}
If ${\cal X}=\check{C}(X)$ and $Y$ is any smooth scheme such that
$$Hom(Y,X)\ne \emptyset$$
then Lemma \ref{donotneed2} shows that $M(Y)$ lies in $DM_{\cal
X}$. In particular $M(X)$ lies in $DM_{\cal X}$. More generally for
any $\cal X$ over a perfect field one can deduce from Lemma
\ref{donotneed2} that $M(Y)$ lies in $DM_{\cal X}$ if and only if for
any point $y$ of $Y$ there exists a morphism $Spec({\cal
O}_{Y,y}^h)\sr \zz_{tr}({\cal X}_0)$ such that the composition
$$Spec({\cal
O}_{Y,y}^h)\sr \zz_{tr}({\cal X}_0)\sr \zz$$
equals $1$.
\end{example}
For an embedded $\cal X$ let 
$$\check{\cal X}=\check{C}({\cal X}_0)$$
where ${\cal X}_{0}$ is the zero term of $\cal X$.
\begin{lemma}
\llabel{descr}
There is an isomorphism $M({\cal X})\sr M(\check{\cal X})$.
\end{lemma}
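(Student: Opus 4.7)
The approach is symmetric. I will show that $M(\check{\cal X})\in DM_{\cal X}$ and that $M({\cal X})\in DM_{\check{\cal X}}$; by Lemma \ref{donotneed} these two inclusions translate into the two projections
$$M({\cal X})\oo M(\check{\cal X})\sr M(\check{\cal X})\quad\text{and}\quad M({\cal X})\oo M(\check{\cal X})\sr M({\cal X})$$
(induced respectively by $M({\cal X})\sr\zz$ and $M(\check{\cal X})\sr\zz$) being isomorphisms, so composing one with the inverse of the other yields the required isomorphism $M({\cal X})\cong M(\check{\cal X})$. Note that $\check{\cal X}=\check{C}({\cal X}_0)$ is itself embedded over $S$ by Example \ref{embedex} / Lemma \ref{opyat}, so the machinery developed in this section applies to it as well.

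For the first inclusion, the canonical inclusion of the $0$-th piece of the Moore complex gives a morphism $M({\cal X}_0)\sr M({\cal X})$ whose composite with the augmentation $v:M({\cal X})\sr\zz$ is the canonical morphism $M({\cal X}_0)\sr\zz$. By Lemma \ref{donotneed2} we get $M({\cal X}_0)\in DM_{\cal X}$. Since $DM_{\cal X}$ is a tensor ideal in $DM^{eff}_{-}(S)$ (Remark after Lemma \ref{donotneed}), every tensor power $M({\cal X}_0^{n+1})=M({\cal X}_0)^{\oo(n+1)}$ lies in $DM_{\cal X}$. Because $M(\check{\cal X})$ is built from these term motives by iterated cones (a bounded-above totalization expressible as a homotopy colimit of its skeletal filtration) and $DM_{\cal X}$ is localizing, we conclude $M(\check{\cal X})\in DM_{\cal X}$.

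For the second inclusion, any iterated face-map composition gives a scheme morphism ${\cal X}_n\sr {\cal X}_0$; composing the induced $M({\cal X}_n)\sr M({\cal X}_0)$ with the inclusion of the $0$-th term $M({\cal X}_0)\sr M(\check{\cal X})$ and the augmentation $M(\check{\cal X})\sr\zz$ recovers the canonical map $M({\cal X}_n)\sr\zz$. Lemma \ref{donotneed2} (applied with $\check{\cal X}$ in place of ${\cal X}$) then gives $M({\cal X}_n)\in DM_{\check{\cal X}}$ for every $n$. By the same localizing-closure argument applied to the Moore complex of ${\cal X}$, we obtain $M({\cal X})\in DM_{\check{\cal X}}$, completing step two and hence the proof.

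The main subtlety is the passage from the motives of the individual terms to the motive of the whole simplicial scheme; this is standard, since for a bounded-above simplicial presheaf the Moore complex is a countable homotopy colimit of its finite truncations, each of which is a finite extension by cones of the term motives, and localizing subcategories are closed under triangles and countable sums. Everything else is a direct application of Lemma \ref{donotneed2} plus the tensor-ideal property of $DM_{\cal X}$.
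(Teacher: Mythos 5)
Your proof is correct and follows essentially the same route as the paper: both reduce, via Lemma \ref{donotneed} and the fact that $\check{\cal X}$ is embedded, to the two inclusions $M(\check{\cal X})\in DM_{\cal X}$ and $M({\cal X})\in DM_{\check{\cal X}}$, the first obtained from $M({\cal X}_0)\in DM_{\cal X}$ (Lemma \ref{donotneed2}, since ${\cal X}_0\sr S$ factors through ${\cal X}$) and the second from morphisms ${\cal X}_i\sr{\cal X}_0$ together with Lemma \ref{donotneed2} / Example \ref{donotneed3}. You merely spell out the localizing-closure and tensor-ideal step passing from the term motives to $M({\cal X})$ and $M(\check{\cal X})$, which the paper leaves implicit.
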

\begin{proof}
Let us show that both projections
$$M({\cal X})\oo M(\check{\cal X})\sr M({\cal X})$$
and
$$M({\cal X})\oo M(\check{\cal X})\sr M(\check{\cal X})$$
are isomorphisms. Since both $\cal X$ and $\check{\cal X}$ are embedded it
is sufficient by Lemma \ref{donotneed} to show that one has
$$M({\cal X})\in DM_{\check{\cal X}}$$
and
$$M(\check{\cal X})\in DM_{{\cal X}}$$
The terms of $\cal X$ are smooth schemes ${\cal X}_i$ and for each $i$
we have
$$Hom({\cal X}_i,{\cal X}_0)\ne \emptyset$$
We conclude by Example \ref{donotneed3} that $M({\cal X}_i)$ are in
$DM_{\check{\cal X}}$ and therefore $M({\cal X})$ is in
$DM_{\check{\cal X}}$. To see the second inclusion it is sufficient to
show that $M({\cal X}_0)$ is in $DM({\cal X})$. This follows from
Lemma \ref{donotneed2} since the morphism ${\cal X}_0\sr S$ factors
through the morphism ${\cal X}\sr S$ in the obvious way.
\end{proof}
\begin{remark}\rm
Let $S=Spec(k)$ where $k$ is a field. Recall from \cite{MCnew} that
for $X$ such that $X(k)\ne \emptyset$ the projection $\check{C}(X)\sr
Spec(k)$ is a local equivalence. Since a non-empty smooth scheme over a
field always has a point over a finite separable extension of this
field we conclude from Lemma \ref{descr} that for any embedded $\cal
X$ such that $M({\cal X})\ne 0$ there exists a finite separable field
extension $E/k$ such that the pull-back of $M({\cal X})\sr \zz$ to $E$
is an isomorphism.
\end{remark}
\begin{remark}\rm
\llabel{charzero}
If we consider motives with coefficients in $R$ where $R$ is of
characteristic zero then the pull-back with respect to a finite
separable field is a conservative functor (i.e. it reflects
isomorphisms). Therefore the previous remark implies that for
$S=Spec(k)$ and motives with coefficients in a ring $R$ of
characteristic zero one has
$$M({\cal X})\cong R$$
for any non-empty embedded simplicial scheme $\cal X$. This means that
in the case of motives over a field the theory of this section is
inetresting only if we consider torsion effects.
\end{remark}
\begin{lemma}
\llabel{wheneq}
Let $\cal X$ be an embedded simplicial scheme and $X$ a smooth scheme
over $S$. Assume that the following conditions hold:
\begin{enumerate}
\item $M(X)\in DM_{\cal X}$
\item for any $Y$ such that $M(Y)\in
DM_{\cal X}$ there exists a Nisnevich covering $U\sr X$ of $X$ and a
morphism $M(U)\sr M(Y)$ such that the square
$$
\begin{CD}
M(U) @>>> M(Y)\\
@VVV @VVV\\
\zz @>Id>> \zz
\end{CD}
$$
commutes.
\end{enumerate}
Then $M(\check{C}(X))\cong M({\cal X})$.
\end{lemma}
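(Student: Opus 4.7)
The plan is to invoke Lemma \ref{donotneed} for both embedded simplicial schemes $\cal X$ and $\check{C}(X)$ (the latter embedded by Example \ref{embedex}). That characterization says that an object $N$ of $DM^{eff}_{-}(S)$ lies in $DM_{\cal Y}$ exactly when the natural projection $N \oo M({\cal Y}) \sr N$ is an isomorphism. Hence, to conclude $M({\cal X}) \cong M(\check{C}(X))$ it suffices to verify the two inclusions $M(\check{C}(X)) \in DM_{\cal X}$ and $M({\cal X}) \in DM_{\check{C}(X)}$; these yield the two canonical isomorphisms $M({\cal X}) \cong M({\cal X}) \oo M(\check{C}(X)) \cong M(\check{C}(X))$ from the two projections on the common tensor product.

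For the first inclusion I would use condition (1) together with the remark after Lemma \ref{donotneed} that $DM_{\cal X}$ is a localizing tensor ideal in $DM^{eff}_{-}(S)$. Since $M(X)$ lies in $DM_{\cal X}$, every tensor power $M(\check{C}(X)_n) = M(X)^{\oo(n+1)}$ lies there, and consequently so does the associated total complex $M(\check{C}(X))$, realized as a simplicial homotopy colimit of these terms.

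For the second inclusion I reduce to showing $M({\cal X}_i) \in DM_{\check{C}(X)}$ for each $i$, since $DM_{\check{C}(X)}$ is localizing and $M({\cal X})$ is assembled from its terms. Each $M({\cal X}_i)$ lies in $DM_{\cal X}$ by Lemma \ref{donotneed2}, because its augmentation to $\zz$ factors through $M({\cal X}) \sr \zz$ via the simplicial structure. Applying condition (2) with $Y={\cal X}_i$ produces a Nisnevich cover $U_i \sr X$ and a morphism $\phi_i\colon M(U_i)\sr M({\cal X}_i)$ compatible with the augmentations to $\zz$. Note that $M(U_i)$ itself lies in $DM_{\check{C}(X)}$, since $U_i \sr X$ provides the required factorization in Lemma \ref{donotneed2} (cf.\ Example \ref{donotneed3}).

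The main obstacle is to leverage $\phi_i$ so as to conclude that $M({\cal X}_i) \in DM_{\check{C}(X)}$. The plan is to invoke the pointwise criterion of Example \ref{donotneed3}: it is enough to find, for every point $y\in {\cal X}_i$, a morphism $Spec({\cal O}^h_{{\cal X}_i,y}) \sr \zz_{tr}(X)$ whose composition with $\zz_{tr}(X)\sr \zz$ equals $1$. To produce this, I would apply condition (2) not only to $Y={\cal X}_i$ but to the cofiltered family of étale neighborhoods $V\sr {\cal X}_i$ of $y$ (all of which still have $M(V) \in DM_{\cal X}$), obtain the corresponding covers $U_V \sr X$ and morphisms $\phi_V\colon M(U_V)\sr M(V)$, and pass to the Henselization at $y$. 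The augmentation compatibility of each $\phi_V$, combined with the Nisnevich lifting property of $U_V \sr X$ over Hensel local rings, should supply the required degree-one correspondence, closing the argument via Example \ref{donotneed3} and hence yielding $M({\cal X}) \cong M(\check{C}(X))$.
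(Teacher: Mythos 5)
Your reduction is the same as the paper's: by Lemma \ref{donotneed} it suffices to show $M(\check{C}(X))\in DM_{\cal X}$, which follows from condition (1) since $DM_{\cal X}$ is a localizing ideal, and $M({\cal X})\in DM_{\check{C}(X)}$, which you further reduce to the terms $M({\cal X}_i)$ (the paper reduces to ${\cal X}_0$ alone via Lemma \ref{descr}, but your termwise reduction is harmless). The problem is the step you yourself flag as the main obstacle: deducing $M({\cal X}_i)\in DM_{\check{C}(X)}$ from condition (2). Every morphism that condition (2), as you read it, puts at your disposal goes from a Nisnevich covering $U\sr X$ of $X$ to $M(V)$, i.e.\ away from $X$, whereas the criterion of Example \ref{donotneed3} demands the opposite: a degree-one correspondence $Spec({\cal O}^h_{{\cal X}_i,y})\sr \zz_{tr}(X)$, i.e.\ something mapping towards $X$. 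Running over \'etale neighbourhoods $V$ of $y$ and Henselizing only varies the targets $M(V)$; the Nisnevich lifting property of $U_V\sr X$ lets you lift an already given map $Spec({\cal O}^h_{{\cal X}_i,y})\sr X$ to $U_V$, but it cannot produce such a map, and there is no way to invert $\phi_V:M(U_V)\sr M(V)$. In fact, with this reading condition (2) gives you nothing beyond condition (1): a morphism $M(U)\sr M(Y)$ over $\zz$ with $U\sr X$ a covering only certifies that $M(U)$ lies in $DM_{\cal X}$ and in $DM_{\check{C}(X)}$, both of which you already know.

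The paper's own proof uses condition (2) with the roles of $X$ and $Y$ interchanged (this is evidently the intended hypothesis; the printed statement appears to be transposed): for each finite-type piece $Y$ of ${\cal X}_0$ one has a Nisnevich covering $U\sr Y$ of $Y$ and a morphism $M(U)\sr M(X)$ over $\zz$. Then $M(U)\sr\zz$ factors through $M(X)\sr\zz$ and hence through $M(\check{C}(X))\sr\zz$, so $M(U)\in DM_{\check{C}(X)}$ by Lemma \ref{donotneed2}; since $\check{C}(U)\sr\check{C}(Y)$ is a local equivalence for a Nisnevich covering, one may then replace $Y$ by $U$ and conclude $M(Y)\in DM_{\check{C}(X)}$. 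With the hypothesis used in that direction your argument closes at once (your pointwise criterion via Example \ref{donotneed3} also works, since Hensel local points of ${\cal X}_i$ lift to the covering of ${\cal X}_i$ and map to $X$); as written, however, the final step of your proposal is a genuine gap.
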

\begin{proof}
We need to verify that the projections
$$M({\cal X}\times \check{C}(X))\sr M({\cal X})$$
and
$$M({\cal X}\times \check{C}(X))\sr M(\check{C}(X))$$
are isomorphisms. The second one is an isomorphism by Lemma
\ref{donotneed} since $M(X)\in DM_{\cal X}$ and therefore
$M(\check{C}(X))\in DM_{\cal X}$. To check that the first projection
defines an isomorphism it is sufficient by the same lemma to verify
that $M({\cal X})$ is in $DM_{\check{C}(X)}$. In view of Lemma
\ref{descr} it is sufficient to check that $M({\cal X}_0)$ is in
$DM_{\check{C}(X)}$. Since ${\cal X}_0$ is a disjoint union of smooth
varieties of finite type $Y$ such that $M(Y)$ is in $DM_{\cal X}$ it
remains to check that for such $Y$ one has 
\begin{eq}
\llabel{yisin}
M(Y)\in
DM_{\check{C}(X)}.
\end{eq}
One can easily see (cf. \cite{MCnew}) that for a
Nisnevich covering $U\sr Y$ the corresponding map $\check{C}(U)\sr
\check{C}(Y)$ is a local equivalence. Hence we may assume that $U=Y$
i.e. that there is a morphism $M(Y)\sr M(X)$ over $\zz$. Then the
morphism $M(Y)\sr \zz$ factors throught $M(\check{C}(X))$ and we
conclude by Lemma \ref{donotneed2} that (\ref{yisin}) holds.
\end{proof}
\begin{remark}\rm
One can shows that (at least over a perfect field) the conditions of
Lemma \ref{wheneq} are in fact equivalent to the condition that
$M(\check{C}(X))\cong M({\cal X})$.
\end{remark}
\begin{example}\rm\llabel{inthomex}
In the notations of Example \ref{nonrestr} consider the pair
$(M({\cal X}),e)$ where $e$ is the canonical morphism
$$M({\cal X})\oo M({\cal X})\sr M({\cal X})$$
Since $M({\cal X})$ is the unit of $DM_{\cal X}$ this pair is an
internal Hom-object from $M({\cal X})$ to itself in $DM_{\cal
X}$. However it is not an internal Hom-object from $M({\cal X})$ to
itself in $DM^{eff}_{-}(k)$ since if it were we would have
$$Hom(M, M({\cal X}))=Hom(M\oo M({\cal X}), M({\cal X}))$$
for all $M$ in $DM^{eff}_{-}(k)$ and we know that this equality does
not hold for $M=\zz$.
\end{example}

\begin{example}\rm
Since for the terms ${\cal X}_i$ of $\cal X$ we have 
$$M({\cal X}_i)\in DM_{\cal X}$$
the motive $M({\cal X})$ lies in the localizing subcategory generated
by motives of ${\cal X}_i$. If all ${\cal X}_i$ are smooth projective
varieties this implies by Lemma \ref{spcase} that $M({\cal X})$ lies
in the localizing subcategory generated by restricted
motives. Together with the previous example this shows that the
category of restricted motives is not localizing. Indeed, one can
easily see that it is closed under triangles and direct summands but
not necessarily under infinite direct sums.
\end{example}

\subsection{Coefficients}
All the results of Sections \ref{ss1}-\ref{ss5} can be immediately
reformulated in the $R$-linear context where $R$ is any commutative
ring with unit. Note that the notion of embedded simplicial scheme
depends on the choice of coefficients.

\subsection{Appendix: Internal Hom-objects}\llabel{appendix1}
Recall that for two objects $X, S$ in a tensor category an internal
Hom-object from $X$ to $S$ is a pair $(X',e)$ where $X'$ is an object
and $e:X'\oo X\sr S$ a morphism such that for any $Q$ the map
\begin{eq}
\llabel{dual}
Hom(Q,X')\sr Hom(Q\oo X,S)
\end{eq}
given by $f\mapsto e\circ (f\oo Id_{X})$ is a bijection. 

If $(X',e_X)$ is an internal Hom-object from $X$ to $S$ and $(Y',e_Y)$
an internal Hom-object from $Y$ to $S$ and we have a morphism $f:X\sr
Y$ then the composition
$$Y'\oo X\sr Y'\oo Y\stackrel{e_Y}{\sr} S$$
is the image under (\ref{dual}) of a unque morphism $Y'\sr X'$ which
we denote by $D_{S,e_X,e_Y}f$ or simply $Df$ if $S$, $e_X$ and $e_Y$
are clear from the context. One verifies easily that $D(gf)=DfDg$ if
all the required morphisms are defined. The same is true with respect
to the functoriality of internal Hom-objects in $S$.

The internal Hom-objects are unique up to a canonical isomorphism in
the following sense.
\begin{lemma}
\llabel{uniqueint}
Let $(X',e')$, $(X'',e'')$ be internal Hom-objects from $X$ to
$S$. Then there is a unique isomorphism $\phi:X'\sr X''$ such that
$e'=e''\circ (\phi\oo Id_X)$. 

If $(Y',e_Y)$ is an internal Hom-object from $Y$ to $S$ and $f:Y\sr X$
a morphism then 
$$(D'f:X'\sr Y')=(X'\stackrel{\phi}{\sr}X''\stackrel{D''f}{\sr}Y')$$
where $D'$ is the dual with respect to $e'$ and $e_Y$ and $D''$ the
dual with respect to $e''$ and $e_Y$. A similar property holds for
morphisms $X\sr Y$ and for morphisms in $S$.
\end{lemma}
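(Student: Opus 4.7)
The plan is to first produce the isomorphism $\phi$ together with its inverse directly from the universal properties, then check they are inverses using the uniqueness clause of the universal property, and finally verify the compatibility with duals by a routine tensor-product manipulation.

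Concretely, I would proceed as follows. First, apply the universal property of $(X'',e'')$ to the morphism $e':X'\oo X\sr S$ viewed as an element of $Hom(X'\oo X,S)$; this produces a unique $\phi:X'\sr X''$ satisfying $e'=e''\circ(\phi\oo Id_X)$. Symmetrically, the universal property of $(X',e')$ applied to $e''$ yields a unique $\psi:X''\sr X'$ with $e''=e'\circ(\psi\oo Id_X)$. To see these are mutually inverse, compute
$$e'\circ((\psi\circ\phi)\oo Id_X)=e'\circ(\psi\oo Id_X)\circ(\phi\oo Id_X)=e''\circ(\phi\oo Id_X)=e'=e'\circ(Id_{X'}\oo Id_X),$$
so both $\psi\circ\phi$ and $Id_{X'}$ are sent to $e'$ under the bijection (\ref{dual}) for $(X',e')$ with $Q=X'$; by uniqueness $\psi\circ\phi=Id_{X'}$, and the mirror argument gives $\phi\circ\psi=Id_{X''}$. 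The uniqueness of $\phi$ satisfying $e'=e''\circ(\phi\oo Id_X)$ is part of its construction.

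For the second assertion, unwind definitions: $D'f:X'\sr Y'$ is characterized by $e_Y\circ(D'f\oo Id_Y)=e'\circ(Id_{X'}\oo f)$, and $D''f:X''\sr Y'$ by $e_Y\circ(D''f\oo Id_Y)=e''\circ(Id_{X''}\oo f)$. Using the interchange identity $(Id_{X''}\oo f)\circ(\phi\oo Id_Y)=(\phi\oo Id_X)\circ(Id_{X'}\oo f)$ in the tensor category, I compute
$$e_Y\circ((D''f\circ\phi)\oo Id_Y)=e''\circ(\phi\oo Id_X)\circ(Id_{X'}\oo f)=e'\circ(Id_{X'}\oo f),$$
so by the uniqueness clause of the universal property of $(Y',e_Y)$ applied to the element $e'\circ(Id_{X'}\oo f)$, the composite $D''f\circ\phi$ must equal $D'f$.

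The main (minor) obstacle is bookkeeping with the tensor structure: one has to use the functoriality of $\oo$ in both arguments to rewrite $(\alpha\oo\beta)\circ(\gamma\oo\delta)=(\alpha\circ\gamma)\oo(\beta\circ\delta)$ correctly when juggling $\phi$, $f$, and various identity morphisms. Once this interchange is applied cleanly, everything reduces to uniqueness in the defining bijection. The analogous claims for a morphism $X\sr Y$ (which instead produces an isomorphism between two internal Hom-objects from $Y$ to $S$ and a compatibility going through $\phi$ in the opposite position) and for morphisms in $S$ (inducing maps between internal Hom-objects with different targets) are handled by exactly the same two-step template: first produce the canonical iso from the universal property, then verify compatibility by a single tensor-interchange computation ending in an appeal to uniqueness in (\ref{dual}).
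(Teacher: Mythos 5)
Your proposal is correct: it is the standard uniqueness-by-universal-property argument, producing $\phi$ and $\psi$ from the bijection (\ref{dual}) applied to $e'$ and $e''$, checking $\psi\circ\phi=Id_{X'}$ and $\phi\circ\psi=Id_{X''}$ by injectivity of (\ref{dual}), and deducing $D'f=D''f\circ\phi$ from one tensor-interchange computation plus uniqueness for $(Y',e_Y)$. The paper states Lemma \ref{uniqueint} without proof, treating it as routine, and your write-up supplies exactly the argument that is intended there (the only nitpick being that in the parenthetical about a morphism $X\sr Y$ the objects being compared are still the two internal Hom-objects from $X$ to $S$, with $\phi$ now post-composed, but your template handles that case verbatim).
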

A specification of internal Hom-objects for a tensor category $D$ is a
choice for each pair $(X,S)$ such that there exists an internal
Hom-object from $X$ to $S$ of one such internal Hom-object. We will
always assume below that a specification of internal Hom-objects is
fixed. The distinguished internal Hom-object from $X$ to $S$ with
respect to this specification will be denoted by
$(\uu{Hom}(X,S),ev_{X,S})$.

The construction of $D_Sf$ described above shows that for each $S$, 
\begin{eq}
\llabel{uuhom}
X\mapsto \uu{Hom}(X,S)
\end{eq}
is a contravariant functor from the full subcategory of $D$ which
consists of $X$ such that $\uu{Hom}(X,S)$ exists to $D$. Lemma
\ref{uniqueint} shows that different choices of specifications of
internal Hom-objects lead to isomorphic functors of the form
(\ref{uuhom}). The same holds for the functoriality in $S$.

Consider now the case of a tensor triangulated category $D$ which
satisfies the obvious axioms connecting the tensor and the
triangulated structure. 
We want to investigate how internal Hom-objects behave with respect to
the shift functor and distinguished triangles.

Let $X$, $S$ be a pair of objects of $D$ and $(X', e:X'\oo X\sr S)$ an
internal Hom-object from $X$ to $S$. Consider the pair $(X'[-1],
X'[-1]\oo X[1]\sr S)$ where the morphism is the composition
\begin{eq}\llabel{pair}
X'[-1]\oo X[1]\sr X'[-1][1]\oo X\sr X'\oo X\sr S
\end{eq}
One verifies easily that this pair is an internal Hom-object from
$X[1]$ to $S$. Similar behavior exists with respect to shifts of
$S$. Together with Lemma \ref{uniqueint} this shows that for a given
specification of internal Hom-objects there are canonical isomorphisms
\begin{eq}
\llabel{uuhom1}
\uu{Hom}(X[1],S)\sr \uu{Hom}(X,S)[-1]
\end{eq}
\begin{eq}
\llabel{uuhom2}
\uu{Hom}(X,S[1])\sr \uu{Hom}(X,S)[1]
\end{eq}
\begin{remark}\rm
There is another possibility for the pairing (\ref{pair}) which one
gets by moving $[-1]$ to $X$ instead of $[1]$ to $X'$. It differs from
(\ref{pair}) by sign and also makes $X'[1]$ into an internal Hom-object
from $X[1]$ to $S$. The isomorphisms (\ref{uuhom1}), (\ref{uuhom2})
constructed using different pairings (\ref{pair}) will differ by sign. 
\end{remark}
If $h:Z\sr X[1]$ is a morphism and $\uu{Hom}(Z,S)$ and $\uu{Hom}(X,S)$
exist then the composition of $Dh$ with (\ref{uuhom1}) gives a
morphism $\uu{Hom}(X,S)[-1]\sr \uu{Hom}(Z,S)$ which we will also
denote by $Dh$. This does not lead to any problems since it is always
possible to choose a specification of internal Hom-object such that
the morphisms (\ref{uuhom1}) and (\ref{uuhom2}) are identities. 

Let now
\begin{eq}\llabel{dt}
X\stackrel{f}{\sr}Y\stackrel{g}{\sr}Z\stackrel{h}{\sr}X[1]
\end{eq}
be a distinguished triangle and assume that $\uu{Hom}(X,S)$ and
$\uu{Hom}(Z,S)$ exist.
\begin{theorem}
\llabel{appmain} If $D$ satisfyies May's axiom TC3 (see \cite{MayTT}) then
for any distinguished triangle of the form
$$\uu{Hom}(Z,S)\stackrel{g'}{\sr} Y'\stackrel{f'}{\sr}
\uu{Hom}(X,S)\stackrel{Dh[1]}{\sr} \uu{Hom}(Z,S)[1]$$
there exists a morphism $e_Y:Y'\oo Y\sr S$ such that $(Y',e_Y)$ is an
internal Hom-object from $Y$ to $S$ and one has $g'=Dg$, $f'=Df$.
\end{theorem}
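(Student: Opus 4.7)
The plan is to construct $e_Y$ by extending a morphism across the distinguished triangle on $Y$, and then verify the universal property of an internal Hom-object by a five-lemma argument.

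First I would observe that in the given triangle one has $Dh[1] \circ f' = 0$, since consecutive arrows of a distinguished triangle compose to zero. Translating this vanishing through the adjunction defining $\uu{Hom}(X,S)$ and the isomorphisms (\ref{uuhom1})–(\ref{uuhom2}), it says exactly that the composite
$$Y' \oo Z[-1] \xr{\mathrm{id} \oo h[-1]} Y' \oo X \xr{f' \oo \mathrm{id}_X} \uu{Hom}(X,S) \oo X \xr{e_X} S$$
is zero. Tensoring the triangle $X \sr Y \sr Z \sr X[1]$ with $Y'$ produces (by TC1 of \cite{MayTT}) a distinguished triangle $Y' \oo X \sr Y' \oo Y \sr Y' \oo Z \sr Y' \oo X[1]$, and applying $\mathrm{Hom}(-,S)$ yields a long exact sequence in which the vanishing above is precisely the obstruction to lifting $e_X \circ (f' \oo \mathrm{id}_X)$ to a morphism $e_Y : Y' \oo Y \sr S$ satisfying $e_Y \circ (\mathrm{id} \oo f) = e_X \circ (f' \oo \mathrm{id}_X)$.

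Such a lift is not unique (it is ambiguous by a morphism factoring through $Y' \oo Z$). The second step is to arrange in addition that $e_Y \circ (g' \oo \mathrm{id}_Y) = e_Z \circ (\mathrm{id} \oo g)$, which together with the previous equation are exactly the defining equations of $Df$ and $Dg$. This is the point at which axiom (TC3) of \cite{MayTT} intervenes: it provides the coherence making the $3 \times 3$ grid formed by the two triangles (A) $X \to Y \to Z \to X[1]$ and (B) $\uu{Hom}(Z,S) \to Y' \to \uu{Hom}(X,S) \to \uu{Hom}(Z,S)[1]$ compatible, and so lets us pick a single $e_Y$ satisfying both compatibilities.

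Third, I would verify that $(Y',e_Y)$ represents $Q \mapsto \mathrm{Hom}(Q \oo Y, S)$ by applying $\mathrm{Hom}(Q,-)$ to (B) and $\mathrm{Hom}(Q \oo -, S)$ to (A) and comparing the resulting long exact sequences. Four of the five comparison terms are the adjunction isomorphisms for $\uu{Hom}(X,S)$ and $\uu{Hom}(Z,S)$; the connecting map $(Dh[1])_*$ matches $h^*$ under these by the very definition of $Dh$. The five-lemma then forces $\mathrm{Hom}(Q,Y') \sr \mathrm{Hom}(Q \oo Y, S)$ to be a bijection for every $Q$, and the compatibilities from the first two steps identify $f'$ with $Df$ and $g'$ with $Dg$.

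The main obstacle is step two. In a general triangulated category, a lift of a morphism across a distinguished triangle is only determined up to a morphism from the preceding term, and without extra coherence there is no reason the two naturality conditions for $f'$ and $g'$ can be simultaneously imposed. Unpacking (TC3) into the concrete statement that one extension of $e_X \circ (f' \oo \mathrm{id}_X)$ can be chosen to also implement the $\uu{Hom}(Z,S)$-side pairing is the technical core of the proof; once this is arranged, the representability argument via the five-lemma is routine.
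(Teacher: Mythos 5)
Your overall skeleton agrees with the paper's: find a pairing $e_Y:Y'\oo Y\sr S$ satisfying the two compatibilities $e_Y\circ(Id_{Y'}\oo f)=e_X\circ(f'\oo Id_X)$ and $e_Y\circ(g'\oo Id_Y)=e_Z\circ(Id_{Z'}\oo g)$, then run the five-lemma comparison of $Hom(Q,-)$ applied to the primed triangle against $Hom(Q\oo -,S)$ applied to $X\sr Y\sr Z$, which simultaneously yields representability and the identifications $f'=Df$, $g'=Dg$. Your first two steps (using $Dh[1]\circ f'=0$, translated through the evaluation for $\uu{Hom}(Z,S)$, to kill the obstruction to extending $e_X\circ(f'\oo Id_X)$ over $Y'\oo Y$) are fine and even give a cheap existence statement the paper does not need separately.

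However, there is a genuine gap exactly where you acknowledge it: the simultaneous satisfaction of both compatibilities is asserted, not proved, and your gloss of (TC3) as a ``$3\times 3$ grid'' coherence is not what that axiom says. In May's formulation, (TC3)/(TC3$'$) produces, for the tensor product of the two given distinguished triangles, an object $W$ together with maps $k_1,k_2,k_3$ fitting into specified braid diagrams; in particular one gets a commutative square from $(Y'\oo X)\oplus(Z'\oo Y)$ to $W$ through $Y'\oo Y$ and through $(X'\oo X)\oplus(Z'\oo Z)$. The paper's proof first repackages your two desired identities as the commutativity of the single square with target $S$ and left column $(Y'\oo X)\oplus(Z'\oo Y)\sr (X'\oo X)\oplus(Z'\oo Z)$, bottom row $e_X+e_Z$; after invoking (TC3$'$) it still has to show that $e_X+e_Z$ factors through $k_3+k_1:(X'\oo X)\oplus(Z'\oo Z)\sr W$, and then $e_Y$ is obtained by composing $k_2$ with that factorization. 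This factorization is the real work: one extends $k_3+k_1$ to an exact triangle $(X'\oo Z)[-1]\sr (X'\oo X)\oplus(Z'\oo Z)\sr W\sr X'\oo Z$ (May's Lemma 4.9) and checks that the square formed by $(X'\oo Z)[-1]$, $X'\oo X$, $Z'\oo Z$ and $S$, built from $h$ and the two evaluations, anticommutes -- a verification available from the elementary axioms because it only involves the single morphism $X\sr Z[1]$. None of this is in your proposal; without it, a lift satisfying one compatibility can differ from one satisfying the other by a morphism factoring through $Y'\oo Z$, and nothing you have written rules that out. So the argument as it stands does not establish the theorem; it reduces it to precisely the step the theorem's hypothesis (TC3) is there to handle.
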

\begin{proof}
To simplify the notations set
$$X'=\uu{Hom}(X,S)\,\,\,\,\,e_X=ev_{X,S}$$
$$Z'=\uu{Hom}(Z,S)\,\,\,\,\,e_Z=ev_{Z,S}$$
We want to find $e_Y$ such that for any $Q$ the map
\begin{eq}
\llabel{dual2}
Hom(Q,Y')\sr Hom(Q\oo Y,S)
\end{eq}
given by $f\mapsto e_Y\circ (f\oo Id_{Y})$ is a bijection. Consider
the diagram
$$
\begin{CD}
Hom(Q,Z') @>>> Hom(Q,Y') @>>> Hom(Q,X')\\
@VVV @. @VVV\\
Hom(Q\oo Z,S) @>>> Hom(Q\oo Y, S) @>>> Hom(Q\oo X, S).
\end{CD}
$$
If we can find $e_Y$ such that the corresponding map (\ref{dual2})
subdivides this diagram into two commutative squares then this map
will be a bijection by the Five Lemma. In addition setting $Q=Z'$ and
using the commuuativity of the left squares on $Id_{Z'}$ we get
$g'=Dg$ and setting $Q=Y'$ and using the commutativity of the right
square on $Id_{Y'}$ we get $f'=Df$. It is sufficient therefore to find
$e_Y$ which satisfy the two commutativity conditions.

A simple diagram chase shows that the commutativity of the left square
is equivalent to the commutativity of the square
$$
\begin{CD}
Z'\oo Y @>>> Y'\oo Y\\
@VVV @VVe_YV\\
Z'\oo Z @>e_Z>> S
\end{CD}
$$
and the commutativity of the right square to the commutativity of the
square
$$
\begin{CD}
Y'\oo X @>>> Y'\oo Y\\
@VVV @VVe_YV\\
X'\oo X @>e_X>> S
\end{CD}
$$
Together we may express our condition as the commutativity of the
square
$$
\begin{CD}
(Y'\oo X)\oplus (Z'\oo Y) @>>> Y'\oo Y\\
@VVV @VVe_YV\\
(X'\oo X)\oplus (Z'\oo Z) @>e_X+ e_Z>> S
\end{CD}
$$
Applying Axiom TC3' (\cite{MayTT}) to our triangles we see that there is an
object $W$ which fits into a commutative diagram
$$
\begin{CD}
(Y'\oo X)\oplus (Z'\oo Y) @>>> Y'\oo Y\\
@VVV @VVk_2V\\
(X'\oo X)\oplus (Z'\oo Z) @>k_3+k_1>> W
\end{CD}
$$
It remains to show that $e_X+ e_Z$ factors through $k_3+ k_1$. By
\cite[Lemma 4.9]{MayTT} the lower side of this square extends to an
exact triangle of the form
$$(X'\oo Z)[-1]\sr (X'\oo X)\oplus (Z'\oo Z) \stackrel{k_3+k_1}{\sr} W\sr X'\oo
Z$$
Therefore it is sufficient to show that the diagram
$$
\begin{CD}
(X'\oo Z)[-1] @>>> X'\oo X`\\
@VVV @VVe_XV\\
Z'\oo Z @>e_Z>> S
\end{CD}
$$
anticommute. A diagram of this form can be defined for any morphism of
the form $X\sr Z[1]$ and its anticommutativity follows easily from the
elementary axioms.
\end{proof}
\begin{remark}\rm
\llabel{appmain'}
Applying Theorem \ref{appmain} to the opposite category to $D$ one
concludes that a similar result holds for distinguished triangles in
$S$.
\end{remark} 
Theorem \ref{appmain} together with the preceeding discussion of
internal Hom-objects and the shift functor, implies in particular that
for a given $S$ (resp. given $X$) the subcategory $D_{(-,S)}$
(resp. $D_{(X,-)}$) which consists of all $X$ (resp. all $S$) such
that $\uu{Hom}(X,S)$ exists is a triangulated subcategory.
\begin{proposition}
\llabel{homex}
The functors
$$\uu{Hom}(-,S):D_{(-,S)}\sr D$$
$$\uu{Hom}(X,-):D_{(X,-)}\sr D$$
considered together with the canonical isomorphisms (\ref{uuhom1}),
(\ref{uuhom2}) are triangulated functors.
\end{proposition}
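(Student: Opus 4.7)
The plan is to verify the three defining conditions for $\uu{Hom}(-,S):D_{(-,S)}\sr D$ to be a triangulated functor: additivity on morphisms, compatibility with the shift via a natural isomorphism, and preservation of distinguished triangles. The statement for $\uu{Hom}(X,-)$ then follows by running the same argument in the opposite category, via Remark \ref{appmain'}.

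Additivity of $f\mapsto Df$ is immediate from the definition of $Df$ as the unique preimage under the bijection (\ref{dual}) of $e_X\circ(Id\oo f)$, which depends additively on $f$. Compatibility with the shift functor is supplied by the canonical isomorphism (\ref{uuhom1}) already constructed before Theorem \ref{appmain}; after the remark following (\ref{uuhom2}), we may furthermore normalize our specification of internal Hom-objects so that (\ref{uuhom1}) and (\ref{uuhom2}) are identities, removing any sign ambiguity.

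The substantive content is the triangulation axiom. Given a distinguished triangle $X\xr{f}Y\xr{g}Z\xr{h}X[1]$ with $X,Z\in D_{(-,S)}$, extend the morphism $Dh[1]:\uu{Hom}(X,S)\sr\uu{Hom}(Z,S)[1]$ to a distinguished triangle
$$\uu{Hom}(Z,S)\xr{g'}Y'\xr{f'}\uu{Hom}(X,S)\xr{Dh[1]}\uu{Hom}(Z,S)[1]$$
by TR1/TR2. Theorem \ref{appmain} then supplies a morphism $e_Y:Y'\oo Y\sr S$ making $(Y',e_Y)$ an internal Hom-object from $Y$ to $S$ such that $g'=Dg$ and $f'=Df$ computed relative to $e_Y$. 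In particular $Y$ lies in $D_{(-,S)}$, so the functor is defined on all three vertices. By Lemma \ref{uniqueint} there is a unique isomorphism $\phi:Y'\sr\uu{Hom}(Y,S)$ satisfying $ev_{Y,S}\circ(\phi\oo Id)=e_Y$, and the same lemma transports duals with respect to $e_Y$ into duals with respect to $ev_{Y,S}$. Conjugating the above triangle by $\phi$ thus yields a distinguished triangle whose objects and morphisms coincide with the image of the original triangle under $\uu{Hom}(-,S)$.

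The one subtle point, which is the main obstacle to address, is that the boundary $Dh[1]$ produced by Theorem \ref{appmain} must be identified with the $[1]$-shift of $\uu{Hom}(h,S):\uu{Hom}(X[1],S)\sr\uu{Hom}(Z,S)$, reinterpreted through (\ref{uuhom1}) since the literal source of $\uu{Hom}(h,S)$ is $\uu{Hom}(X[1],S)$ rather than $\uu{Hom}(X,S)[-1]$. But this is exactly the convention for $Dh$ introduced in the paragraph following (\ref{uuhom2}): for $h:Z\sr X[1]$ the symbol $Dh$ already denotes the composition of the literal dual with (\ref{uuhom1}), so the identification is tautological once the convention is unwound. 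All remaining verifications are then formal, and the proposition follows.
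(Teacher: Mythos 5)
Your proof is correct and follows essentially the same route as the paper: the key step in both is to extend $Dh[1]$ to a distinguished triangle, invoke Theorem \ref{appmain} to realize its middle term as an internal Hom-object from $Y$ to $S$ with the structure maps identified as $Dg$ and $Df$, and then use Lemma \ref{uniqueint} to transport this to the chosen specification $\uu{Hom}(Y,S)$, so that the image triangle is isomorphic to a distinguished one. Your additional remarks on additivity and on the $Dh$ convention via (\ref{uuhom1}) are consistent with the paper's preceding discussion.
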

\begin{proof}
It is clearly sufficient to prove the part of the proposition related
to $\uu{Hom}(-,S)$ i.e. to show that this functor takes distinguished
triangles to distinguished triangles. Consider a distinguished
triangle of the form (\ref{dt}) and the resulting triangle
\begin{eq}
\llabel{needd}
\uu{Hom}(Z,S)\stackrel{Dg}{\sr}\uu{Hom}(Y,S)\stackrel{Df}{\sr}\uu{Hom}(X,S)\stackrel{Dh[1]}{\sr}\uu{Hom}(Z,S)[1]
\end{eq}
In view of Theorem \ref{appmain} there exists an internal Hom-object
$(\tilde{Y}',\tilde{e}_Y)$ from $Y$ to $S$ such that the triangle
formed by $\tilde{D}g$, $\tilde{D}f$ and $Dh[1]$ is distinguished. By
Lemma \ref{uniqueint} there is an isomorphism $\tilde{Y}'\sr
\uu{Hom}(Y,S)$ which extends to an isomorphism of triangles. We
conclude that (\ref{needd}) is isomorphic to a distinguished triangle
and therefore is distinguished.
\end{proof}

\begin{thebibliography}{1}

\bibitem{MayTT}
J.~P. May.
\newblock The additivity of traces in triangulated categories.
\newblock {\em Adv. Math.}, 163(1):34--73, 2001.

\bibitem{Nee}
Amnon Neeman.
\newblock Triangulated categories.
\newblock 148, 2001.

\bibitem{Delnotes}
Vladimir Voevodsky.
\newblock Lectures on motivic cohomology 2000/2001 (written by {P}ierre
  {D}eligne).
\newblock {\em www.math.uiuc.edu/K-theory/527}, 2000/2001.

\bibitem{MCnew}
Vladimir Voevodsky.
\newblock On 2-torsion in motivic cohomology.
\newblock {\em www.math.uiuc.edu/K-theory/502}, 2001.

\bibitem{cancellation}
Vladimir Voevodsky.
\newblock Cancellation theorem.
\newblock {\em www.math.uiuc.edu/K-theory/541}, 2002.

\bibitem{zslice}
Vladimir Voevodsky.
\newblock On the zero slice of the sphere spectrum.
\newblock {\em www.math.uiuc.edu/K-theory/612}, 2002.

\bibitem{zl}
Vladimir Voevodsky.
\newblock Motivic cohomology with ${\bf z}/l$-coefficients.
\newblock {\em In preparation}, 2003.

\bibitem{collection}
Vladimir Voevodsky, Eric~M. Friedlander, and Andrei Suslin.
\newblock {\em Cycles, transfers and motivic homology theories}.
\newblock Princeton University Press, 2000.

\end{thebibliography}
\end{document}